\newcommand*\circled[1]{\tikz[baseline=(char.base)]{\node[shape=circle,draw,inner sep=1pt] (char) {\small #1};}}
\newtheorem{theorem}{Theorem}
\newtheorem{corollary}[theorem]{Corollary}
\newtheorem{lemma}[theorem]{Lemma}
\newtheorem{proposition}[theorem]{Proposition}
\theoremstyle{definition}
\newcommand{\C}{{\mathsf C}}
\newcommand{\R}{{\mathbb R}}
\renewcommand{\S}{{\mathbb S}}
\newcommand{\N}{{\mathbb N}}
\renewcommand{\L}{\mathrm L}
\newcommand{\be}[1]{\begin{equation}\label{#1}}
\newcommand{\ee}{\end{equation}}
\renewcommand{\(}{\left(}
\renewcommand{\)}{\right)}
\newcommand{\irdmu}[1]{\int_{\R^d}{#1}\,d\mu}
\newcommand{\nrm}[2]{\|{#1}\|_{\L^{#2}(\R^d)}}
\newcommand{\iRd}[1]{\int_{\R^d}{#1}\,dx}
\newcommand{\scal}[2]{\left\langle{#1},{#2}\right\rangle}
\newcommand{\ird}[1]{\int_{\R^d}{#1}\;dx}
\newcommand{\M}{\mathfrak M}
\newcommand{\iM}[1]{\int_{\M}{#1}\,d\kern1pt v_g}
\newcommand{\irdsph}[3]{\int_0^\infty\kern-5pt\int_{\S^{d-1}}#2\;{#1}^{\kern1pt #3}\,\frac{d{#1}}{{#1}}\;d\omega}
\newcommand{\D}[1]{\mathsf D_\alpha\kern1pt#1}
\newcommand{\sd}{\sigma_d}
\newcommand{\rate}{\rho}
\def\qed{\,\unskip\kern 6pt \penalty 500 \raise -2pt\hbox{\vrule \vbox to8pt{\hrule width 6pt \vfill\hrule}\vrule}\par}
\def\cprime{$'$}
\newcommand{\scaling}{\mu}
\newcommand{\fg}{f}
\newcommand{\ellk}{k}
\newcommand{\kell}{\ell}
\definecolor{darkblue}{rgb}{0.05, .05, .65}
\definecolor{darkgreen}{rgb}{0.1, .65, .1}
\definecolor{darkred}{rgb}{0.8,0,0}
\begin{document}
\title[Weighted fast diffusion I]{Weighted fast diffusion equations (Part I):\\ Sharp asymptotic rates without symmetry and symmetry breaking in Caffarelli-Kohn-Nirenberg inequalities}
\author[M.~Bonforte, J.~Dolbeault, M.~Muratori and B.~Nazaret]{}

\date{\today}

\subjclass{Primary: 35K55, 46E35, 49K30; Secondary: 26D10, 35B06, 49K20, 35J20.}
\keywords{Interpolation, functional inequalities, Caffarelli-Kohn-Nirenberg inequalities, weights, optimal functions, best constants, symmetry, symmetry breaking, semilinear elliptic equations, flows, fast diffusion equation, entropy methods, linearization, spectrum, spectral gap, Hardy-Poincar\'e inequality.}

\email{matteo.bonforte@uam.es}
\email{dolbeaul@ceremade.dauphine.fr}
\email{matteo.muratori@unipv.it}
\email{bruno.nazaret@univ-paris1.fr}

\thanks{$^*$ Corresponding author.}

\maketitle
\thispagestyle{empty}
\vspace*{-0.45cm}

\centerline{\scshape Matteo Bonforte}
\smallskip
{\footnotesize
\centerline{Departamento de Matem\'{a}ticas,}
\centerline{Universidad Aut\'{o}noma de Madrid,}
\centerline{Campus de Cantoblanco, 28049 Madrid, Spain}
}\medskip

\centerline{\scshape Jean Dolbeault $^*$}
\smallskip
{\footnotesize
\centerline{Ceremade, UMR CNRS nr.~7534,}
\centerline{Universit\'e Paris-Dauphine, PSL Research University,}
\centerline{Place de Lattre de Tassigny, 75775 Paris Cedex~16, France}
}\medskip

\centerline{\scshape Matteo Muratori}
\smallskip
{\footnotesize
\centerline{Dipartimento di Matematica \emph{Felice Casorati},}
\centerline{Universit\`a degli Studi di Pavia,}
\centerline{Via A.~Ferrata 5, 27100 Pavia, Italy}
}\medskip

\centerline{\scshape Bruno Nazaret}
\smallskip
{\footnotesize
\centerline{SAMM,}
\centerline{Universit\'e Paris 1,}
\centerline{90, rue de Tolbiac, 75634 Paris Cedex~13, France}
}

\begin{abstract}\vspace*{-0.25cm} In this paper we consider a family of Caffarelli-Kohn-Nirenberg interpolation inequalities (CKN), with two radial power law weights and exponents in a subcritical range. We address the question of \emph{symmetry breaking:} are the optimal functions radially symmetric, or not ? Our intuition comes from a \emph{weighted fast diffusion} (WFD) flow: if symmetry holds, then an explicit \emph{entropy -- entropy production inequality} which governs the \emph{intermediate asymptotics} is indeed equivalent to (CKN), and the self-similar profiles are optimal for (CKN).

We establish an explicit symmetry breaking condition by proving the linear instability of the radial optimal functions for (CKN). Symmetry breaking in (CKN) also has consequences on entropy -- entropy production inequalities and on the intermediate asymptotics for (WFD). Even when no symmetry holds in (CKN), \emph{asymptotic rates of convergence} of the solutions to (WFD) are determined by a weighted Hardy-Poincar\'e inequality which is interpreted as a linearized entropy -- entropy production inequality. All our results rely on the study of the bottom of the spectrum of the linearized diffusion operator around the self-similar profiles, which is equivalent to the linearization of (CKN) around the radial optimal functions, and on variational methods. Consequences for the (WFD) flow will be studied in Part~II of this work.\end{abstract}

\section{Introduction and main results}\label{Sec:Intro}

Let us consider the \emph{fast diffusion equation with weights}
\be{FD}
u_t+|x|^\gamma\,\nabla\cdot\(\,|x|^{-\beta}\,u\,\nabla u^{m-1}\)=0\,,\quad(t,x)\in\R^+\times\R^d\,,
\ee
where $\beta$ and $\gamma$ are two real parameters, and $m\in[m_1,1)$ with
\[
m_1:=\tfrac{2\,d-2-\beta-\gamma}{2\,(d-\gamma)}\,.
\]
Equation~\eqref{FD} admits self-similar solutions
\[
u_\star(t,x)=\(t/\rate\)^{-\,\rate\,(d-\gamma)}\,\mathfrak B_{\beta,\gamma}\big((\rate/t)^\rho\,x\big)\,,\quad\forall\,(t,x)\in\R^+\times\R^d\,,
\]
where $1/\rate=(d-\gamma)\,(m-m_c)$ with $m_c:=\tfrac{d-2-\beta}{d-\gamma}$ and, up to a multiplication by a constant and a scaling,
\[
\mathfrak B_{\beta,\gamma}(x)=\(1+|x|^{2+\beta-\gamma}\)^\frac1{m-1}\quad\forall\,x\in\R^d\,.
\]
Such self-similar solutions are generalizations of \emph{Barenblatt self-similar solutions} which are known to govern the asymptotic behavior of the solutions of~\eqref{FD} as $t\to+\infty$ when $(\beta,\gamma)=(0,0)$. In that case, optimal rates of convergence have been determined in uniform norms or by relative entropy methods in~\cite{MR586735,MR1940370,MR1777035,MR1986060}. However, when $(\beta,\gamma)\neq(0,0)$, the analysis is more delicate because of possible \emph{symmetry breaking} issues.

Assume for a while that \emph{symmetry} holds (this assumption will be made precise below). Then a rate of convergence of the solutions to~\eqref{FD} towards $u_\star$, known in the literature as the problem of \emph{intermediate asymptotics}, is bounded in terms of the \emph{best constant} $\C_{\beta,\gamma,p}$ in the \emph{Caffarelli-Kohn-Nirenberg interpolation inequalities}
\be{CKN}
\nrm w{2p,\gamma}\le\C_{\beta,\gamma,p}\,\nrm{\nabla w}{2,\beta}^\vartheta\,\nrm w{p+1,\gamma}^{1-\vartheta}\quad\forall\,w\in C_0^\infty(\R^d)\,.
\ee
These inequalities have been introduced in~\cite{Caffarelli-Kohn-Nirenberg-84}. Here $C_0^\infty(\R^d)$ denotes the space of smooth functions on $\R^d$ which converge to zero as $|x|\to\infty$, $m$ and $p$ are related by
\[
p=\tfrac1{2\,m-1}\quad\Longleftrightarrow\quad m=\tfrac{p+1}{2\,p}\,,
\]
the parameters $\beta$, $\gamma$ and $p$ are subject to the restrictions
\be{parameters}
d\ge2\,,\quad\gamma-2<\beta<\tfrac{d-2}d\,\gamma\,,\quad\gamma\in(-\infty,d)\,,\quad p\in\(1,p_\star\right]\quad\mbox{with}\quad p_\star:=\tfrac{d-\gamma}{d-2-\beta}\,,
\ee
and the exponent $\vartheta$ is determined by the scaling invariance, \emph{i.e.},
\be{theta}
\vartheta=\tfrac{(d-\gamma)\,(p-1)}{p\,(d+2+\beta-2\,\gamma-p\,(d-2-\beta))}\,.
\ee
The norms involved in~\eqref{CKN} are defined by
\[
\nrm w{q,\gamma}:=\(\iRd{|w|^q\,|x|^{-\gamma}}\)^{1/q}\quad\mbox{and}\quad\nrm wq:=\nrm w{q,0}\,.
\]
We also define the space $\L^{q,\gamma}(\R^d)$ as the space of all measurable functions $w$ such that $\nrm w{q,\gamma}$ is finite. A simple density argument shows that~\eqref{CKN} can be extended with no restriction to the space of the functions $w\in\L^{p+1,\gamma}(\R^d)$ such that $\nabla w\in\L^{2,\beta}(\R^d)$. See Section~\ref{Sec:CKNrange} for further considerations on the functional setting.

Because of the weights, it is not straightforward to decide whether optimality in~\eqref{CKN} is achieved by radial functions, or not. For some values of the parameters there is a competition between the weights which tend to decenter the optimizer and the nonlinearity for which radial functions are in principle preferable. The main result of this paper is that weights win over the nonlinearity for certain values of $\beta$ and $\gamma$, hence proving a \emph{symmetry breaking} result that can be precisely characterized as follows. Let us consider the subset $C_{0,\mathrm{rad}}^\infty(\R^d)$ of radial functions in~$C_0^\infty(\R^d)$ and the reduced interpolation inequalities
\be{CKNrad}
\nrm w{2p,\gamma}\le\C_{\beta,\gamma,p}^\star\,\nrm{\nabla w}{2,\beta}^\vartheta\,\nrm w{p+1,\gamma}^{1-\vartheta}\quad\forall\,w\in C_{0,\mathrm{rad}}^\infty(\R^d)\,.
\ee
Since $\C_{\beta,\gamma,p}$ is the best constant in~\eqref{CKN} \emph{without symmetry assumption}, the symmetry breaking issue is the question of knowing whether equality (symmetry case) holds in the inequality $\C_{\beta,\gamma,p}\ge\C_{\beta,\gamma,p}^\star$, or not (symmetry breaking case). As we shall see later, the equality case in~\eqref{CKNrad} is achieved by
\[
w_\star(x)=\mathfrak B_{\beta,\gamma}^{m-1/2}(x)=\(1+|x|^{2+\beta-\gamma}\)^\frac1{p-1}\quad\forall\,x\in\R^d\,,
\]
which provides us with an explicit expression of $\C_{\beta,\gamma,p}^\star$: see Appendix~\ref{Appendix:Mass}.

The limit case $p=p_\star$, that is, $\vartheta=1$ and $\beta=d-2-(d-\gamma)/p$, corresponds to the critical case in~\eqref{CKN} and the symmetry breaking issue has been fully solved in~\cite{DEL2015}. In particular, $\beta=\gamma-2$ can be achieved only in the limit as $p=p_\star=1$ in which~\eqref{CKN} degenerates into a Hardy type inequality for which $\C_{\gamma-2,\gamma,1}=\C_{\gamma-2,\gamma,1}^\star$, but admits no minimizers with gradient in $\L^{2,\gamma-2}(\R^d)$. The other threshold case $\beta=\tfrac{d-2}d\,\gamma$ is also covered by our results, except when $d=2$, in which case one has to assume that $\beta<0$ and $p\in(1,p_\star)$ with $p_\star=+\infty$. To avoid lengthy statements, we will ignore it in the rest of this paper, but necessary adaptations are straightforward.

\medskip The equality case in~\eqref{CKNrad} is achieved not only by $w_\star$ but also by $w=u_\star^{m-1/2}(t,\cdot)$, for any $t>0$, because~the inequality is homogenous and scale invariant. This is the first relation between the evolution equation~\eqref{FD} and the inequality~\eqref{CKN}. Now let us come back to the question of the \emph{intermediate asymptotics}. At a formal level, we observe that a solution to~\eqref{FD} with nonnegative initial datum $u_0\in\L^1(\R^d,|x|^{-\gamma}\,dx)$ is such that
\[
\frac d{dt}\int_{\R^d}u\,\frac{dx}{|x|^\gamma}=0\,,
\]
which suggests to introduce the time-dependent rescaling
\be{Eqn:TDRS}
u(t,x)=R^{\gamma-d}\,v\((2+\beta-\gamma)^{-1}\,\log R,\frac xR\)
\ee
with $R=R(t)$ defined by
\[
\frac{dR}{dt}=(2+\beta-\gamma)\,R^{(m-1)(\gamma-d)-(2+\beta-\gamma)+1}\,,\quad R(0)=1\,.
\]
This ordinary differential equation can be solved explicitly and we obtain that
\[
R(t)=\(1+\tfrac{2+\beta-\gamma}\rate\,t\)^\rate
\]
with $1/\rate=(1-m)\,(\gamma-d)+2+\beta-\gamma=(d-\gamma)\,(m-m_c)$. The equation for $v$ is of Fokker-Planck type and takes the form
\be{Eqn:FD-FP}
v_t+|x|^\gamma\,\nabla\cdot\Big[\,|x|^{-\beta}\,v\,\nabla\big(v^{m-1}-|x|^{2+\beta-\gamma}\big)\Big]=0
\ee
with initial condition $v(t=0,\cdot)=u_0$. Barenblatt type stationary solutions are given by
\[
\mathfrak B(x)=\(C_M+|x|^{2+\beta-\gamma}\)^\frac1{m-1}
\]
where $C_M$ is uniquely determined by the condition
\[
\int_{\R^d}\mathfrak B\;\frac{dx}{|x|^\gamma}=M:=\int_{\R^d}u_0\;\frac{dx}{|x|^\gamma}\,.
\]
Since the mass can be fixed arbitrarily using the scaling properties of~\eqref{FD} and unless it is explicitly specified, we make the choice that $M=M_\star$ is such that $C_{M_\star}=1$. See Appendix~\ref{Appendix:Mass} for an expression of $M_\star$. To emphasize the dependence of~$\mathfrak B$ in the parameters $\beta$ and $\gamma$, we shall write it $\mathfrak B_{\beta,\gamma}$ consistently with our previous notations.

When symmetry holds so that $\C_{\beta,\gamma,p}=\C_{\beta,\gamma,p}^\star$, Inequality~\eqref{CKN} can be written as an \emph{entropy -- entropy production} inequality
\be{Ineq:E-EP}
\tfrac{1-m}m\,(2+\beta-\gamma)^2\,\mathcal F[v]\le\mathcal I[v]\,,
\ee
and equality is achieved by $\mathfrak B_{\beta,\gamma}$. Here the \emph{free energy} (which is sometimes called \emph{generalized relative entropy} in the literature) and the \emph{relative Fisher information} are defined respectively by
\[
\mathcal F[v]:=\frac1{m-1}\int_{\R^d}\(v^m-\mathfrak B_{\beta,\gamma}^m-m\,\mathfrak B_{\beta,\gamma}^{m-1}\,(v-\mathfrak B_{\beta,\gamma})\)\,\frac{dx}{|x|^\gamma}
\]
and
\[
\mathcal I[v]:=\int_{\R^d}v\left|\,\nabla v^{m-1}-\nabla\mathfrak B_{\beta,\gamma}^{m-1}\right|^2\,\frac{dx}{|x|^\beta}\,.
\]
The equivalence of~\eqref{CKN} and~\eqref{Ineq:E-EP} will be detailed in Section~\ref{Sec:NSIIneqEP}. However, we do not claim that $\tfrac{1-m}m\,(2+\beta-\gamma)^2$ is the optimal constant in the entropy -- entropy production inequality, and this is in general not the case: we refer to Section~\ref{Sec:Conclusion} for a discussion of this issue.

By evolving the free energy along the flow and differentiating with respect to $t$, we obtain
\[
\frac d{dt}\mathcal F[v(t,\cdot)]=-\,\frac m{1-m}\,\mathcal I[v(t,\cdot)]\,,
\]
which provides us with a first result.
\begin{proposition}\label{Prop:GlobalRate} Assume that the parameters satisfy~\eqref{parameters}, let $m=\frac{p+1}{2\,p}$ and consider a solution to~\eqref{FD} with nonnegative initial datum $u_0\in\L^{1,\gamma}(\R^d)$ such that $\nrm{u_0^m}{1,\gamma}$ and $\int_{\R^d}u_0\,|x|^{2+\beta-2\gamma}\,dx$ are finite. Then the function $v$ given in terms of $u$ by~\eqref{Eqn:TDRS} solves~\eqref{Eqn:FD-FP} and we have that
\be{EntropyDecay}
\mathcal F[v(t,\cdot)]\le\mathcal F[u_0]\,e^{-(2+\beta-\gamma)^2t}\quad\forall\,t\ge0
\ee
if one of the following two conditions is satisfied:
\begin{enumerate}
\item [(i)] either $u_0$ is a.e.~radially symmetric,
\item [(ii)] or symmetry holds in~\eqref{CKN}.
\end{enumerate}\end{proposition}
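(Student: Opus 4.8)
The plan is to prove the two cases via the same entropy mechanism, the only difference being why the entropy--entropy production inequality~\eqref{Ineq:E-EP} is available. First I would record the formal identity already stated in the excerpt, namely that along the rescaled flow~\eqref{Eqn:FD-FP} one has $\frac d{dt}\mathcal F[v(t,\cdot)]=-\frac m{1-m}\,\mathcal I[v(t,\cdot)]$; combining this with~\eqref{Ineq:E-EP} in the form $\mathcal I[v]\ge\frac{1-m}m\,(2+\beta-\gamma)^2\,\mathcal F[v]$ gives the differential inequality $\frac d{dt}\mathcal F[v(t,\cdot)]\le-(2+\beta-\gamma)^2\,\mathcal F[v(t,\cdot)]$, and Gr\"onwall's lemma then yields~\eqref{EntropyDecay}. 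So the content of the proposition reduces to (a) justifying that these manipulations are legitimate for the class of initial data considered, and (b) establishing that~\eqref{Ineq:E-EP} holds under either hypothesis (i) or (ii).

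For part (b) under hypothesis (ii), the equivalence of~\eqref{CKN} with~\eqref{Ineq:E-EP} is precisely what is announced for Section~\ref{Sec:NSIIneqEP}, so I would simply invoke it: if symmetry holds in~\eqref{CKN}, i.e. $\C_{\beta,\gamma,p}=\C_{\beta,\gamma,p}^\star$, then the sharp constant in~\eqref{CKN} translates into the constant $\tfrac{1-m}m\,(2+\beta-\gamma)^2$ in~\eqref{Ineq:E-EP} after the change of unknown relating $v$ to a test function $w=v^{m-1/2}$ (equivalently $w^{2p}=v$, using $m=\frac{p+1}{2p}$, $p=\frac1{2m-1}$), the free energy becoming (up to the scaling factor) the deficit $\|w\|_{2p,\gamma}^{2p}$-type quantity and the Fisher information the gradient term. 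Under hypothesis (i), $u_0$ is radial, hence by uniqueness the solution $v(t,\cdot)$ stays radial for all $t>0$, and on the cone of radial functions the relevant constant is $\C_{\beta,\gamma,p}^\star$, for which equality in~\eqref{CKNrad} is attained by $w_\star=\mathfrak B_{\beta,\gamma}^{m-1/2}$; thus the radial version of~\eqref{Ineq:E-EP} holds with the same constant, which is all that is needed since every term in the inequality only sees the radial function $v(t,\cdot)$. The preservation of radial symmetry along~\eqref{FD} follows because the equation commutes with the rotation group $O(d)$ (the weights $|x|^\gamma$, $|x|^{-\beta}$ are radial), so applying a rotation to $u_0$ gives another solution with the same datum; combined with uniqueness in the appropriate class this forces rotational invariance of $u(t,\cdot)$.

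For part (a), the analytic justification, I would proceed by a standard approximation/regularization argument rather than differentiating directly: approximate $u_0$ by smooth, strictly positive, compactly perturbed data for which the solution is smooth and all integrations by parts are licit, derive~\eqref{EntropyDecay} at that level, and then pass to the limit using lower semicontinuity of $\mathcal F$ and the finiteness assumptions $\nrm{u_0^m}{1,\gamma}<\infty$ and $\int_{\R^d}u_0\,|x|^{2+\beta-2\gamma}\,dx<\infty$, which guarantee $\mathcal F[u_0]<\infty$ and control the boundary terms at $t=0$. Alternatively one may argue that $t\mapsto\mathcal F[v(t,\cdot)]$ is absolutely continuous with a.e. derivative given by the above identity; this is where the regularity theory for~\eqref{FD} enters. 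I expect this point --- making the entropy dissipation identity and the integrations by parts rigorous for the stated (fairly weak) class of initial data --- to be the main technical obstacle, whereas the algebraic equivalence and the Gr\"onwall step are essentially bookkeeping once Section~\ref{Sec:NSIIneqEP} is in place. Since the excerpt explicitly defers consequences for the flow to Part~II, it is reasonable to present part (a) at a formal level here, flagging that the rigorous existence/uniqueness/regularity framework underpinning it is treated separately.
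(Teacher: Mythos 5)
Your proposal is correct and matches the paper's (implicit) approach: the paper obtains Proposition~\ref{Prop:GlobalRate} directly from the entropy dissipation identity $\frac{d}{dt}\mathcal F[v(t,\cdot)]=-\frac m{1-m}\,\mathcal I[v(t,\cdot)]$ combined with~\eqref{Ineq:E-EP} (which follows from~\eqref{CKN} in case (ii) via Proposition~\ref{Prop:EquivEP-NSII}, and from~\eqref{CKNrad} restricted to the rotation-invariant class in case (i)), followed by Gr\"onwall. You also correctly note that the rigorous flow-theoretic justification is deferred to~\cite{BDMN2016b}, which is precisely what the paper does.
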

The condition under which symmetry holds has been established after this paper was submitted, in~\cite{DELM2015}, and will be commented below. Under the conditions (i) or~(ii), Inequalities~\eqref{Ineq:E-EP} and~\eqref{EntropyDecay} are actually equivalent as can be shown by computing $\frac d{dt}\mathcal F[v(t,\cdot)]$ at $t=0$. On the other hand,~\eqref{EntropyDecay} gives a strong control on the large time asymptotics. In terms of the rescaled function~$v$, as in~\cite{MR1940370}, one can prove using an adapted Csisz\'ar-Kullback-Pinsker inequality that
\[
\nrm{v(t,\cdot)-\mathfrak B_{\beta,\gamma}}{1,\gamma}^2\le\mathsf C_{\rm CKP}(M)\,\mathcal F[u_0]\,e^{-\,(2+\beta-\gamma)^2\,t}\quad\forall\,t\ge0
\]
for some explicit constant $\mathsf C_{\rm CKP}(M)$. If we replace $v(t,\cdot)$ by $\mu^{\gamma-d}(t)\,v(t,\cdot/\mu(t))$, as in~\cite{1004,MR3103175,1751-8121-48-6-065206,1501}, one can even obtain a faster convergence rate for some function~$\mu$ such that $\lim_{t\to+\infty}\mu(t)=1$. After undoing the change of variables~\eqref{Eqn:TDRS}, this provides us with an algebraic rate of convergence in original variables. Proofs and more details can be found in~\cite{BDMN2016b}.

\medskip The results of Proposition~\ref{Prop:GlobalRate} hold only for radial solutions to~\eqref{FD}, or under the assumption that symmetry holds, \emph{i.e.}, if $\C_{\beta,\gamma,p}=\C_{\beta,\gamma,p}^\star$, and our first main result is a negative result, in the sense that it gives us a sufficient condition on $\beta$ and $\gamma$ under which symmetry breaking holds, \emph{i.e.}, for which $\C_{\beta,\gamma,p}>\C_{\beta,\gamma,p}^\star$.

Let us define
\[
\beta_{\rm FS}(\gamma):=d-2-\sqrt{(d-\gamma)^2-4\,(d-1)}\,.
\]
\begin{theorem}\label{Thm:SymmetryBreaking} Assume that the parameters satisfy~\eqref{parameters}. Then \emph{symmetry breaking} holds in~\eqref{CKN} if
\[
\gamma<0\quad\mbox{and}\quad\beta_{\rm FS}(\gamma)<\beta<\frac{d-2}d\,\gamma\,.
\]
\end{theorem}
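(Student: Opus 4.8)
The plan is to prove symmetry breaking by establishing \emph{linear instability} of the radial optimizer $w_\star$, that is, by exhibiting a non-radial perturbation of $w_\star$ that strictly decreases the quotient whose minimization over $C_0^\infty(\R^d)$ defines $\C_{\beta,\gamma,p}$ (equivalently, increases the ratio $\|\nabla w\|_{2,\beta}^\vartheta\,\|w\|_{p+1,\gamma}^{1-\vartheta}/\|w\|_{2p,\gamma}$, of which $w_\star$ is the radial maximizer). First I would pass to the equivalent entropy--entropy production formulation and linearize: writing $v=\mathfrak B_{\beta,\gamma}(1+\eps\,f)$ and expanding $\mathcal F[v]$ and $\mathcal I[v]$ to second order in $\eps$, the inequality~\eqref{Ineq:E-EP} is governed at leading order by a weighted Hardy--Poincar\'e type inequality for a self-adjoint operator $\mathcal L$ acting on a weighted $\L^2$ space with measure built from $\mathfrak B_{\beta,\gamma}$ on $\R^d$. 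The statement $\C_{\beta,\gamma,p}=\C_{\beta,\gamma,p}^\star$ forces the spectral gap of $\mathcal L$ to be at least $\tfrac{1-m}m\,(2+\beta-\gamma)^2$ (times the relevant normalization); symmetry breaking follows if one produces a test function with Rayleigh quotient strictly below this threshold.

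The natural test functions are \emph{anisotropic} ones of the form $f(x)=|x|\,\Phi(|x|)\,Y(x/|x|)$ with $Y$ a first spherical harmonic (eigenvalue $d-1$ on $\S^{d-1}$, after accounting for the weight in the angular part — here the relevant angular operator still has first eigenvalue $d-1$ because the weights are radial), reducing the problem to a one-dimensional Sturm--Liouville problem in the radial variable $s=|x|$. After the usual change of variable $s\mapsto \mathsf r = s^{2+\beta-\gamma}$ (or the analogous substitution that turns $\mathfrak B_{\beta,\gamma}$ into $(1+\mathsf r)^{1/(m-1)}$ with the correct dimensional exponent), the radial problem becomes a classical Hardy--Poincar\'e inequality on $\R^+$ with a Barenblatt-type weight, whose spectrum is explicit: its eigenvalues and eigenfunctions (hypergeometric / polynomial) are known from the $(\beta,\gamma)=(0,0)$ theory in~\cite{MR1940370,MR1777035} adapted to the effective dimension $n=\tfrac{2(d-\beta)}{2+\beta-\gamma}$ or similar. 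The lowest eigenvalue in the sector with one unit of angular momentum is then an explicit rational function of $d$, $\beta$, $\gamma$, $p$, and one compares it against $\tfrac{1-m}m\,(2+\beta-\gamma)^2$.

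Concretely, I expect the comparison to reduce to a quadratic inequality in $\beta$: the first angular eigenvalue drops below the symmetry threshold precisely when $(d-\gamma)^2-4(d-1)>0$ together with $\beta>d-2-\sqrt{(d-\gamma)^2-4(d-1)}=\beta_{\rm FS}(\gamma)$, while the constraint $\beta<\tfrac{d-2}d\,\gamma$ is exactly what keeps us in the admissible parameter range~\eqref{parameters} and simultaneously forces $\gamma<0$ (one checks that $\beta_{\rm FS}(\gamma)<\tfrac{d-2}d\,\gamma$ has no solution for $\gamma\ge0$). So the skeleton is: (1) linearize (CKN)$=$(E--EP) around $w_\star\leftrightarrow\mathfrak B_{\beta,\gamma}$ and identify the quadratic form and the threshold; (2) restrict to the first spherical-harmonic sector and diagonalize the resulting 1D operator explicitly via a Barenblatt change of variables; (3) pick the ground state of that sector as the instability direction and show its Rayleigh quotient equals an explicit expression; (4) solve the resulting algebraic inequality to get the stated region.

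The main obstacle is step~(2)--(3): one must make the linearization rigorous at the level of the functional inequality — i.e.\ justify that a genuine decrease of the quadratic form translates into a strict decrease of the full nonlinear (CKN) quotient for small $\eps$ (which requires the second variation to be the leading term, hence that $w_\star$ is a \emph{critical} point, and control of the remainder), and one must correctly track all the weight-dependent constants so that the Sturm--Liouville eigenvalue problem is the \emph{right} one. In particular the subtle point is that the angular part of the linearized operator, despite the presence of $|x|^{-\beta}$ and $|x|^{-\gamma}$, still has first nonzero eigenvalue $d-1$ on the sphere; once that is pinned down, the rest is the explicit spectral computation and the elementary algebra yielding $\beta_{\rm FS}(\gamma)$.
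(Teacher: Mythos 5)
Your proposal follows essentially the same route as the paper's variational proof (Section~\ref{Sec:SBVar} together with the spectral computation in Lemma~\ref{Lem:SpectrumResults}): change of radial variable to normalize the Barenblatt weight, second variation of the entropy--entropy production functional around the radial profile, restriction to the first spherical-harmonic sector where the explicit ground state $f_{0,1}(s)=s^\eta$ gives $\Lambda_{0,1}=2\,\alpha^2\,\delta\,\eta$, and comparison with the threshold $\Lambda_\star=2\,\alpha^2\,\delta$ to obtain $\eta<1$, i.e.\ $\alpha^2>\tfrac{d-1}{n-1}$, which is exactly the Felli--Schneider region. Two small notational slips do not affect the argument (the substitution is $s=r^\alpha$ with $\alpha=1+\tfrac{\beta-\gamma}{2}$, not $s=r^{2+\beta-\gamma}$, and $n=\tfrac{2(d-\gamma)}{2+\beta-\gamma}$, not $\tfrac{2(d-\beta)}{2+\beta-\gamma}$), and the paper also records a second proof via the nonlinear flow (Section~\ref{Sec:LinearizationSetting}), contradicting the global decay rate of Proposition~\ref{Prop:GlobalRate} with the sharp asymptotic rate of Theorem~\ref{Thm:Asymptotic rates} when $\Lambda_{0,1}<\Lambda_\star$, which you do not discuss.
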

The symmetry breaking region is shown in Fig.~\ref{Fig:F1}. In~\cite{DELM2015}, it has been proved that symmetry holds if $0\le\gamma\le d$, or $\gamma<0$ and $\beta\le\beta_{\rm FS}(\gamma)$, which is the complementary domain, in the range of admissible parameters, of the symmetry breaking region.
\begin{figure}[ht]
\begin{center}
\includegraphics[width=10cm]{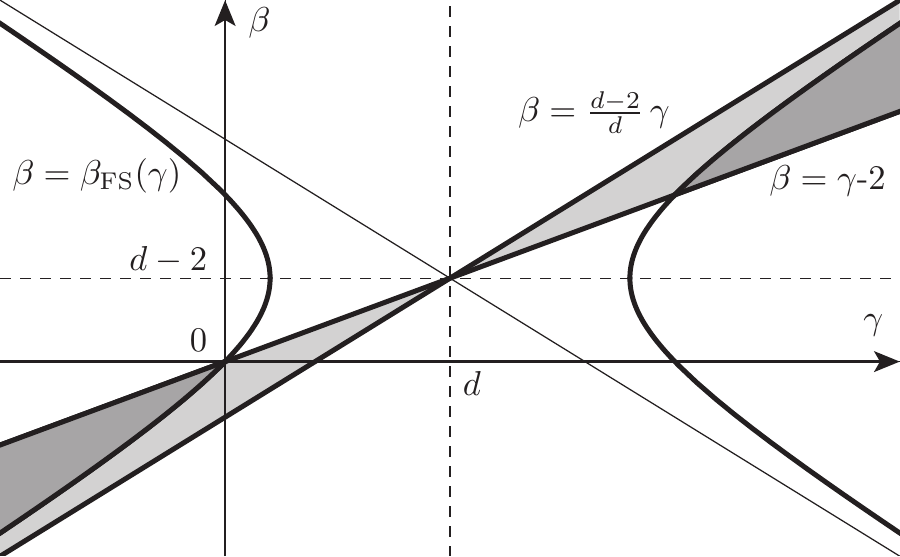}
\caption{\label{Fig:F1} We consider the admissible range for the $(\beta,\gamma)$ parameters. The grey area is the area of validity of~\eqref{CKN} and it is given by $\gamma-2<\beta<\tfrac{d-2}d\,\gamma$ if $\gamma<d$ and $\tfrac{d-2}d\,\gamma<\beta<\gamma-2$ if $\gamma>d$: the cones corresponding to $\gamma<d$ and $\gamma>d$ are in one-to-one correspondance by an \emph{inversion symmetry}: see details in Section~\ref{Sec:CKNrange}. Notice that the case $\gamma>d$ has been excluded in~\eqref{parameters} in order to simplify the statements. The hyperbola defined by the \emph{Felli \& Schneider curve} determines a region (dark grey area) of symmetry breaking which is valid for any $p\in(1,p_\star)$, and independent of $p$. However, since $p_\star$ depends on $\beta$ and $\gamma$, this induces an additional restriction on the admissible range of $(\beta,\gamma)$, which depends on $p$: see Figs.~\ref{Fig:F2} and~\ref{Fig:F3}. Here we consider the special case $d=5$.}
\end{center}
\end{figure}
It is a remarkable fact that $\beta_{\rm FS}$ is independent of $p$. Here `FS' stands for V.~Felli and M.~Schneider, who first gave the  sharp condition of linear instability for symmetry breaking in the critical case $p= p_\star$: see Section~\ref{Sec:CKNrange} for details. Notice that the condition $p\le p_\star$ can be seen as a restriction on the admissible set of parameters $(\beta,\gamma)$. For any given $p\in(1,\frac d{d-2}]$, it means
\[
\beta\ge d-2-\frac{d-\gamma}p\,.
\]
As $p\to\frac d{d-2}$, the admissible cone corresponding to $\gamma<d$ shrinks to the simple half-line given by $\beta=\frac{d-2}d\,\gamma$, while the whole range of~\eqref{parameters} is covered in the limit as $p\to1$. See Figs.~\ref{Fig:F2} and~\ref{Fig:F3}.

The proof of Theorem~\ref{Thm:SymmetryBreaking} relies on the linear instability of optimal radial functions, among non-radial functions. Our purpose is \emph{not} to study the symmetry issue in the general Caffarelli-Kohn-Nirenberg interpolation inequalities, which is a difficult problem that has to be dealt with using specific methods: see~\cite{DELM2015}. However, even without taking the symmetry issue in~\eqref{CKN} into account, we can study the asymptotic rates of convergence. Since Barenblatt type profiles attract all solutions  at least when $m\in[m_1,1)$, the linearization around these profiles is again enough to get an answer. This is the purpose of our second main result. Better results concerning the basin of attraction of the Barenblatt profiles are stated in part II of this paper: see~\cite{BDMN2016b}. For technical reasons and in order to simplify the proof, we shall assume that the initial datum $u_0$ is sandwiched between two Barenblatt profiles: \emph{there are two positive constants $C_1$ and $C_2$ such that}
\be{Ineq:sandwiched}
\(C_1+|x|^{2+\beta-\gamma}\)^\frac1{m-1}\le u_0(x)\le\(C_2+|x|^{2+\beta-\gamma}\)^\frac1{m-1}\quad\forall\,x\in\R^d\,.
\ee
Let us define
\[
\sigma(\gamma,p)=-\,\frac{\big(d-\gamma+p\,(d+2-\gamma)\big)\,\big(d-\gamma-p\,(d-2+\gamma)\big)}{2\,p\,(p+1)\,(d-\gamma)}
\]
and consider the unique positive solution to
\be{Eqn:eq-eta}
\eta\,(\eta+n-2)=\frac{d-1}{\alpha^2}.
\ee
where $n$ and $\alpha$ are defined by
\be{Eqn:alpha-n}
\alpha=1+\frac{\beta-\gamma}2\quad\mbox{and}\quad n=2\,\frac{d-\gamma}{2+\beta-\gamma}\,.
\ee
 See Figs.~\ref{Fig:F2} and~\ref{Fig:F3} for an illustration of the curve $\beta=\sigma(\gamma,p)$. Hence $\eta$ is given by
\be{Eqn:eta}
\eta=\sqrt{\tfrac{d-1}{\alpha^2}+\big(\tfrac{n-2}2\big)^2}-\tfrac{n-2}2=\tfrac2{2+\beta-\gamma}\sqrt{d-1+\big(\tfrac{d-2-\beta}2\big)^2}-\tfrac{d-2-\beta}{2+\beta-\gamma}\,.
\ee
\begin{theorem}\label{Thm:Asymptotic rates} {\rm~\cite{BDMN2016b}} Assume that~\eqref{parameters} and~\eqref{Ineq:sandwiched} hold. With $m=\frac{p+1}{2\,p}$ and $\eta$ given by~\eqref{Eqn:eta}, if $v$ solves~\eqref{Eqn:FD-FP}, then there exists a positive constant $C$ such that
\[\label{AsymptoticEntropyDecay}
\mathcal F[v(t,\cdot)]\le C\,e^{-\,2\,\lambda\,t}\quad\forall\,t\ge0
\]
where $\lambda$ is given by
\[
\lambda=\left\{\begin{array}{rl}
\frac{2+\beta-\gamma}{2\,p}\,\big[d-\gamma-p\,(d+\gamma-2\,\beta-4)\big]\quad&\mbox{if}\quad\beta\le\sigma(\gamma,p)\,,\\[6pt]
\tfrac12\,(2+\beta-\gamma)^2\,\eta\quad&\mbox{if}\quad\beta\ge\sigma(\gamma,p)\,.
\end{array}
\right.
\]
\end{theorem}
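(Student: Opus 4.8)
The plan is to reduce the convergence statement for the weighted Fokker--Planck equation~\eqref{Eqn:FD-FP} to a linearized spectral analysis, exactly in the spirit of the classical relative-entropy methods (\cite{MR1940370,MR1986060}) adapted to the weighted setting. First I would work with the relative entropy $\mathcal F[v(t,\cdot)]$ and its production $\mathcal I[v(t,\cdot)]$ along the flow, using $\frac{d}{dt}\mathcal F[v]=-\frac{m}{1-m}\mathcal I[v]$. The strategy is to establish a \emph{linearized entropy -- entropy production} inequality $\Lambda\,\mathcal F[v]\le \mathcal I[v]$ valid \emph{asymptotically}, i.e.\ for $v$ close to $\mathfrak B_{\beta,\gamma}$, with the sharp constant $\Lambda$ dictated by the bottom of the spectrum of the linearized operator; Gr\"onwall's lemma then yields $\mathcal F[v(t,\cdot)]\le C\,e^{-\Lambda\,t}$ with $\Lambda=\frac{m}{1-m}(2+\beta-\gamma)^{-?}\cdots$, matching $2\lambda$ after the right normalization.

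The core computation is the linearization. Write $v=\mathfrak B_{\beta,\gamma}\big(1+\varepsilon\,\mathfrak B_{\beta,\gamma}^{1-m}\,f\big)$ (or a similar ansatz) and expand $\mathcal F$ and $\mathcal I$ to second order in $\varepsilon$. The quadratic forms that appear are, up to constants, $\int |f|^2\,\mathfrak B_{\beta,\gamma}^{2-m}\,|x|^{-\gamma}\,dx$ for the entropy and $\int |\nabla f|^2\,\mathfrak B_{\beta,\gamma}\,|x|^{-\beta}\,dx$ for the Fisher information, so the relevant eigenvalue problem is a \emph{weighted Hardy--Poincar\'e inequality}
\[
\int_{\R^d}|\nabla f|^2\,\mathfrak B_{\beta,\gamma}\,|x|^{-\beta}\,dx\ge\Lambda\int_{\R^d}|f|^2\,\mathfrak B_{\beta,\gamma}^{2-m}\,|x|^{-\gamma}\,dx
\]
on the subspace orthogonal to the zero modes generated by mass conservation and scaling invariance. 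To compute $\Lambda$ I would pass to the variables $(\mathsf r,\omega)$ suggested by~\eqref{Eqn:alpha-n}: the change of variables $s=|x|^{\alpha}$ with $\alpha=1+\frac{\beta-\gamma}{2}$ turns the radial part of the operator into one of effective dimension $n=2\,\frac{d-\gamma}{2+\beta-\gamma}$, reducing the problem to a one-dimensional Sturm--Liouville operator with a Barenblatt weight — whose spectrum is explicitly known (Gegenbauer/Jacobi-type eigenfunctions, as in the $(\beta,\gamma)=(0,0)$ case). Decomposing $f$ into spherical harmonics of degree $\ell$ on $\S^{d-1}$, the angular contribution adds $\ell(\ell+d-2)/|x|^2$, which after the same substitution becomes $\frac{d-1}{\alpha^2}$ times the relevant factor when $\ell=1$; equation~\eqref{Eqn:eq-eta} for $\eta$ is precisely the condition identifying when the lowest non-radial ($\ell=1$) eigenvalue $\frac12(2+\beta-\gamma)^2\eta$ undercuts the lowest admissible radial eigenvalue $\frac{2+\beta-\gamma}{2p}[d-\gamma-p(d+\gamma-2\beta-4)]$. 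The threshold between the two regimes is exactly the curve $\beta=\sigma(\gamma,p)$, and $\Lambda=2\lambda$ is the minimum of the two.

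There are two places where care is needed. The first, which is mostly bookkeeping, is identifying and excluding the correct zero modes: $f\equiv\mathrm{const}$ (mass) and $f$ proportional to $|x|^{2+\beta-\gamma}-\text{const}$ times a factor (the scaling/dilation mode), so that the constrained infimum really gives the claimed $\Lambda$; one must check these are orthogonal to the admissible perturbations coming from the constraints $\int v\,|x|^{-\gamma}\,dx=M$ and the choice of $C_M$. The second, and genuinely the main obstacle, is turning the \emph{linearized} inequality into a statement about the \emph{nonlinear} flow with a uniform constant $C$. This is where hypothesis~\eqref{Ineq:sandwiched} is essential: the sandwiching between two Barenblatt profiles is preserved by the comparison principle for~\eqref{FD}, giving global-in-time uniform bounds $0<c_1\le u/\mathfrak B_{\beta,\gamma}\le c_2$, hence relative compactness and the fact that $v(t,\cdot)\to\mathfrak B_{\beta,\gamma}$; then one shows that $\mathcal I[v]/\mathcal F[v]\to\Lambda$ (or $\ge\Lambda-o(1)$) along the flow by combining the linearized inequality on the limit with a standard ``$\varepsilon$ of room'' argument controlling the higher-order remainders via the uniform bounds and regularity estimates. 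Feeding $\mathcal I\ge(\Lambda-\varepsilon)\mathcal F$ back into $\frac{d}{dt}\mathcal F=-\frac{m}{1-m}\mathcal I$ and integrating gives the stated decay, with the constant $C$ absorbing the transient regime $t\in[0,T_\varepsilon]$. Since this theorem is attributed to the companion paper~\cite{BDMN2016b}, I expect the present paper to carry out only the spectral computation (the weighted Hardy--Poincar\'e inequality and the value of $\Lambda$), deferring the nonlinear-to-linear passage to Part~II. \qed
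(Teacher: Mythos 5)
Your proposal follows essentially the same strategy that the paper outlines heuristically in Sections~\ref{Sec:LinearizationFlow} and~\ref{Sec:SpectrumResults}: linearize~\eqref{Eqn:FD-FP} around the Barenblatt profile, pass to the $(s,\omega)$ variables with $s=|x|^\alpha$ so that the operator has effective dimension $n$, decompose into spherical harmonics, identify the spectral gap $\Lambda$ of the weighted Hardy--Poincar\'e inequality (Proposition~\ref{Prop:SpectralGap}, Lemma~\ref{Lem:SpectrumResults}), set $\lambda=(1-m)\,\Lambda$, and close the argument by the nonlinear-to-linear passage powered by~\eqref{Ineq:sandwiched}. You also correctly note that the present paper carries out only the spectral computation and explicitly defers the rigorous justification (uniform relative convergence, regularity) to~\cite{BDMN2016b}, which is exactly what the text around the theorem says.

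There is, however, a concrete error in your treatment of the ``zero modes.'' Only the constant function is a zero mode of $\mathcal L$ (Lemma~\ref{Lem:SpectrumResults}\,(i): the kernel is generated by the constants, reflecting mass conservation, and the orthogonality constraint $\int f\,|x|^{n-d}\,d\mu_{\delta+1}=0$ removes it). The scaling/dilation mode $f_{1,0}(s)=s^2-\tfrac{n}{2\delta-n}$ is \emph{not} a zero mode: it is the lowest \emph{positive} radial eigenfunction, with eigenvalue $\Lambda_{1,0}=2\,\alpha^2\,(2\,\delta-n)$, and it is precisely one of the two competitors in $\Lambda=\min\{\Lambda_{1,0},\Lambda_{0,1}\}$. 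Indeed, the first case in the theorem, $\lambda=\tfrac{2+\beta-\gamma}{2p}\big[d-\gamma-p\,(d+\gamma-2\beta-4)\big]$ for $\beta\le\sigma(\gamma,p)$, is exactly $(1-m)\,\Lambda_{1,0}$; the scaling mode governs the decay rate there. (Excluding it would require the ``best-matching'' rescaling $\mu(t)$ mentioned after Proposition~\ref{Prop:GlobalRate}, which the theorem does not use.) Relatedly, equation~\eqref{Eqn:eq-eta} does not itself express the threshold between the two regimes: it determines $\eta$, hence $\Lambda_{0,1}=2\,\alpha^2\,\delta\,\eta$, and the crossover $\Lambda_{1,0}=\Lambda_{0,1}$ is a separate computation giving the curve $\beta=\sigma(\gamma,p)$, equivalently $\alpha^2=\tfrac{(d-1)\,\delta^2}{n\,(2\delta-n)\,(\delta-1)}$ as in~\eqref{Eqn:SpectralGap}.
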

The constant $C$ depends non-explicitly on $u_0$. The condition~\eqref{Ineq:sandwiched} may look rather restrictive, but it is probably not, because it is expected that the condition is satisfied, for some positive $t$, by any solution with initial datum as in~Proposition~\ref{Prop:GlobalRate}. At least this is what occurs when $(\beta,\gamma)=(0,0)$: see for instance~\cite{BV-JFA,BBDGV}. Since our purpose is only to investigate the large time behavior of the solutions, establishing such a regularization result is definitely out of the scope of the present paper.

Theorem~\ref{Thm:Asymptotic rates} is proved in~\cite{BDMN2016b}, with less restrictions on $m$. A detailed study of the regularity of the solutions to~\eqref{FD} is indeed required. In this paper, we will give a first proof of Theorem~\ref{Thm:SymmetryBreaking} which relies on the result of Theorem~\ref{Thm:Asymptotic rates} because this completes the picture of the relation of the evolution problem~\eqref{FD} with the question of symmetry breaking in~\eqref{CKN} and emphasizes the role of the spectrum of the linearized problems. For completeness, in Section~\ref{Sec:SBVar}, we will also give a purely variational proof of Theorem~\ref{Thm:SymmetryBreaking} which does not use the results of Theorem~\ref{Thm:Asymptotic rates}.

In practice, $\lambda$ is the optimal asymptotic rate and it is given by the relation
\[
\lambda=(1-m)\,\Lambda
\]
where $\Lambda$ is the optimal constant in a spectral gap inequality, or \emph{Hardy-Poincar\'e inequality}, which goes as follows. With $\delta=1/(1-m)$, let us define $d\mu_\delta:=\mu_\delta(x)\,dx$,
\[
\mu_\delta(x):=\frac1{(1+|x|^2)^\delta}
\]
and $\D v:=\(\alpha\,\frac{\partial v}{\partial s},\frac1{s}\,\nabla_{\kern-2pt\omega} v\)$, where $\alpha=1+(\beta-\gamma)/2$. Here~$s=|x|$ and $\omega=x/|x|$ are spherical coordinates. We shall also use the parameter $n=(d-\gamma)/\alpha$ as in~\eqref{Eqn:alpha-n}.
\begin{proposition}\label{Prop:SpectralGap} Let $d\ge2$, $\alpha\in(0,+\infty)$, $n>d$ and $\delta\ge n$. Then the \emph{Hardy-Poincar\'e inequality}
\be{Hardy-Poincare1}
\irdmu{|\D f|^2\,|x|^{n-d}}_\delta\ge\Lambda\irdmu{|f|^2\,|x|^{n-d}}_{\delta+1}
\ee
holds for any $f\in\L^2(\R^d,|x|^{n-d}\,d\mu_{\delta+1})$ such that $\irdmu{f\,|x|^{n-d}}_{\delta+1}=0$, with an optimal constant $\Lambda$ given by
\be{Eqn:SpectralGap}
\Lambda=\left\{\begin{array}{rl}
2\,\alpha^2\,(2\,\delta-n)\quad&\mbox{if}\quad0<\alpha^2\le\frac{(d-1)\,\delta^2}{n\,(2\,\delta-n)\,(\delta-1)}\,,\\[6pt]
2\,\alpha^2\,\delta\,\eta\quad&\mbox{if}\quad\alpha^2>\frac{(d-1)\,\delta^2}{n\,(2\,\delta-n)\,(\delta-1)}\,,
\end{array}
\right.
\ee
where $\eta$ is given by~\eqref{Eqn:eta}.\end{proposition}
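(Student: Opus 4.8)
The plan is to prove Proposition~\ref{Prop:SpectralGap} by reducing the problem to a one-dimensional Sturm--Liouville / spherical-harmonics decomposition. First I would pass to spherical coordinates $x=s\,\omega$, $s=|x|$, $\omega\in\S^{d-1}$, so that the measure $|x|^{n-d}\,d\mu_\delta$ becomes $s^{n-1}\,(1+s^2)^{-\delta}\,ds\,d\omega$ (note the exponent $n-d$ in $|x|^{n-d}$ combines with $s^{d-1}$ from $dx$ to give $s^{n-1}$, which is why $n$ rather than $d$ governs the radial behaviour). The operator $\mathsf D_\alpha f=(\alpha\,\partial_s f,\;s^{-1}\nabla_\omega f)$ then has squared modulus $|\mathsf D_\alpha f|^2=\alpha^2\,|\partial_s f|^2+s^{-2}\,|\nabla_\omega f|^2$. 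Expanding $f$ in spherical harmonics, $f(s,\omega)=\sum_{k\ge0} f_k(s)\,Y_k(\omega)$ with $-\Delta_\omega Y_k=k(k+d-2)\,Y_k$, decouples both the numerator and denominator of~\eqref{Hardy-Poincare1} into a sum over $k$, with the orthogonality constraint $\int f\,|x|^{n-d}\,d\mu_{\delta+1}=0$ forcing $f_0$ to have zero average against $s^{n-1}(1+s^2)^{-(\delta+1)}\,ds$ while $f_k$ for $k\ge1$ is automatically orthogonal to the constants. Thus $\Lambda$ is the infimum over $k\ge0$ of the best constant $\Lambda_k$ in the one-dimensional inequality
\[
\int_0^\infty\Big(\alpha^2\,|f_k'|^2+\tfrac{k(k+d-2)}{s^2}\,|f_k|^2\Big)\,s^{n-1}\,(1+s^2)^{-\delta}\,ds\;\ge\;\Lambda_k\int_0^\infty |f_k|^2\,s^{n-1}\,(1+s^2)^{-\delta-1}\,ds,
\]
subject to the mean-zero condition only when $k=0$.

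Next I would identify each $\Lambda_k$ explicitly. The $k=0$ problem is a pure (radial) Hardy--Poincaré inequality: the change of variable $t=s^2$ (or direct manipulation) turns it into a Jacobi-type eigenvalue problem on $(0,\infty)$ whose first nonzero eigenfunction is affine in $|x|^2$, more precisely $f_0(s)=a+b\,s^2$ with $a,b$ chosen to meet the orthogonality constraint; plugging this test function in and optimizing gives $\Lambda_0=2\,\alpha^2\,(2\delta-n)$. For $k\ge1$ one checks, using a super-solution / Persson-type argument or by exhibiting the ground state of the associated Schrödinger-type operator, that the bottom of the spectrum is attained at an explicit power-like eigenfunction $f_k(s)=s^{\eta_k}$ near $s=0$ matched to the decaying behaviour at infinity, and the resulting eigenvalue is increasing in $k$; hence $\min_{k\ge1}\Lambda_k=\Lambda_1$, and the computation of $\Lambda_1$ produces $2\,\alpha^2\,\delta\,\eta$ where $\eta$ solves $\eta(\eta+n-2)=(d-1)/\alpha^2$, i.e. the quantity in~\eqref{Eqn:eta}. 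The final constant is $\Lambda=\min\{\Lambda_0,\Lambda_1\}$, and comparing the two expressions $2\alpha^2(2\delta-n)$ and $2\alpha^2\delta\eta$ yields exactly the threshold $\alpha^2\lessgtr\frac{(d-1)\,\delta^2}{n\,(2\delta-n)\,(\delta-1)}$: indeed $\Lambda_0\le\Lambda_1$ is equivalent, via the defining equation for $\eta$, to that inequality on $\alpha^2$, after using $n>d$ and $\delta\ge n$ to guarantee $2\delta-n>0$ and $\delta-1>0$ so the threshold is well defined and positive.

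The two steps I expect to require the most care are: (a) justifying that the infimum over the spherical-harmonic sectors is actually attained and equals the smaller of $\Lambda_0,\Lambda_1$ — one must rule out the possibility that the infimum is approached only in the limit $k\to\infty$ or escapes to the bottom of the essential spectrum; here the hypotheses $n>d$ and $\delta\ge n$ are exactly what make the relevant weights integrable and the embedding $\L^2(d\mu_{\delta+1})\hookleftarrow$ gradient-space compact enough (or, at minimum, make the essential spectrum lie strictly above $\Lambda_0$), so a careful Persson-lemma computation of $\inf\sigma_{\mathrm{ess}}$ is needed; and (b) the monotonicity $k\mapsto\Lambda_k$ for $k\ge1$, which follows because the potential term $k(k+d-2)/s^2$ is monotone in $k$, so the Rayleigh quotient is pointwise nondecreasing — but one should check that this does not interact badly with the constraint (it does not, since for $k\ge1$ there is no constraint). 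Everything else is an explicit ODE computation: solving $\eta(\eta+n-2)=(d-1)/\alpha^2$, verifying that $f_0=a+b|x|^2$ and $f_1=|x|^{\eta}\,\omega_j$ (one component) are genuine minimizers by the Euler--Lagrange equation, and matching constants. I would present the radial case $k=0$ first as a warm-up, then the general sector, then assemble the dichotomy.
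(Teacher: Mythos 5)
Your proposal follows essentially the same route as the paper: decompose via spherical harmonics in the measure $s^{n-1}(1+s^2)^{-\delta}\,ds\,d\omega$, identify the first nonzero radial eigenvalue $2\alpha^2(2\delta-n)$ with eigenfunction of the form $a+b\,s^2$, identify the ground state of the $k=1$ sector as $s^\eta$ with $\eta(\eta+n-2)=(d-1)/\alpha^2$ giving eigenvalue $2\alpha^2\delta\,\eta$, use monotonicity of the angular potential to discard $k\ge 2$, control the essential spectrum via Persson's criterion, and compare the two candidate eigenvalues to obtain the threshold on $\alpha^2$. This is precisely the content of Lemma~\ref{Lem:SpectrumResults} and the proof of Proposition~\ref{Prop:SpectralGap} in the paper. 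One minor remark: you describe $s^{\eta}$ as a local model ``near $s=0$ matched to the decaying behaviour at infinity'', but in fact it is an exact solution of the one-dimensional Euler--Lagrange equation on all of $(0,\infty)$ — the weight $(1+s^2)^{-\delta}$ conspires so that plugging in $f(s)=s^\eta$ satisfies the ODE identically once $\eta(\eta+n-2)=(d-1)/\alpha^2$ and $\Lambda=2\alpha^2\delta\eta$ hold — so no matching argument is needed.
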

The two possible values of $\Lambda$ simply mean that $\Lambda=\min\{\Lambda_{0,1},\Lambda_{1,0}\}$ where $\Lambda_{1,0}=2\,\alpha^2\,(2\,\delta-n)$ and $\Lambda_{0,1}=2\,\alpha^2\,\delta\,\eta$ are respectively the lowest positive eigenvalue among radial functions, and the lowest positive eigenvalue among non-radial functions. The case $\alpha^2=\frac{(d-1)\,\delta^2}{n\,(2\,\delta-n)\,(\delta-1)}$ corresponds to the threshold case for which $\Lambda_{1,0}=\Lambda_{0,1}$ and is reflected in Theorem~\ref{Thm:Asymptotic rates} by the case $\beta=\sigma(\gamma,p)$. The condition $\delta\ge n$ comes from the sub-criticality condition $p\le p_\star$ and can be replaced by $\delta>0$ in Proposition~\ref{Prop:SpectralGap}. Under appropriate conditions on $u_0$, Theorem~\ref{Thm:Asymptotic rates} can also be extended to the strict super-critical range corresponding to $m<m_1$: see \cite{BDMN2016b}.

\medskip The outline of the paper goes as follows. Section~\ref{Sec:CKN} is devoted to Caffarelli-Kohn-Nirenberg inequalities~\eqref{CKN} from a variational point of view. Considerations on the weighted fast diffusion equation and the free energy estimates have been collected in Section~\ref{Sec:WFD}. These considerations are formal but constitute the guideline of our strategy. The proofs of the results involving the nonlinear flow are given in~\cite{BDMN2016b}. Spectral results on the linearized evolution operator and the associated quadratic forms are established in Section~\ref{Sec:SpectrumResults}, which also contains the proof of our main results. The key technical result is Lemma~\ref{Lem:SpectrumResults}, but some additional spectral results have been collected in Appendix~\ref{Sec:SpectrumAditional}.

\medskip Let us conclude this introduction by a brief overview of the literature. Concerning fundamental results on Caffarelli-Kohn-Nirenberg inequalities, we primarily refer to~\cite{Catrina-Wang-01}. When $p=p_\star$, the symmetry condition found by V.~Felli and M.~Schneider in~\cite{Felli-Schneider-03} has recently been proved to be optimal in~\cite{DEL2015}. The interested reader is invited to refer to this last paper for a rather complete list of earlier results. Still concerning symmetry breaking issues in Caffarelli-Kohn-Nirenberg inequalities, one can quote~\cite{DDFT,DETT,1406}, and~\cite{1005} for some associated existence results. We have no specific references for~\eqref{CKN} with general parameters $\beta$, $\gamma$ and $p$ apart the original paper~\cite{Caffarelli-Kohn-Nirenberg-84} by L.~Caffarelli R.~Kohn and L.~Nirenberg, and to our knowledge, no symmetry breaking result was known so far for~\eqref{CKN} apart when $p=p_\star$. However, when $\beta=0$, one has to quote~\cite{DMN2015} in which existence and symmetry for $\gamma>0$ but small is established, and~\cite{DELM2015} for recent, complete symmetry results. Notice in particular that~\eqref{FD} is considered in~\cite{DMN2015} together with entropy methods when $\beta=0$, and plays an important role in the heuristics of the method used in~\cite{DELM2015}.

References concerning global existence for equations related with~\eqref{FD}, large time behavior of the solutions and intermediate asymptotics will be listed in~\cite{BDMN2016b}. Here let us only mention some papers dealing with \emph{linearizations} of non-weighted fast diffusion equations. Roughly speaking, we can distinguish three categories of papers: 1) some early results based mostly on comparison methods: see~\cite{MR0097235,MR586735,MR956056,MR1491842} and references therein; 2) a linearization motivated by the gradient flow structure of the fast diffusion equations:~\cite{DKM,MR1982656,DenzMc05,MR2246356}; 3) entropy based approaches:~\cite{MR2320246,BBDGV,BDGV-PNAS,BGV,MR2344717,MR1853037,MR1901093,MR1793019,MR1777035,MR2355628,MR1986060,1004,MR3103175,1751-8121-48-6-065206,1501,MR1974458}. This last angle of attack is the one of this paper and many more references can be found in the above mentioned papers. The reader interested in a historical perspective on entropy methods can refer to~\cite{MR1842428} and to the review article~\cite{MR2065020}. Let us quote~\cite{MR2510011} for related issues in probability theory.

Beyond the interest for the understanding of qualitative issues like symmetry breaking in functional inequalities, Equation~\eqref{FD} is motivated by some applications which are listed in~\cite{BDMN2016b}. From a more abstract point of view, let us emphasize that power law weights and power nonlinearities are typical of the asymptotic analysis of some limiting regimes -- either at large scales or close to eventual singularities -- which are obtained by rescalings and blow-up methods. Thus the study of~\eqref{FD} and~\eqref{CKN} can be considered as an important theoretical issue for a large class of applied problems.

\section{Caffarelli-Kohn-Nirenberg inequalities: a variational point of view}\label{Sec:CKN}

\subsection{Range of the parameters and symmetry breaking region}\label{Sec:CKNrange}

In their simplest form, the Caffarelli-Kohn-Nirenberg inequalities
\be{CKN-DEL}
\(\ird{\frac{|v|^q}{|x|^{bq}}}\)^{2/q}\le\,\mathcal C_{a,b}\ird{\frac{|\nabla v|^2}{|x|^{2a}}}\quad\forall\,v\in\mathcal D_{a,b}
\ee
have been established in~\cite{Caffarelli-Kohn-Nirenberg-84}, under the conditions that $a\le b\le a+1$ if $d\ge3$, $a<b\le a+1$ if $d=2$, $a+1/2<b\le a+1$ if $d=1$, and $a<a_c$ where
\[
a_c:=\frac{d-2}2\,.
\]
The exponent
\[\label{exponent-relationabp}
q=\frac{2\,d}{d-2+2\,(b-a)}
\]
is determined by the invariance of the inequality under scalings. Here $\mathcal C_{a,b}$ denotes the optimal constant in~\eqref{CKN-DEL} and the space $\mathcal D_{a,b}$ defined by
\[
\mathcal D_{a,b}:=\Big\{\,w\in\L^q\(\R^d,|x|^{-bq}\,dx\)\,:\,|x|^{-a}\,|\nabla w|\in\L^2(\R^d,dx)\Big\}
\]
is obtained as the completion of $C_c^\infty(\R^d)$, the space of smooth functions in $\R^d$ with compact support, with respect to the norm defined by $\|w\|^2=\|\,|x|^{-b}\,w\,\|_q^2+\|\,|x|^{-a}\,\nabla w\,\|_2^2$. Inequality~\eqref{CKN} holds also for $a>a_c$: in this case $\mathcal D_{a,b}$ has to be defined as the completion with respect to $\|\cdot\|$ of the space $C_c^\infty(\R^d\setminus\{0\}):=\big\{w\in C_c^\infty(\R^d)\,:\,\mbox{supp}(w)\subset\R^d\setminus\{0\}\big\}$. The two cases, $a>a_c$ and $a<a_c$, are related by the property of \emph{modified inversion symmetry} that can be found in~\cite[Theorem~1.4,~(ii)]{Catrina-Wang-01}. In the setting of Inequality~\eqref{CKN}, this property becomes a simpler \emph{inversion symmetry} property that will be discussed below. We refer to~\cite{Catrina-Wang-01} for many important properties of~\eqref{CKN-DEL}, to~\cite{Felli-Schneider-03} and~\cite{DEL2015} respectively for a symmetry breaking condition and for symmetry results.

Inequality~\eqref{CKN} enters in the framework of the Caffarelli-Kohn-Nirenberg inequalities introduced in~\cite{Caffarelli-Kohn-Nirenberg-84}. However, these inequalities are easy to justify directly from~\eqref{CKN-DEL}. Indeed, by a H\"older interpolation, we see that
\[
\nrm w{2p,\gamma}\le\nrm w{2p_\star,\gamma}^\vartheta\,\nrm w{p+1,\gamma}^{1-\vartheta}
\]
with $p_\star=\tfrac{d-\gamma}{d-2-\beta}$, with same expression as in~\eqref{parameters} and $\vartheta=\tfrac{(d-\gamma)\,(p-1)}{p\,(d+2+\beta-2\,\gamma-p\,(d-2-\beta))}$ as in~\eqref{theta}. With the choice
\[
2\,p_\star=q\,,\quad a=\frac\beta2\quad\mbox{and}\quad b=\frac\gamma q\,,
\]
the reader is invited to check that~\eqref{CKN} follows from~\eqref{CKN-DEL} with an optimal constant $\C_{\beta,\gamma,p}\le\mathcal C_{a,b}^\vartheta$. The range $a<b<a+1<a_c+1$ is transformed into the range
\[
\gamma-2<\beta<\tfrac{d-2}d\,\gamma\quad\mbox{and}\quad\gamma<d
\]
and guarantees that~\eqref{CKN} holds true for any $p\in(1,p_\star)$ and $d\ge2$. Finally let us notice that $\vartheta=1$ if $p=p_\star$, in which case~\eqref{CKN} is actually reduced to~\eqref{CKN-DEL}.

The \emph{inversion symmetry} property of~\eqref{CKN} can be stated as follows. The admissible range of parameters corresponding to $\gamma<d$ and the one corresponding to $\gamma>d$ are in one-to-one correspondance. With
\[
\widetilde\beta=2\,(d-2)-\beta\quad\mbox{and}\quad\widetilde\gamma=2\,d-\gamma\,,
\]
the inequality for a function $w$ in the range $\gamma-2\le\beta\le\frac{d-2}d\,\gamma<d-2$ is equivalent to the inequality for
\[
\widetilde w(x)=w\big(\tfrac x{|x|^2}\big)\quad\forall\,x\in\R^d\setminus\{0\}
\]
in the range $d-2<\frac{d-2}d\,\widetilde\gamma\le\widetilde\beta\le\widetilde\gamma-2$ because
\begin{multline*}
\C_{\beta,\gamma,p}\,\nrm{\nabla\widetilde w}{2,\widetilde\beta}^\vartheta\,\nrm{\widetilde w}{p+1,\widetilde\gamma}^{1-\vartheta}=\C_{\beta,\gamma,p}\,\nrm{\nabla w}{2,\beta}^\vartheta\,\nrm w{p+1,\gamma}^{1-\vartheta}\\
\ge\nrm w{2p,\gamma}=\nrm{\widetilde w}{2p,\widetilde\gamma}\,.
\end{multline*}
Since $\vartheta=\vartheta(\beta,\gamma)$ as defined in~\eqref{parameters} is such that
\[
\vartheta(\beta,\gamma)=\vartheta(\widetilde\beta,\widetilde\gamma)\,,
\]
we conclude that
\[
\C_{\beta,\gamma,p}=\C_{\widetilde\beta,\widetilde\gamma,p}\,.
\]
Hence, in this paper, as it is usual in the study of Caffarelli-Kohn-Nirenberg inequalities, we consider only the cases $a<a_c$ and $\gamma<d$. 

As noticed in the introduction, a remarkable property is the fact that the symmetry breaking condition for~\eqref{CKN} written in Theorem~\ref{Thm:SymmetryBreaking} amounts in terms of the parameters $\alpha$ and $n$ defined in~\eqref{Eqn:alpha-n} to
\be{FS}
\alpha>\sqrt{\frac{d-1}{n-1}}\,,
\ee
which does not depend on $p$ and coincides with the sharp symmetry breaking condition of V.~Felli \& M.~Schneider for~\eqref{CKN-DEL}, as found in~\cite{Felli-Schneider-03,DEL2015}. In terms of the parameters of~\eqref{CKN-DEL}, this condition is usually stated as
\[
a<0\quad\mbox{and}\quad b<b_{\,\rm FS}(a):=\frac{d\,(a_c-a)}{2\sqrt{(a_c-a)^2+d-1}}+a-a_c\,.
\]

The \emph{Felli \& Schneider curve} is given by the set of equations
\[
n=\frac{d-2-\beta}\alpha+2=\frac{d-\gamma}\alpha\quad\mbox{and}\quad\alpha^2=\frac{d-1}{n-1}\,.
\]
Altogether, we find that~\eqref{FS} means
\[
(2+\beta-\gamma)\,(2\,d-\beta-\gamma-2)>4\,(d-1)\,,
\]
that is,
\[
h_{\rm FS}(\beta,\gamma):=(d-\gamma)^2-(\beta-d+2)^2-4\,(d-1)<0\,.
\]
The threshold of this domain is given by the hyperbola $\gamma\mapsto\big(\gamma,\beta_\pm(\gamma)\big)$ and corresponds, in the admissible parameter range with $\gamma<d$ to $\beta=\beta_{\rm FS}(\gamma)$, in the language of Theorem~\ref{Thm:SymmetryBreaking}. See Fig.~\ref{Fig:F1}.

\subsection{An existence result}\label{Sec:ExistenceCKN}
\begin{proposition}\label{Prop:Existence} Assume that $\beta$, $\gamma$ and $p$ satisfy~\eqref{parameters}. Then there exists an optimal function in $\L^{p+1,\gamma}(\R^d)$ with $\nabla w\in\L^{2,\beta}(\R^d)$ such that
\[
\nrm w{2p,\gamma}=\C_{\beta,\gamma,p}\,\nrm{\nabla w}{2,\beta}^\vartheta\,\nrm w{p+1,\gamma}^{1-\vartheta}\,,
\]
where $\C_{\beta,\gamma,p}$ is the best constant in~\eqref{CKN}.\end{proposition}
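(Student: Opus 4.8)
The plan is to establish existence of an optimizer for the Caffarelli–Kohn–Nirenberg inequality \eqref{CKN} by the direct method of the calculus of variations, following the now-standard concentration-compactness strategy adapted to the weighted setting, as in~\cite{Catrina-Wang-01}. First I would reformulate the problem as a minimization: after fixing the scaling and dilation invariances, $\C_{\beta,\gamma,p}^{-1}$ equals the infimum of the functional
\[
\mathcal J[w]=\frac{\nrm{\nabla w}{2,\beta}^\vartheta\,\nrm w{p+1,\gamma}^{1-\vartheta}}{\nrm w{2p,\gamma}}
\]
over the natural function space $\mathcal H$ (the completion of $C_0^\infty(\R^d)$ with respect to $\nrm{\nabla\cdot}{2,\beta}+\nrm\cdot{p+1,\gamma}$, cf. the density remark after~\eqref{CKN}). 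One then takes a minimizing sequence $(w_k)$; by homogeneity we may normalize, say $\nrm{w_k}{p+1,\gamma}=1$ and $\nrm{w_k}{2p,\gamma}=1$, so that $\nrm{\nabla w_k}{2,\beta}$ stays bounded. Up to a subsequence $w_k\rightharpoonup w$ weakly, and weak lower semicontinuity of the Dirichlet-type term gives $\nrm{\nabla w}{2,\beta}\le\liminf\nrm{\nabla w_k}{2,\beta}$; the whole difficulty is to show that the two $\L^{q,\gamma}$ norms pass to the limit, i.e. that no mass escapes to $0$, to $\infty$, or spreads out.

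The key tool is a change of variables that transforms the weighted problem into one set on a cylinder (or on $\R^n$ with $n=(d-\gamma)/\alpha$ as in~\eqref{Eqn:alpha-n}), exploiting radial symmetry of the weights: writing $s=|x|$, $\omega=x/|x|$ and using the Emden–Fowler-type substitution $s=e^{-t}$ turns~\eqref{CKN} into a translation-invariant inequality on $\R\times\S^{d-1}$, where the analogue of Lions' concentration-compactness lemma applies directly. I would invoke the subcriticality hypothesis $p<p_\star$ (equivalently $2p<2p_\star=q$, strictly), which is exactly what rules out the vanishing and dichotomy alternatives: the exponent $2p$ is strictly below the "critical" exponent associated with the Sobolev-type embedding, so a local compact embedding is available and a minimizing sequence that neither vanishes nor splits must converge strongly after translation in the $t$ variable. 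Translating back, this produces a genuine optimizer $w\in\L^{p+1,\gamma}(\R^d)$ with $\nabla w\in\L^{2,\beta}(\R^d)$, possibly after a dilation to restore the normalization. For the threshold case $\beta=\frac{d-2}{d}\gamma$ one has $p_\star$ finite and $p<p_\star$ still strict, so the argument is unaffected; the genuinely critical case $p=p_\star$ is excluded from~\eqref{parameters} and is anyway reduced to~\eqref{CKN-DEL}, treated elsewhere.

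The main obstacle is precisely the exclusion of dichotomy (splitting of the minimizing sequence into two bumps drifting apart), and the bookkeeping required to show that the strict subadditivity inequality $\C_{\beta,\gamma,p}^{-1}<\C_{\beta,\gamma,p}^{-1}\,(\theta^{\cdots}+(1-\theta)^{\cdots})$ holds for the relevant concentration parameter — this is where strict subcriticality enters quantitatively, because at $p=p_\star$ subadditivity degenerates to an equality and compactness can genuinely fail (this is the source of symmetry breaking phenomena). A secondary technical point is handling the behavior near the origin and at infinity in the original variables: one must check that the tails of $w_k$ in the weighted $\L^{q,\gamma}$ norms are uniformly small, which again follows from the cylinder reformulation together with the strict gap between $2p$ and the endpoint exponents $p+1$ and $2p_\star$. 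Once strong convergence in $\L^{p+1,\gamma}\cap\L^{2p,\gamma}$ is secured, lower semicontinuity of the gradient term upgrades the weak limit to an actual minimizer, and attainment of $\C_{\beta,\gamma,p}$ follows.
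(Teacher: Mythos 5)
The paper's own ``proof'' is a single line deferring to~\cite[Proposition~2.5]{DMN2015} (which treats the case $\beta=0$) with details left to the reader, so there is no internal argument to compare against. Your concentration-compactness sketch --- Emden--Fowler substitution to a cylinder, Lions' lemma, strict subadditivity from $p<p_\star$ to rule out vanishing and dichotomy, compactness modulo dilations, weak lower semicontinuity of the gradient term --- is a reasonable and standard route to existence in the \emph{strictly} subcritical regime, and is plausibly close in spirit to the reference the authors have in mind.

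There is, however, a concrete error in your proposal that leaves part of the claimed range uncovered. You assert that ``the genuinely critical case $p=p_\star$ is excluded from~\eqref{parameters}.'' It is not: \eqref{parameters} reads $p\in(1,p_\star]$, closed at the right endpoint, and the proposition is stated for the full range. When $p=p_\star$ one has $\vartheta=1$, the interpolation term $\nrm w{p+1,\gamma}$ drops out, and~\eqref{CKN} reduces to the critical inequality~\eqref{CKN-DEL}. At that endpoint your strict-subadditivity argument degenerates precisely as you yourself note, so the dichotomy alternative is no longer ruled out by a strict gap, and your proof as written does not produce an optimizer. For $p=p_\star$ one must instead invoke the existence theory for~\eqref{CKN-DEL} in the range $a<b<a+1$ (i.e.\ $\gamma-2<\beta<\tfrac{d-2}d\,\gamma$), which is due to~\cite{Catrina-Wang-01}; the only non-attainment cases there sit on the boundary of this cone, and these are excluded by the strict inequalities in~\eqref{parameters}. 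The cleanest fix is to split your argument: run the concentration-compactness proof for $p<p_\star$ as you outlined, and append the citation to~\cite{Catrina-Wang-01} for $p=p_\star$, rather than claiming that endpoint is out of scope.
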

\begin{proof} The proof is similar to the one of~\cite[Proposition~2.5]{DMN2015} when $\beta=0$. Details are left to the reader.\end{proof}

\subsection{From Caffarelli-Kohn-Nirenberg to Gagliardo-Nirenberg inequalities}\label{Sec:ChangeOfVariables}

Written in spherical coordinates for a function
\[
u(r,\omega)=w(x)\,,\quad\mbox{with}\quad r=|x|\quad\mbox{and}\quad\omega=\frac x{|x|}\,,
\]
Inequality~\eqref{CKN} becomes
\begin{multline*}
\(\irdsph r{|u|^{2p}}{d-\gamma}\)^\frac1{2p}\\
\le\C_{\beta,\gamma,p}\(\irdsph r{\left|\nabla u\right|^2}{d-\beta}\)^\frac\vartheta2\(\irdsph r{|u|^{p+1}}{d-\gamma}\)^\frac{1-\vartheta}{p+1}
\end{multline*}
where $\left|\nabla u\right|^2=\left|\tfrac{\partial u}{\partial r}\right|^2+\tfrac1{r^2}\,\left|\nabla_{\kern-2pt\omega} u\right|^2$ and $\nabla_{\kern-2pt\omega} u$ denotes the gradient of $u$ with respect to the angular variable $\omega\in\S^{d-1}$. Next we consider the change of variables $r\mapsto s=r^\alpha$,
\[\label{wv}
u(r,\omega)=v(r^\alpha,\omega)\quad\forall\,(r,\omega)\in\R^+\times\S^{d-1}
\]
so that
\begin{multline*}
\(\irdsph s{|v|^{2p}}{\frac{d-\gamma}\alpha}\)^\frac1{2p}\\
\le\alpha^{-\zeta}\,\C_{\beta,\gamma,p}\(\irdsph s{\(\alpha^2\left|\tfrac{\partial v}{\partial s}\right|^2+\tfrac1{s^2}\,|\nabla_{\kern-2pt\omega} v|^2\)}{\frac{d-2-\beta}\alpha+2}\)^\frac\vartheta2\\
\times\(\irdsph s{|v|^{p+1}}{\frac{d-\gamma}\alpha}\)^\frac{1-\vartheta}{p+1}
\end{multline*}
with
\be{zeta}
\zeta:=\frac\vartheta2+\frac{1-\vartheta}{p+1}-\frac1{2\,p}=\frac{(2+\beta-\gamma)\,(p-1)}{2\,p\,\big(d+2+\beta-2\,\gamma-p\,(d-2-\beta)\big)}\,.
\ee
We pick $\alpha$ so that
\[
n=\frac{d-2-\beta}\alpha+2=\frac{d-\gamma}\alpha\,.
\]
Solving these equations means that $n$ and $\alpha$ are given by~\eqref{Eqn:alpha-n}. The change of variables $s=r^\alpha$ is therefore responsible for the introduction of the two parameters, $\alpha$ and $n$, which were involved for instance in the statement of Proposition~\ref{Prop:SpectralGap} and in the discussion of the symmetry breaking region in Section~\ref{Sec:CKNrange}. With the notation
\[
\D v=\(\alpha\,\frac{\partial v}{\partial s},\frac1{s}\,\nabla_{\kern-2pt\omega} v\)\,,
\]
we can write a first inequality,
\begin{multline*}
\(\irdsph s{|v|^{2p}}n\)^\frac1{2p}\\
\le\alpha^{-\zeta}\,\C_{\beta,\gamma,p}\(\irdsph s{|\D v|^2}n\)^\frac\vartheta2\(\irdsph s{|v|^{p+1}}n\)^\frac{1-\vartheta}{p+1}\,,
\end{multline*}
which is equivalent to~\eqref{CKN}. From the point of view of its scaling properties, this inequality is an analogue of a Gagliardo-Nirenberg in dimension~$n$, at least when $n$ is an integer, but the variable $\omega$ still belongs to a sphere of dimension $d-1$. The parameter $\alpha$ is a measure of the intensity of the derivative in the radial direction compared to angular derivatives and plays a crucial role in the symmetry breaking issues, as shown by Condition~\eqref{FS}. Let us summarize what we have shown so far.
\begin{proposition}\label{Prop:GNweigthed} Assume that $\alpha>0$, $n>d$ and $p\in(1,\frac n{n-2}]$. Then the following inequality holds
\be{CKN1}
\nrm v{2p,d-n}\le\mathsf K_{\alpha,n,p}\,\nrm{\D v}{2,d-n}^\vartheta\,\nrm v{p+1,d-n}^{1-\vartheta}\quad\forall\,v\in C_0^\infty(\R^d)\,.
\ee
The optimal constant $\mathsf K_{\alpha,n,p}$ is related with the optimal constant in~\eqref{CKN} by
\[
\C_{\beta,\gamma,p}=\alpha^\zeta\,\mathsf K_{\alpha,n,p}\,,
\]
with $\zeta$ given by~\eqref{zeta}. When symmetry holds, equality in~\eqref{CKN1} is achieved by the function
\[
x\mapsto v_\star(x):=(1+|x|^2)^\frac1{1-p}\,.
\]
\end{proposition}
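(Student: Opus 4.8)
The plan is to deduce \eqref{CKN1} from \eqref{CKN} by means of the radial change of variables $s=r^\alpha$ carried out in this section, reading off both the constant $\mathsf K_{\alpha,n,p}$ and the optimizer along the way. The first thing to do is to match the hypotheses on $(\alpha,n,p)$ with the admissible set \eqref{parameters}: given $\alpha>0$ and $n>d$, set $\gamma:=d-\alpha\,n$ and $\beta:=d-2-\alpha\,(n-2)$, so that \eqref{Eqn:alpha-n} holds. A direct computation gives $\beta-(\gamma-2)=2\,\alpha$, $\tfrac{d-2}d\,\gamma-\beta=\tfrac{2\,\alpha\,(n-d)}d$, $d-\gamma=\alpha\,n$ and $d-2-\beta=\alpha\,(n-2)$, whence $\gamma-2<\beta<\tfrac{d-2}d\,\gamma$, $\gamma<d$, and $p_\star=\tfrac{d-\gamma}{d-2-\beta}=\tfrac n{n-2}$; thus the assumption $p\in(1,\tfrac n{n-2}]$ is precisely $p\in(1,p_\star]$, and \eqref{CKN} is available for these $\beta,\gamma,p$.

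Next I would carry out the change of variables already displayed above: writing \eqref{CKN} in spherical coordinates for $u(r,\omega)=w(x)$ and substituting $u(r,\omega)=v(r^\alpha,\omega)$, one has $\tfrac{dr}r=\alpha^{-1}\,\tfrac{ds}s$ and $|\nabla u|^2=|\partial_r u|^2+r^{-2}\,|\nabla_\omega u|^2=s^{\,2-2/\alpha}\,|\D v|^2$, with $\D v=(\alpha\,\partial_s v,\,s^{-1}\nabla_\omega v)$. Since $n=\tfrac{d-\gamma}\alpha=\tfrac{d-2-\beta}\alpha+2$, all three integrals then carry the single weight $s^{\,n}$, and collecting the powers of $\alpha$ produced by those integrals yields exactly the factor $\alpha^{-\zeta}$ with $\zeta$ as in \eqref{zeta}. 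This establishes \eqref{CKN1} together with $\mathsf K_{\alpha,n,p}\le\alpha^{-\zeta}\,\C_{\beta,\gamma,p}$.

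It remains to promote this to the equality $\C_{\beta,\gamma,p}=\alpha^\zeta\,\mathsf K_{\alpha,n,p}$ and to identify the optimizer. For the first point I would observe that $v\mapsto u$, $u(r,\omega):=v(r^\alpha,\omega)$, is a bijection between the natural weighted function spaces to which \eqref{CKN1} and \eqref{CKN} extend by density with unchanged best constants, and that it multiplies each of the three norms by an explicit power of $\alpha$; feeding an optimizing sequence for \eqref{CKN} through this map gives the reverse bound $\C_{\beta,\gamma,p}\le\alpha^\zeta\,\mathsf K_{\alpha,n,p}$. As for the optimizer, when symmetry holds \eqref{CKNrad} is saturated by $w_\star(x)=(1+|x|^{2+\beta-\gamma})^{1/(1-p)}$ (see Appendix~\ref{Appendix:Mass}), and since $2+\beta-\gamma=2\,\alpha$ the substitution sends $r^{2+\beta-\gamma}$ to $s^2$, so $w_\star$ corresponds to $v_\star(x)=(1+|x|^2)^{1/(1-p)}$. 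The one delicate point in all of this is the bijectivity claim just used: one should verify that the radial dilation $r\mapsto r^\alpha$ genuinely is an isomorphism of the relevant weighted Sobolev-type spaces — in particular that no approximation is lost near $r=0$ (where $r^\alpha$ need not be smooth when $\alpha\notin\N$) or near $r=\infty$ — so that the one-sided bound of the previous paragraph upgrades to the claimed equality of optimal constants. Everything else is the bookkeeping of exponents laid out above.
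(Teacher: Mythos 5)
Your proof is correct and follows essentially the same route as the paper: the radial change of variables $s=r^\alpha$ together with the dictionary $\gamma=d-\alpha n$, $\beta=d-2-\alpha(n-2)$, which turns the hypotheses $\alpha>0$, $n>d$, $p\le n/(n-2)$ into exactly the admissibility conditions~\eqref{parameters} and produces the factor $\alpha^{-\zeta}$ from the three norms. The paper treats the displayed computation in Section~\ref{Sec:ChangeOfVariables} as the proof; your added remarks (verifying the bijection of parameter ranges and flagging the density/isomorphism issue for the radial dilation) are sound and only make explicit what the paper's functional setting already covers.
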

For brevity, we shall refer to~\eqref{CKN1} as a \emph{weighted Gagliardo-Nirenberg inequality}.

\subsection{A linear stability analysis}\label{Sec:VarCKN-LinearStab}

Let us define the functional
\begin{multline*}
\mathcal F[v]:=\vartheta\,\log\(\nrm{\D v}{2,d-n}\)+(1-\vartheta)\,\log\(\nrm v{p+1,d-n}\)+\log\mathsf K_{\alpha,n,p}\\
-\log\(\nrm v{2p,d-n}\)
\end{multline*}
obtained by taking the difference of the logarithm of the two terms in~\eqref{CKN1}. Since~$v_\star$ is a critical point of $\mathcal F$, a Taylor expansion of $\mathcal F[v_\star+\varepsilon\,g]$ at order $\varepsilon^2$ shows that
\[
\mathcal F[v_\star+\varepsilon\,g]=\varepsilon^2\,\mathcal Q[g]+o(\varepsilon^2)
\]
with
\[
\frac2{\mathsf a}\,\mathcal Q[g]=\nrm{\D g}{2,d-n}^2+\frac{\mathsf b}{\mathsf a}\,\iRd{|g|^2\,\frac{|x|^{n-d}}{1+|x|^2}}-\frac{\mathsf c}{\mathsf a}\,\iRd{|g|^2\,\frac{|x|^{n-d}}{\(1+|x|^2\)^2}}
\]
and
\begin{eqnarray*}
&&\mathsf a=\vartheta\,\nrm{\D v_\star}{2,d-n}^{-2}\,,\\
&&\mathsf b=p\,(1-\vartheta)\,\nrm{v_\star}{p+1,d-n}^{-(p+1)}\,,\\
&&\mathsf c=(2\,p-1)\,\nrm{v_\star}{2p,d-n}^{-2\,p}\,.
\end{eqnarray*}
With $\mathsf v(s)=\(1+s^2\)^{-\frac1{p-1}}$, let us compute $\mathsf a$, $\mathsf b$ and $\mathsf c$ using
\begin{eqnarray*}
&&\mathsf A:=\int_0^\infty |\mathsf v'|^2\,s^{n-1}\,ds=\frac4{(p-1)^2}\int_0^\infty\(1+s^2\)^{-\frac{2\,p}{p-1}}\,s^{n+1}\,ds\,,\\
&&\mathsf B:=\int_0^\infty |\mathsf v|^{p+1}\,s^{n-1}\,ds=\int_0^\infty\(1+s^2\)^{-\frac{p+1}{p-1}}\,s^{n-1}\,ds\,,\\
&&\mathsf C:=\int_0^\infty |\mathsf v|^{2\,p}\,s^{n-1}\,ds=\int_0^\infty\(1+s^2\)^{-\frac{2\,p}{p-1}}\,s^{n-1}\,ds\,.\\
\end{eqnarray*}
We may observe that
\[
\tfrac14\,(p-1)^2\,\mathsf A=\int_0^\infty\(1+s^2-1\)\(1+s^2\)^{-\frac{2\,p}{p-1}}\,s^{n-1}\,ds=\mathsf B-\mathsf C
\]
and
\begin{multline*}
\mathsf B=\int_0^\infty\(1+s^2\)\(1+s^2\)^{-\frac{2\,p}{p-1}}\,s^{n-1}\,ds\\
=\mathsf C-\tfrac12\,\tfrac{p-1}{p+1}\int_0^\infty\frac d{ds}\left[\(1+s^2\)^{-\frac{p+1}{p-1}}\right]\,s^n\,ds=\mathsf C+\tfrac n2\,\tfrac{p-1}{p+1}\,\mathsf B\,.
\end{multline*}
Altogether, this proves that
\[
\frac{\mathsf A}{\mathsf B}=\frac{2\,n}{p^2-1}\quad\mbox{and}\quad\frac{\mathsf A}{\mathsf C}=\frac{4\,n}{p-1}\,\frac1{n+2-p\,(n-2)}
\]
and
\begin{multline*}
\frac2{\mathsf a}\,\mathcal Q[g]=\nrm{\D g}{2,d-n}^2+p\,\frac{1-\vartheta}\vartheta\,\frac{\mathsf A}{\mathsf B}\,\alpha^2\iRd{|g|^2\,\frac{|x|^{n-d}}{1+|x|^2}}\\
-\frac{2\,p-1}\vartheta\,\frac{\mathsf A}{\mathsf C}\,\alpha^2\iRd{|g|^2\,\frac{|x|^{n-d}}{\(1+|x|^2\)^2}}\,.
\end{multline*}
Replacing in terms of the original parameters and once all computations are done, we find that the quadratic form $\mathcal Q$ has to be considered on the space $\mathcal X$ of the functions
\[
g\in\L^2\big(\R^d,\tfrac{|x|^{n-d}}{1+|x|^2}\,dx\big)\quad\mbox{such that}\quad\iRd{g\,\tfrac{|x|^{n-d}}{1+|x|^2}}=0\,.
\]
and it is such that
\begin{multline*}
\frac2{\mathsf a}\,\mathcal Q[g]=\nrm{\D g}{2,d-n}^2+\frac{p\,(2+\beta-\gamma)}{(p-1)^2}\,\big[d-\gamma-p\,(d-2-\beta)\big]\iRd{|g|^2\,\frac{|x|^{n-d}}{1+|x|^2}}\\
-p\,(2\,p-1)\,\frac{(2+\beta-\gamma)^2}{(p-1)^2}\iRd{|g|^2\,\frac{|x|^{n-d}}{\(1+|x|^2\)^2}}\,.
\end{multline*}
If $\mathcal Q$ takes negative values, this means that the minimum of $\mathcal F$ cannot be achieved by $v_\star$. In other words, the existence of a function $g$ such that $\mathcal Q[g]<0$ would prove the linear instability of $\mathcal F$ at the critical point $v_\star$. This question will be studied in Section~\ref{Sec:SpectrumResults}.

\section{The weighted fast diffusion equation}\label{Sec:WFD}

In this section, we develop a formal approach, which will be fully justified in~\cite{BDMN2016b}. Heuristically, this section is essential to understand the role of the evolution equation~\eqref{FD}. Let us start with the entropy -- entropy production inequality, which governs the global convergence rates.

\subsection{The equivalence of the Caffarelli-Kohn-Nirenberg and of an entropy -- entropy production inequality in the symmetry range}\label{Sec:NSIIneqEP}

Let us consider Inequality~\eqref{CKN1}. Up to a scaling, the determination of the best constant $\mathsf K_{\alpha,n,p}$ is equivalent to the minimization of the functional
\[
v\mapsto\mathcal H[v]:=\tfrac{\mathrm A}2\,\nrm{\D v}{2,d-n}^2+\tfrac{\mathrm B}{p+1}\,\nrm v{p+1,d-n}^{p+1}-\tfrac1{2\,p}\,\nrm v{2p,d-n}^{2\,p\,\frac{n+2-p(n-2)}{n-p(n-4)}}
\]
where the two positive constants $\mathrm A$ and $\mathrm B$ are chosen such that
\[
v_\star(x)=\(1+|x|^2\)^{-\frac1{p-1}}\quad\forall\,x\in\R^d
\]
is a critical point with critical level $0$.
\begin{proposition}\label{Prop:EquivEP-NSII} Assume that the parameters satisfy~\eqref{parameters}. With the notations of Section~\ref{Sec:Intro} and $(\alpha,n)$ defined by~\eqref{Eqn:alpha-n}, for any $u\in C_0^\infty(\R^d)$ such that $\nrm u{1,\gamma}=M_\star$ and $v$ such that $u^{m-\frac12}(x)=v\big(|x|^{\alpha-1}\,x\big)$ for any $x\in\R^d$, we have
\[
\mathcal I[u]-\tfrac{1-m}m\,(2+\beta-\gamma)^2\,\mathcal F[u]=\frac4\alpha\,\frac{(m-1)^2}{(2\,m-1)^2}\,\mathcal H[v]\,.
\]
\end{proposition}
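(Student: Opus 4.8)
The plan is to verify the identity by a direct change of variables, reducing the weighted functionals $\mathcal F[u]$ and $\mathcal I[u]$ (defined with weights $|x|^{-\gamma}$ and $|x|^{-\beta}$ on $\R^d$) to the unweighted--exponent functional $\mathcal H[v]$ on $\R^d$ via the radial substitution $s=|x|^\alpha$, exactly the substitution used in Section~\ref{Sec:ChangeOfVariables} to pass from~\eqref{CKN} to~\eqref{CKN1}. First I would record the pointwise relations: with $u^{m-1/2}(x)=v(|x|^{\alpha-1}x)$ one has $u=v^{1/(m-1/2)}=v^{2p/(p+1)}$ after writing $m=\frac{p+1}{2p}$, hence $u^m=v^{2p\,m/(p+1)}=v^{p}\cdot v^{\text{(something)}}$; more precisely $m/(m-1/2)=p$, so $u^m=v^p(\text{composed with }s=r^\alpha)$, while $u^{m-1}$ becomes a power of $v$ with exponent $(m-1)/(m-1/2)=\frac{1-p}{p+1}\cdot(-1)\cdots$ — the point is that every power of $u$ appearing in $\mathcal F$ and $\mathcal I$ turns into a power of $v$ of the type $v^{2p}$, $v^{p+1}$ or $|\mathsf D_\alpha v|^2$, which are precisely the three ingredients of $\mathcal H[v]$.

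Next I would treat the three pieces of $\mathcal I[u]-\tfrac{1-m}m(2+\beta-\gamma)^2\,\mathcal F[u]$ separately. For the Fisher information, expand $\nabla u^{m-1}-\nabla\mathfrak B_{\beta,\gamma}^{m-1}$: since $\mathfrak B_{\beta,\gamma}^{m-1}=1+|x|^{2+\beta-\gamma}$ and $2+\beta-\gamma=2\alpha$, its gradient is an explicit radial field, and the cross term together with the $|\nabla\mathfrak B_{\beta,\gamma}^{m-1}|^2$ term will, after using the substitution $s=r^\alpha$ and the weight $|x|^{-\beta}$, combine with the ``confinement'' part of the relative entropy. The linear-in-$(v-\mathfrak B)$ term in $\mathcal F$ is what will eat the cross term: this is the standard entropy computation, where integrating by parts turns $\int v\,\nabla v^{m-1}\cdot\nabla|x|^{2\alpha}\,|x|^{-\beta}\,dx$ into a multiple of $\int v^m\,|x|^{-\gamma}\,dx$ (using $\nabla\cdot(|x|^{-\beta}\nabla|x|^{2\alpha})\propto|x|^{-\gamma}$, which is exactly the algebraic fact behind the choice of $n$ and $\alpha$). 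After collecting terms, the remaining genuinely nonlinear pieces are $\int v^{2p}$, $\int v^{p+1}$ and $\int|\mathsf D_\alpha v|^2$ with $\mathfrak B_{\beta,\gamma}$-weights that, under the substitution, become constants times $(1+s^2)^{-\text{power}}$, matching the definition of $v_\star$ and of $\mathrm A,\mathrm B$ in $\mathcal H$; the prefactors $\mathrm A/2$, $\mathrm B/(p+1)$, $1/(2p)$ are then forced, and one reads off the Jacobian factor $\alpha$ from $ds=\alpha\,r^{\alpha-1}dr$ together with the $(m-1)^2/(2m-1)^2$ coming from differentiating $u^{m-1}$ in terms of $v$ (the chain rule gives $\nabla u^{m-1}=\tfrac{m-1}{m-1/2}\,u^{1/2-m}\cdots$, producing exactly $(m-1)^2/(2m-1)^2$ when squared).

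The main obstacle is purely bookkeeping: matching the three multiplicative constants $\mathrm A$, $\mathrm B$ and the $2p\,\frac{n+2-p(n-2)}{n-p(n-2)}$ exponent in $\mathcal H$ — together with the scaling normalization that makes $v_\star$ a critical point at level $0$ — against the coefficients produced by the change of variables and the entropy integration by parts, and checking that the mass normalization $\nrm u{1,\gamma}=M_\star$ (i.e. $C_{M_\star}=1$) is exactly what makes $\mathfrak B_{\beta,\gamma}^{m-1}=1+|x|^{2\alpha}$ map to $v_\star=(1+|x|^2)^{1/(1-p)}$. Since these are all explicit rational functions of $d,\beta,\gamma,p$ linked by~\eqref{Eqn:alpha-n} and $m=\frac{p+1}{2p}$, the verification is mechanical; I would organize it by first fixing the substitution and the normalization, then computing $\mathcal I[u]$, then $\mathcal F[u]$, and finally taking the stated linear combination so that all non-$\mathcal H$ terms cancel, leaving the clean prefactor $\tfrac4\alpha\,\tfrac{(m-1)^2}{(2m-1)^2}$.
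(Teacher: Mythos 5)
Your proposal follows the same route as the paper's proof: expand $\mathcal F$ and $\mathcal I$ into a gradient term, an entropy term $\int u^m\,|x|^{-\gamma}\,dx$, a moment term $\int |x|^{2+\beta-2\gamma}\,u\,dx$, and constants; integrate the cross term in $\mathcal I$ by parts using $\nabla\cdot\big(|x|^{-\beta}\,\nabla|x|^{2+\beta-\gamma}\big)=(2+\beta-\gamma)(d-\gamma)\,|x|^{-\gamma}$ so it combines with the confinement; observe that the coefficient $\tfrac{1-m}m\,(2+\beta-\gamma)^2$ is precisely the one that makes the moment terms cancel in the linear combination; and then pass to $v$ via the radial substitution $s=r^\alpha$, producing the overall factor $\tfrac4\alpha\,\tfrac{(m-1)^2}{(2m-1)^2}$ from the chain rule on $u^{m-1}$ and the Jacobian $ds=\alpha\,r^{\alpha-1}\,dr$. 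This is exactly the paper's argument, which is itself only a sketch.

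One correction in your bookkeeping: with $m=\tfrac{p+1}{2p}$ one has $m-\tfrac12=\tfrac1{2p}$, so $u=v^{2p}$ (not $v^{2p/(p+1)}$), and $m/(m-\tfrac12)=p+1$ (not $p$); thus $u^m\mapsto v^{p+1}$, $u^{m-1}\mapsto v^{1-p}$ and $u\mapsto v^{2p}$, which are precisely the three powers appearing in $\mathcal H$. These slips do not affect the structure of the argument, only the exponent-matching you already flag as the mechanical step.
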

See~\cite{MR1940370} for details in a similar result, without weights. The consequences of the presence of weights will be discussed in Section~\ref{Sec:Conclusion}.
\begin{proof} Let us give the main steps of the computation. If we expand the free energy and the Fisher information, we obtain that
\[
\mathcal F[u]:=\frac1{m-1}\int_{\R^d}u^m\,\frac{dx}{|x|^\gamma}+\frac m{1-m}\int_{\R^d}|x|^{2+\beta-2\gamma}\,(u-\mathfrak B_{\beta,\gamma})\,dx-\frac{\nrm{\mathfrak B_{\beta,\gamma}^m}{1,\gamma}}{m-1}
\]
and
\begin{multline*}
\mathcal I[u]:=4\,\frac{(m-1)^2}{(2\,m-1)^2}\int_{\R^d}\big|\,\nabla u^{m-1/2}\big|^2\,\frac{dx}{|x|^\beta}+(2+\beta-\gamma)^2\int_{\R^d}|x|^{2+\beta-2\gamma}\,u\,dx\\
-\frac2m\,(2+\beta-\gamma)\,(1-m)\,(d-\gamma)\int_{\R^d}u^m\,\frac{dx}{|x|^\gamma}\,.
\end{multline*}
The proportionality constant $\tfrac{1-m}m\,(2+\beta-\gamma)^2$ is such that the coefficient of the moment $\int_{\R^d}|x|^{2+\beta-2\gamma}\,u\,dx$ vanishes. A lengthy but elementary computation shows that $\nrm{\mathfrak B_{\beta,\gamma}^m}{1,\gamma}=\nrm v{2p,d-n}^{2\,p\,\frac{n+2-p(n-2)}{n-p(n-4)}}$.\end{proof}
Notice that the result of Proposition~\ref{Prop:EquivEP-NSII} also holds for a function $u$ such that $\nrm u{1,\gamma}=M\neq M_\star$ if we take the free energy with respect to the Barenblatt profile with same mass.

\subsection{Linearization in the entropy -- entropy production framework}\label{Sec:NSIIneq}

A simple computation shows that, in the expression of $\mathcal H[v]$,
\[
\mathrm A=\tfrac{(p-1)^2}{4\,p\,\alpha^2}\,\mathrm C\,,\quad\mathrm B=\tfrac{n-p\,(n-2)}{2\,p}\,\mathrm C\,,\quad\mbox{and}\quad\mathrm C=\tfrac{n+2-p\,(n-2)}{n-p\,(n-4)}\,\nrm{v_\star}{2p,d-n}^{-\,2\,p\,\frac{2\,(p-1)}{n-p(n-4)}}\,.
\]
For a given function $v\in C_0^\infty(\R^d)$, let us consider $v_\scaling(x):=\scaling^\frac n{2p}\,v(\scaling\,x)$ for any $x\in\R^d$. An optimization of
\begin{multline*}
\mathsf h(\scaling):=\mathcal H[v_\scaling]=\tfrac{\mathrm A}2\,\nrm{\D v}{2,d-n}^2\,\scaling^{\frac np-n+2}+\tfrac{\mathrm B}{p+1}\,\nrm v{p+1,d-n}^{p+1}\,\scaling^{n\,\frac{p+1}{2\,p}-n}\\
-\tfrac1{2\,p}\,\nrm v{2p,d-n}^{2\,p\,\frac{n+2-p(n-2)}{n-p(n-4)}}
\end{multline*}
with respect to $\scaling>0$ shows the existence of a unique minimizer $\scaling_\star>0$, for which
\[
2\,p\,\mathsf h(\scaling_\star)=\(\mathsf K_{\alpha,n,p}^\star\,\nrm{\D v}{2,d-n}^\vartheta\,\nrm v{p+1,d-n}^{1-\vartheta}\)^{2\,p\,\frac{n+2-p(n-2)}{n-p(n-4)}}-\nrm v{2p,d-n}^{2\,p\,\frac{n+2-p(n-2)}{n-p(n-4)}}.
\]
In case of symmetry, the inequality $\mathcal H[v]\ge0$ is therefore equivalent to~\eqref{CKN1}. Without symmetry, a similar computation shows that $\mathcal H[v]\ge\inf\mathcal H$ is also equivalent to~\eqref{CKN1}.

{}From the point of view of symmetry breaking, whether the minimum of the functional is achieved by $v_\star$, \emph{i.e.}, $\mathcal H[v]\ge\mathcal H[v_\star]=0$, or if $\inf\mathcal H$ is negative corresponds either to the symmetry case, or to the symmetry breaking case. A Taylor expansion of the functional $\mathcal H$ around $v_\star$ gives rise to the quadratic form defined by
\[
\iRd{|\D g|^2\,|x|^{n-d}}+p\,\frac{\mathrm B}{\mathrm A}\iRd{|g|^2\,\frac{|x|^{n-d}}{1+|x|^2}}-(2\,p-1)\,\frac{\mathrm C}{\mathrm A}\iRd{|g|^2\,\frac{|x|^{n-d}}{(1+|x|^2)^2}}
\]
which, up to a multiplication by a positive constant, coincides with $\mathcal Q[g]$ defined in Section~\ref{Sec:VarCKN-LinearStab}. Hence the discussion of the linear instability will be exactly the same.

\subsection{Linearization of the weighted fast diffusion equation}\label{Sec:LinearizationFlow}

Now let us turn our attention to flow issues. The change of variables $v(t,r,\omega)=w(t,r^\alpha,\omega)$ transforms~\eqref{Eqn:FD-FP} into
\be{Eqn:FD-FP2}
w_t-\,\mathsf D_\alpha^*\,\Big[\,w\,\D\(w^{m-1}-|x|^2\)\Big]=0
\ee
upon defining $\mathsf D_\alpha^*$ as the adjoint to $\D$ on $\L^2(\R^d,|x|^{n-d}\,dx)$ so that, if $\mathbf f$ and $g$ are respectively a vector valued function and a scalar valued function, then
\[
\iRd{\mathbf f\cdot\D g\,|x|^{n-d}}=\iRd{(\mathsf D_\alpha^*\,\mathbf f)\,g\,|x|^{n-d}}\,.
\]
In other words, if we take a representation of $\mathbf f$ adapted to spherical coordinates, $s=|x|$ and $\omega=x/s$, and consider $f_s:=\mathbf f\cdot \omega$ and $\mathbf f_\omega:=\mathbf f-f_s\,\omega$, then
\[
\mathsf D_\alpha^*\mathbf f=-\,\alpha\,s^{1-n}\,\frac\partial{\partial s}\(s^{n-1}\,f_s\)-\,\frac1s\,\nabla_{\!\omega}\cdot\mathbf f_\omega\,,
\]
where $\nabla_{\!\omega}$ denotes the gradient with respect to angular derivatives only. We also obtain that

\[
\mathsf D_\alpha^*\,\Big[\,w_1\,\D w_2\Big]=-\,\D w_1\cdot\D w_2+w_1\,\mathsf D_\alpha^*\(\D w_2\)
\]
where, with $s=|x|$,
\[
-\,\mathsf D_\alpha^*\(\D w_2\)=\frac{\alpha^2}{s^{n-1}}\,\frac\partial{\partial s}\(s^{n-1}\,\frac{\partial w_2}{\partial s}\)+\frac1{s^2}\,\Delta_\omega w_2
\]
and $\Delta_\omega$ represents the Laplace-Beltrami operator acting on $\omega\in\S^{d-1}$.

\medskip With the change of variables $s=r^\alpha$, Barenblatt type stationary solutions $\mathfrak B_{\beta,\gamma}$ are transformed into standard Barenblatt profiles
\[
\mathcal B(x)=\mathfrak B_{0,0}(x)=\(1+|x|^2\)^\frac1{m-1}\quad\forall\,x\in\R^d\,.
\]
The Barenblatt function $\mathcal B$ is expected to attract the solution $w$ to~\eqref{Eqn:FD-FP2}, so that $w/\mathcal B$ converges to $1$ as $t\to+\infty$. We shall prove in~\cite{BDMN2016b} that this holds true in the norm of uniform convergence. This suggests to write $w=\mathcal B\,(1+\varepsilon\,\mathcal B^{1-m}\,\fg)$ as in~\cite{BBDGV} and write a linearized equation for $f$ by formally taking the limit as $\varepsilon\to0$, in order to explore the asymptotic regime as $t\to+\infty$. Hence we obtain the linearized flow
\[\label{Eqn:rellinFD-FP2}
\fg_t-(m-1)\,\mathcal L\,\fg=0
\]
at lowest order with respect to $\varepsilon$, with an operator $\mathcal L$ on $\L^2(\R^d,|x|^{n-d}\,\mathcal B^{2-m}\,dx)$ defined by
\[
\mathcal L\,\fg:=\mathcal B^{m-2}\,\mathsf D_\alpha^*\Big(\,\mathcal B\,\D\fg\Big)\,.
\]

An expansion of $\mathcal F$ and $\mathcal I$ in terms of $v=\mathcal B\(1+\varepsilon\,\fg\,\mathcal B^{1-m}\)$ at order two in $\varepsilon$ gives rise to the expressions
\[
\varepsilon^{-2}\,\mathcal F[v]\sim\tfrac m{2\,\alpha}\,\mathsf F[\fg]\,,\quad\varepsilon^{-2}\,\mathcal I[v]\sim\tfrac{(1-m)^2}\alpha\,\mathsf I[\fg]\,,
\]
where
\[
\mathsf F[\fg]= \int_{\R^d}|\fg|^2\,\mathcal B^{2-m}\,|x|^{n-d}\,dx\quad\mbox{and}\quad\mathsf I[\fg]=\int_{\R^d}|\D\fg|^2\,\mathcal B\,|x|^{n-d}\,dx
\]
while the condition $\int_{\R^d}\fg\,\mathcal B^{2-m}\,|x|^{n-d}\,dx=0$ is satisfied because of the mass conservation. Differentiating $\mathsf F[\fg(t,\cdot)]$ along the linearized flow gives
\[
\frac d{dt}\mathsf F[\fg(t,\cdot)] = -\,2\,(1-m)\,\mathsf I[\fg(t,\cdot)]\,.
\]
The expansion of $\mathcal I-\tfrac{1-m}m\,(2+\beta-\gamma)^2\,\mathcal F$ around $\mathcal B$ also gives, at order $\varepsilon^2$ and with the notations of Section~\ref{Sec:VarCKN-LinearStab},
\[
\tfrac{2\,(1-m)^2}{\mathsf a\,\alpha}\,\mathcal Q[\fg]=\tfrac{(1-m)^2}\alpha\,\mathsf I[\fg]-\tfrac{1-m}m\,(2+\beta-\gamma)^2\,\tfrac m{2\,\alpha}\,\mathsf F[\fg]=\tfrac{(1-m)^2}\alpha\,\Big(\mathsf I[\fg]-\Lambda_\star\,\mathsf F[\fg]\Big)
\]
with
\be{LambdaStar}
\Lambda_\star:=\frac{(2+\beta-\gamma)^2}{2\,(1-m)}=2\,\alpha^2\,\delta
\ee
and the linear instability, that is, the fact that $\mathcal Q$ takes negative values, immediately follows if $\Lambda<\Lambda_\star$. If symmetry holds, we deduce from the \emph{entropy -- entropy production} inequality~\eqref{Ineq:E-EP} that
\[
\mathsf I[\fg]\ge\Lambda_\star\,\mathsf F[\fg]\,.
\]
Even without symmetry, a spectral gap inequality stating that $\mathsf I[\fg]\ge\Lambda\,\mathsf F[\fg]$ with optimal constant $\Lambda$ results in the decay estimate
\[
\mathsf F[\fg(t,\cdot)]\le\mathsf F_0\,e^{-\,2\,(1-m)\,\Lambda\,t}\quad\mbox{as}\quad t\to+\infty\,.
\]
The existence of such a spectral gap inequality is the subject of the next section. This concludes the strategy for the proof of Theorem~\ref{Thm:Asymptotic rates}. Of course, in order to justify this formal approach, one has to establish additional properties, like the \emph{uniform relative convergence}, which means that $v/\mathfrak B$ uniformly converges to $1$: this is the main result of~\cite{BDMN2016b}. Notice that in the non-weighted case $(\beta,\gamma)=(0,0)$ (see~\cite[Corollary~1, page~709]{1004}), the spectral gap constant is given by $\Lambda=2/(1-m)$ and we recover a decay of order $e^{-4t}$ as in~\cite{BBDGV}. As for symmetry breaking, what matters is to compare $\Lambda$ and $\Lambda_\star$: if $\Lambda<\Lambda_\star$, by considering an eigenfunction associated with the spectral gap, we shall prove that $\mathcal Q$ takes negative eigenvalues, and this is what establishes the result of Theorem~\ref{Thm:SymmetryBreaking}. We shall come back to these issues in Sections~\ref{Sec:LinearizationSetting} and~\ref{Sec:SBVar}.

If we denote by $\scal\cdot\cdot$ the natural scalar product on $\L^2(\R^d,|x|^{n-d}\,d\mu_{\delta+1})$ given by $\scal{\fg_1}{\fg_2}=\int_{\R^d}\fg_1\,\fg_2\,|x|^{n-d}\,d\mu_{\delta+1}$ where $\delta=1/(1-m)$ and $\mu_\delta(x)=(1+|x|^2)^{-\delta}$ as in Proposition~\ref{Prop:SpectralGap}, then the linearized free energy and the linearized Fisher information take the form
\[
\mathsf F[\fg]=\scal\fg\fg\quad\mbox{and}\quad\mathsf I[\fg]=\scal\fg{\mathcal L\,\fg}\,,
\]
and $\mathcal L$ is self-adjoint on $\L^2(\R^d,|x|^{n-d}\,d\mu_{\delta+1})$. We are now ready to study the spectrum of $\mathcal L$.

\section{Spectral properties of the linearized operator and consequences}\label{Sec:SpectrumResults}

\subsection{Results on the spectrum}\label{Sec:Spectrum}

Non-constant coefficients of $\mathcal L$ are invariant under rotations with respect to the origin, so that a spherical harmonics decomposition can be made to compute the spectrum. Let $\mu_\kell=\kell\,(\kell+d-2)$, $\kell\in\N$ be the sequence of the eigenvalues of the Laplace-Beltrami operator on $\S^{d-1}$. The problem is reduced to find the critical values $\Lambda_{\ellk,\kell}\in[0,\Lambda_{\rm ess})$ of the Rayleigh quotient
\[\label{RQ1}
f\mapsto\frac{\int_0^\infty\(\alpha^2\,|f'(s)|^2+\frac{\mu_\kell}{s^2}\,|f(s)|^2\)\frac{s^{n-1}\,ds}{(1+s^2)^\delta}}{\int_0^\infty|f(s)|^2\,\frac{s^{n-1}\,ds}{(1+s^2)^{\delta+1}}}\,.
\]
Here we take the convention to index $\Lambda_{\ellk,\kell}$ with $\ellk$, $\kell\in\N$. The spectral component $\kell=0$ corresponds to radial functions in $\L^2(\R^d,d\mu_{\delta+1})$. Alternatively, the problem is reduced to find the eigenvalues $\Lambda_{\ellk,\kell}$ defined by the Euler-Lagrange equations associated to the Rayleigh quotient, \emph{i.e.},
\be{EV}
-\,\alpha^2\,\frac d{ds}\left[\frac{s^{n-1}}{(1+s^2)^\delta}\,f_{\ellk,\kell}'(s)\right]+\frac{\mu_\kell\,s^{n-3}}{(1+s^2)^\delta}\,f_{\ellk,\kell}(s)-\frac{\Lambda_{\ellk,\kell}\,s^{n-1}}{(1+s^2)^{\delta+1}}\,f_{\ellk,\kell}(s)=0\,.
\ee
Our key technical result is the following lemma. Complements can be found in Appendix~\ref{Sec:SpectrumAditional}.
\begin{lemma}\label{Lem:SpectrumResults} Let $d\ge2$, $\alpha\in(0,+\infty)$, $n>d$ and $\delta>0$. Then the following properties hold:
\begin{enumerate}
\item[(i)] The kernel of $\mathcal L$ on the space $\L^2(\R^d,|x|^{n-d}\,d\mu_{\delta+1})$ is generated by the constants. As a consequence,
\[
\Lambda_{0,0}=0\,.
\]
\item[(ii)] The essential spectrum of the operator $\mathcal L$ is the interval $[\Lambda_{\rm ess},\infty)$ with
\[
\Lambda_{\rm ess}=\tfrac14\,\alpha^2\,\(n-2-2\,\delta\)^2\,.
\]
\item[(iii)] In the radial component, the first positive eigenvalue of $\mathcal L$ is given by
\[
\Lambda_{1,0}=2\,\alpha^2\,(2\,\delta-n)
\]
in the range $\delta>1+\frac n2$ and the corresponding eigenspace contains
\[
f_{1,0}(s)=s^2-\frac n{2\,\delta-n}\quad\forall\,s\ge0\,.
\]
There is no such eigenvalue if $0<\delta\le1+\frac n2$.
\item[(iv)] The smallest eigenvalue of $\mathcal L$ corresponding to a non-radial component is
\[
\Lambda_{0,1}=\,2\,\alpha^2\,\delta\,\eta\,,
\]
in the range $\delta>\eta+\frac{n-2}2+\frac{\sqrt{d-1}}\alpha$, where $\eta$ is the unique positive solution to~\eqref{Eqn:eq-eta}. The corresponding eigenspace is one-dimensional and generated by
\[
f_{0,1}(s)=s^\eta\quad\forall\,s\ge0\,.
\]
\end{enumerate}
\end{lemma}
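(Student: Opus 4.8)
The plan is to analyze the one-parameter family of singular Sturm--Liouville problems~\eqref{EV} directly, exploiting the explicit algebraic structure of the weights $s^{n-1}(1+s^2)^{-\delta}$. For part~(i), I would note that $\mathcal L f = 0$ with $f\in\L^2$ forces, after multiplying by $f\,|x|^{n-d}$ and integrating by parts (justified by the decay of $\mathcal B$), that $\D f = 0$ almost everywhere; since $\D f = (\alpha\,\partial_s f, s^{-1}\nabla_\omega f)$, this means $f$ is constant. The normalization $\int \fg\,|x|^{n-d}\,d\mu_{\delta+1}=0$ then shows constants contribute nothing beyond $\Lambda_{0,0}=0$. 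For part~(ii), I would change variables to put~\eqref{EV} in Schr\"odinger form: the substitution $f(s)=s^{-(n-1)/2}(1+s^2)^{\delta/2}\,g(\tau)$ with $\tau=\tau(s)$ chosen so that the principal term becomes $-\alpha^2 g''$, reduces the radial problem to a Schr\"odinger operator on a half-line whose potential tends to a constant as $s\to\infty$ (because $(1+s^2)^{-1}\sim s^{-2}$ and the competing terms balance). The constant is exactly $\tfrac14\alpha^2(n-2-2\delta)^2$, and Weyl's theorem on the invariance of the essential spectrum under relatively compact perturbations gives $\Lambda_{\rm ess}=[\tfrac14\alpha^2(n-2-2\delta)^2,\infty)$.

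For parts~(iii) and~(iv), the strategy is to \emph{guess} the eigenfunctions and verify. In the radial sector $\kell=0$, I would plug $f_{1,0}(s)=s^2-c$ into~\eqref{EV}: the left-hand side is a rational expression in $s$ whose vanishing forces both $c=\tfrac{n}{2\delta-n}$ and $\Lambda_{1,0}=2\alpha^2(2\delta-n)$; this is a short computation using $\tfrac{d}{ds}[s^{n-1}(1+s^2)^{-\delta}(s^2-c)'] = \tfrac{d}{ds}[2s^n(1+s^2)^{-\delta}]$ and collecting powers of $s$. For the non-radial smallest eigenvalue, the ansatz $f_{0,1}(s)=s^\eta$ with $\mu_1=d-1$ plugged into~\eqref{EV} again produces a two-term relation: the $s^{n-3}(1+s^2)^{-\delta}$ terms fix $\alpha^2\eta(\eta+n-2)=d-1$, i.e.~\eqref{Eqn:eq-eta}, and the remaining terms fix $\Lambda_{0,1}=2\alpha^2\delta\eta$. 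The $\delta$-ranges ($\delta>1+\tfrac n2$ and $\delta>\eta+\tfrac{n-2}2+\tfrac{\sqrt{d-1}}\alpha$) come from requiring these eigenvalues to lie \emph{below} $\Lambda_{\rm ess}$ and the eigenfunctions to be square-integrable against $|x|^{n-d}\,d\mu_{\delta+1}$ — a direct check on the integrability of $s^{2\eta}s^{n-1}(1+s^2)^{-\delta-1}$ near infinity.

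The main obstacle is not producing these candidate eigenfunctions but proving \emph{minimality}: that $\Lambda_{1,0}$ is the lowest positive radial eigenvalue and $\Lambda_{0,1}$ the lowest non-radial one, with one-dimensional eigenspaces. For this I would use Sturm oscillation theory for the transformed Schr\"odinger problem: $f_{1,0}$ has exactly one sign change on $(0,\infty)$, so by the oscillation theorem it is the \emph{second} radial eigenfunction (the ground state being the constant $f_{0,0}$ at level $0$), hence there is no eigenvalue strictly between $0$ and $\Lambda_{1,0}$; similarly $f_{0,1}(s)=s^\eta$ is positive and nodeless, so within the $\kell=1$ sector it is the ground state, and it lies below all $\kell\ge2$ sectors since $\mu_\kell$ is increasing in $\kell$ and the Rayleigh quotient is monotone in $\mu_\kell$. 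One-dimensionality of each eigenspace follows from the standard fact that the $\L^2$ solution space of a second-order ODE below the essential spectrum is at most one-dimensional. Care is needed at the endpoints of the $\delta$-ranges, where the discrete eigenvalue merges into the essential spectrum; there I would argue the eigenfunction ceases to be square-integrable, which is why the stated ranges are sharp.
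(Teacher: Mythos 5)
Your proposal follows essentially the same route as the paper: the kernel argument via $\D f=0$ in (i), a guess-and-verify computation for $f_{1,0}$ and $f_{0,1}$ in (iii)--(iv), and Sturm oscillation/nodal-count reasoning plus monotonicity of the Rayleigh quotient in $\mu_\kell$ to establish minimality and one-dimensionality. The only genuine technical divergence is in part (ii): the paper invokes Persson's lemma together with the sharp weighted Hardy inequality
\[
\iRd{|\D\fg|^2\,|x|^{n-d}}\ge\tfrac14\,\alpha^2\,(n-2)^2\iRd{|\fg|^2\,|x|^{n-d-2}}
\]
to read off $\Lambda_{\rm ess}$ directly on the original operator, whereas you propose to conjugate the Sturm--Liouville problem into half-line Schr\"odinger form and apply Weyl's theorem on relatively compact perturbations. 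Both are standard and yield the same answer; the Persson route avoids having to verify relative compactness of the conjugated potential, while yours is perhaps more mechanical once the effective potential is in hand.

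One point worth sharpening: you justify the $\delta$-threshold in (iv) by square-integrability of $s^\eta$ against $s^{n-1}(1+s^2)^{-\delta-1}\,ds$, which gives $\delta>\eta+\tfrac{n-2}2$. But the stated range $\delta>\eta+\tfrac{n-2}2+\tfrac{\sqrt{d-1}}\alpha$ is strictly larger, and it is the condition $\Lambda_{0,1}<\Lambda_{\rm ess}$ that is binding there, not integrability. You do mention both requirements, so this is a matter of emphasis rather than a gap; note however that in (iii) the two conditions happen to coincide (since $\Lambda_{\rm ess}-\Lambda_{1,0}=\tfrac14\alpha^2(2\delta-n-2)^2$ vanishes exactly at the integrability threshold $\delta=\tfrac{n+2}2$), which can mask the distinction if one only checks that part. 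Finally, you have the correct signs throughout: the eigenfunction is $s^2-\tfrac n{2\delta-n}$ with a node, consistent with the oscillation count, and $\Lambda_{1,0}=2\alpha^2(2\delta-n)$, matching the statement (the paper's proof text contains a couple of sign typos in these formulas that you have avoided).
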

The results of the Lemma~\ref{Lem:SpectrumResults} are illustrated in Figs.~\ref{Fig:F2} and ~\ref{Fig:F3}. Also see Appendix~\ref{Sec:SpectrumAditional} for further details on the spectrum of $\mathcal L$. Notice that the whole range $\delta>0$ is covered, while results deduced from~\eqref{CKN} require $\delta\ge n$.
\begin{proof}Each of the properties relies on elementary considerations.
\\
(i) The kernel of $\mathcal L$ is characterized by the equation $\D f=0$.
\\
(ii) According to Persson's lemma in~\cite{MR0133586}, the infimum of the essential spectrum of the operator $\mathcal L$ is given by the Hardy inequality, as in~\cite{MR2320246},
\begin{multline*}
\iRd{|\D\fg|^2\,|x|^{n-d}}-\(\Lambda_{\rm ess}+\alpha^2\,\delta\,(n-2-\delta)\)\iRd{|\fg|^2\,|x|^{n-d-2}}\\
\ge\(\tfrac14\,\alpha^2\,(n-2)^2-\Lambda_{\rm ess}-\alpha^2\,\delta\,(n-2-\delta)\)\iRd{|\fg|^2\,|x|^{n-d-2}}
\end{multline*}
if we request that the coefficient of the right hand side is actually zero. Hence,
\[
\Lambda_{\rm ess}:=\tfrac14\,\alpha^2\,(n-2)^2-\alpha^2\,\delta\,(n-2-\delta)=\tfrac14\,\alpha^2\,\(n-2-2\,\delta\)^2\,.
\]
\\
(iii) By direct computation, we find that $f_{1,0}(s)=s^2-\frac n{n-2\,\delta}$ and $\Lambda_{1,0}=2\,\alpha^2\,(\delta-\,2\,n)$ solve~\eqref{EV}. Notice that this mode does not break the symmetry since it corresponds to a radial mode ($k=0$). It is orthogonal to $f_{0,0}=1$ and corresponds to $\ellk=1$ by the Sturm-Liouville theory.
\\
(iv) We can check that $f_{0,1}(s)=s^\eta$ and $\Lambda_{0,1}=\,2\,\alpha^2\,\delta\,\eta$ provides a solution to~\eqref{EV}, hence the ground state in the $\kell=1$ component because $f_{0,1}$ is nonnegative, as soon as $\eta>0$ solves~\eqref{Eqn:eq-eta}. Up to a multiplication by a constant, it is unique by the Sturm-Liouville theory: another independent eigenfunction would have to change sign and its positive part would also be a solution with support strictly included in $\R^+$, a contradiction with the unique continuation property of the solution to the ordinary differential equation.

The admissibility of the functions $f_{1,0}$ and $f_{0,1}$ as well as the ranges of existence of the lowest eigenvalues in terms of~$\delta$ are discussed in Appendix~\ref{Sec:SpectrumAditional}. Also see Fig.~\ref{Fig:F4}.\end{proof}

We are now able to prove Proposition~\ref{Prop:SpectralGap}. We recall that $\Lambda_{1,0}=\Lambda_{0,1}$ determines the curve $\beta=\sigma(\gamma,p)$: see Figs.~\ref{Fig:F2},~\ref{Fig:F3},~\ref{Fig:F4} and~\ref{Fig:F5}.

\begin{proof}[Proof of Proposition~\ref{Prop:SpectralGap}] Notice first that~\eqref{Eqn:SpectralGap} only expresses that
\[
\Lambda=\min\{\Lambda_{1,0},\Lambda_{0,1}\}\,.
\]
Indeed, we have that
\[
\Lambda_{0,1}-\Lambda_{1,0}=2\,\alpha^2\,\delta\,\left(\eta-2+\frac n\delta\right)
\]
is negative if and only if
\[
\eta<2-\frac n\delta:=\eta_0\,.
\]
Since the map $x\mapsto x\,(x+n-2)$ is increasing on $\R_+$, the condition $\eta<\eta_0$ is equivalent to
\[
\alpha^2>\frac{d-1}{\eta_0\,(\eta_0+n-2)}=\frac{(d-1)\,\delta^2}{n\,(2\,\delta-n)\,(\delta-1)},
\]
and~\eqref{Eqn:SpectralGap} directly follows.

It remains to prove that $\Lambda_{\rm ess}>\min\{\Lambda_{1,0},\Lambda_{0,1}\}$. Let us first compute
\[
\Lambda_{\rm ess}-\Lambda_{1,0}=\frac14\,\alpha^2\,(n-2-2\,\delta)^2-2\,\alpha^2\,(2\,\delta-n)=\frac14\,\alpha^2\,(2\,\delta-n-2)^2
\]
and observe that it is positive since $\delta=(n+2)/2>n$ if and only if $n<2$. As a consequence, we obtain that
\[
\Lambda=\min\{\Lambda_{\rm ess},\Lambda_{1,0},\Lambda_{0,1}\}=\min\{\Lambda_{1,0},\Lambda_{0,1}\}
\]
in the range defined by~Ê\eqref{parameters}. This completes the proof.
\end{proof}

\begin{corollary}\label{Cor:SB} Under the assumptions of Lemma~\ref{Lem:SpectrumResults}, $\Lambda_\star$ is larger than $\Lambda$ if and only if~\eqref{FS} holds.\end{corollary}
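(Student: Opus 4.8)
The plan is to reduce the comparison $\Lambda_\star>\Lambda$ to the elementary inequality $\eta<1$. First I would collect the explicit values: by~\eqref{LambdaStar} one has $\Lambda_\star=2\,\alpha^2\,\delta$, while Lemma~\ref{Lem:SpectrumResults} together with the proof of Proposition~\ref{Prop:SpectralGap} gives $\Lambda=\min\{\Lambda_{1,0},\Lambda_{0,1}\}$ with $\Lambda_{1,0}=2\,\alpha^2\,(2\,\delta-n)$ and $\Lambda_{0,1}=2\,\alpha^2\,\delta\,\eta$, this minimum lying strictly below $\Lambda_{\rm ess}$ in the range~\eqref{parameters}. The decisive remark is that, in that range, the sub-criticality constraint $p\le p_\star$ amounts to $\delta\ge n$, so that $\Lambda_{1,0}-\Lambda_\star=2\,\alpha^2\,(\delta-n)\ge0$. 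Hence the radial eigenvalue $\Lambda_{1,0}$ never lies below $\Lambda_\star$, and $\Lambda_\star>\Lambda$ is equivalent to $\Lambda_\star>\Lambda_{0,1}$: this is immediate when $\Lambda_{0,1}\le\Lambda_{1,0}$, while when $\Lambda_{0,1}>\Lambda_{1,0}$ one has $\Lambda_\star\le\Lambda_{1,0}<\Lambda_{0,1}$ and both statements are false.

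It then remains to convert $\Lambda_\star>\Lambda_{0,1}$, i.e.\ $\eta<1$, into~\eqref{FS}. Recall that $\eta$ is defined by~\eqref{Eqn:eq-eta} as the unique positive root of $x\mapsto x\,(x+n-2)=\tfrac{d-1}{\alpha^2}$, and that this map is strictly increasing on $[0,+\infty)$ since $n>d\ge2$. Therefore $\eta<1$ holds if and only if $\tfrac{d-1}{\alpha^2}=\eta\,(\eta+n-2)<1\cdot(1+n-2)=n-1$, that is, if and only if $\alpha^2>\tfrac{d-1}{n-1}$, which is precisely condition~\eqref{FS}. Chaining $\Lambda_\star>\Lambda\iff\Lambda_\star>\Lambda_{0,1}\iff\eta<1$ with this last equivalence finishes the argument.

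The proof is essentially bookkeeping, and I do not expect any real obstacle. The two points that deserve a line of care are: first, recording that $\delta\ge n$ in the admissible parameter range — already used in the proof of Proposition~\ref{Prop:SpectralGap} — without which $\Lambda_{1,0}$ could fall below $\Lambda_\star$ and the stated equivalence would break down; and second, justifying that $\Lambda=\min\{\Lambda_{1,0},\Lambda_{0,1}\}$ rather than $\min\{\Lambda_{\rm ess},\Lambda_{1,0},\Lambda_{0,1}\}$, which is exactly the inequality $\Lambda_{\rm ess}-\Lambda_{1,0}=\tfrac14\,\alpha^2\,(2\,\delta-n-2)^2>0$ established in the proof of Proposition~\ref{Prop:SpectralGap}.
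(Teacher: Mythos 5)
Your proposal is correct and follows the same route as the paper: compute $\Lambda_\star-\Lambda_{1,0}$ and $\Lambda_\star-\Lambda_{0,1}$, observe that $\Lambda_{1,0}\ge\Lambda_\star$ in the sub-critical range $\delta\ge n$, and reduce the statement to $\eta<1\iff\alpha^2>\tfrac{d-1}{n-1}$. The only difference is that you spell out the step ``$\Lambda_\star>\Lambda\iff\Lambda_\star>\Lambda_{0,1}$'' (using $\delta\ge n$ and $\Lambda=\min\{\Lambda_{1,0},\Lambda_{0,1}\}<\Lambda_{\rm ess}$) that the paper's proof leaves implicit after computing the same two differences.
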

\begin{proof} We recall that by~\eqref{LambdaStar}, $\Lambda_\star=2\,\alpha^2\,\delta$. An elementary computation gives
\[
\Lambda_{1,0}-\Lambda_\star=2\,\alpha^2\,(\delta-n)\quad\mbox{and}\quad\Lambda_{0,1}-\Lambda_\star=2\,\alpha^2\,\delta\,\big(\eta-1\big)\,,
\]
hence $\Lambda_{0,1}<\Lambda_\star$ if and only if $\eta<1$, that is, $\alpha^2>\frac{d-1}{n-1}$ which amounts to~\eqref{FS} and completes the proof.\end{proof}
\begin{figure}[ht]
\begin{center}\hspace*{-1cm}
\includegraphics[width=6.5cm]{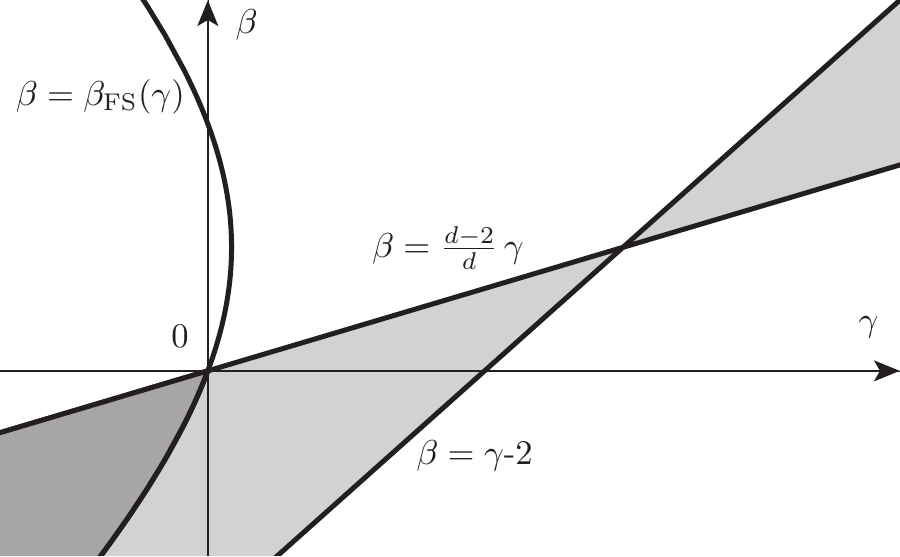}\hspace*{6pt}\includegraphics[width=6.5cm]{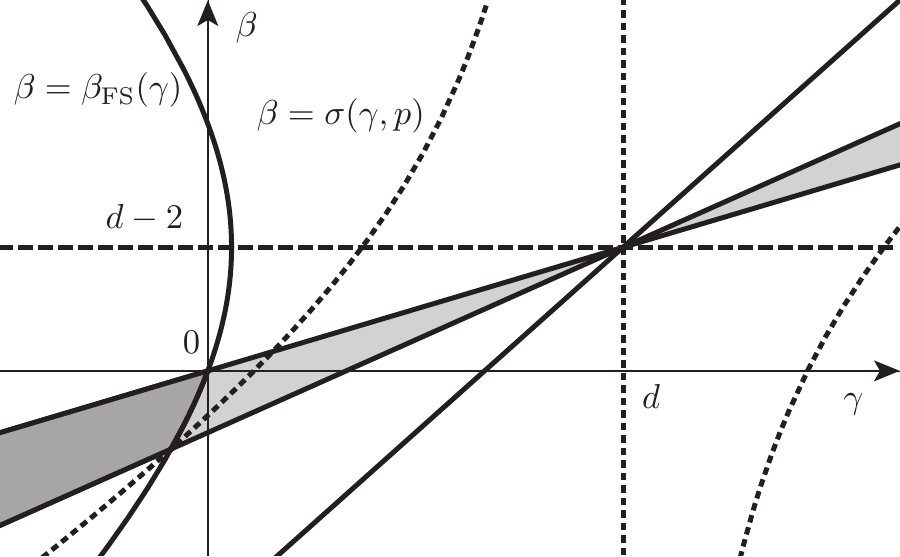}
\caption{\label{Fig:F2} With $d=3$, the left figure is essentially an enlargement of Fig.~\ref{Fig:F1} and represents the symmetry breaking region, while on the right figure, we choose $p=2$, so that the admissible range of parameters $(\beta,\gamma)$ is restricted by the condition $p\le p_\star(\beta,\gamma)$, \emph{i.e.}, $\beta\ge d-2-(d-\gamma)/p$. This lower bound corresponds to the line determined by the points $(\beta,\gamma)=(d-2,d)$ and $(\beta,\gamma)$ given by the condition $\Lambda_\star=\Lambda_{0,1}=\Lambda_{1,0}$. The curve $\beta=\sigma(\gamma,p)$ in Theorem~\ref{Thm:Asymptotic rates} is represented by a dotted curve. To $\beta\ge\sigma(\gamma,p)$ corresponds the case $\Lambda=\Lambda_{0,1}\le\Lambda_{1,0}$, while $\beta\le\sigma(\gamma,p)$ corresponds to the case $\Lambda_{0,1}\ge\Lambda_{1,0}=\Lambda$, when $\gamma\in(-\infty,d)$.}
\end{center}
\end{figure}
\begin{figure}[ht]
\begin{center}\hspace*{-1cm}
\includegraphics[width=6.5cm]{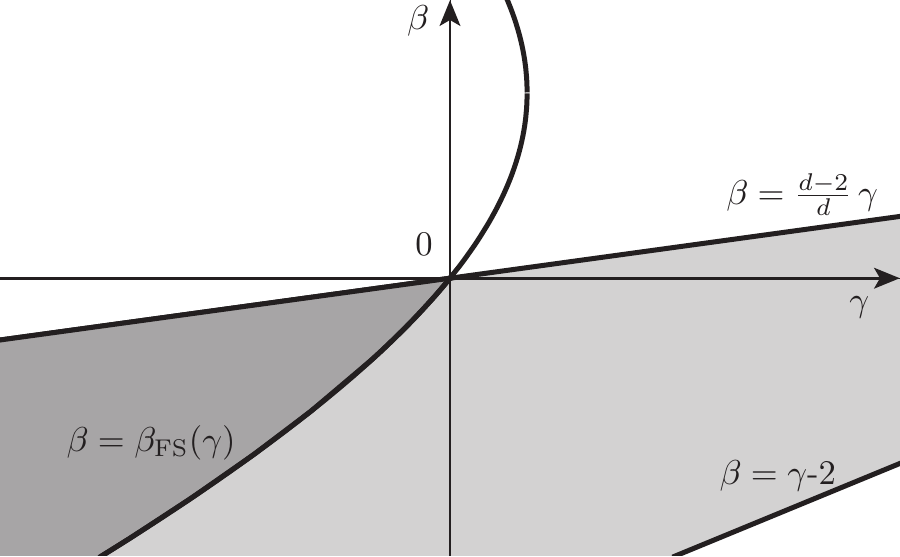}\hspace*{6pt}\includegraphics[width=6.5cm]{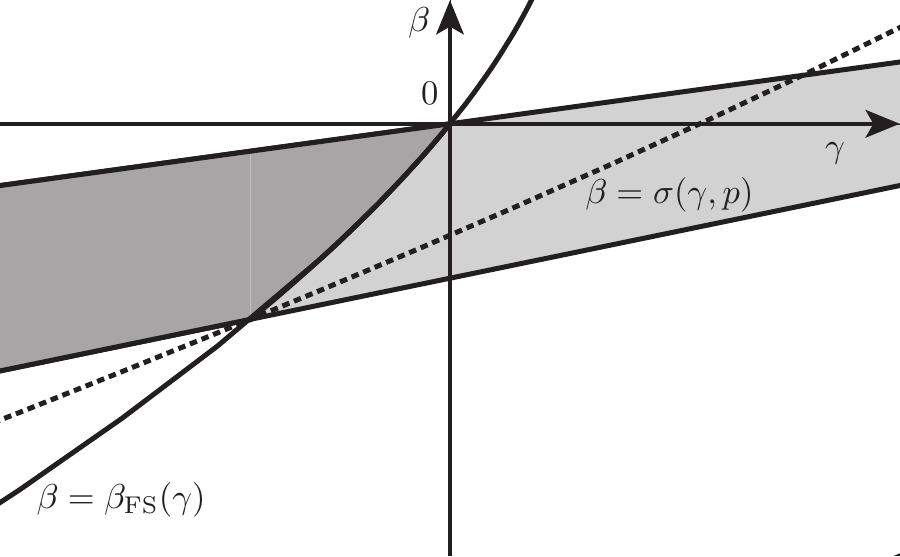}
\caption{\label{Fig:F3} Enlargement of Fig.~\ref{Fig:F2} in a neighborhood of $(\beta,\gamma)=(0,0)$. On the right, the equality case $\Lambda_{0,1}=\Lambda_{1,0}$ determines the dotted curve $\beta=\sigma(\gamma,p)$. Notice that the symmetry breaking region is contained in the region in which the spectral gap is $\Lambda=\Lambda_{0,1}$.}
\end{center}
\end{figure}

\subsection{Symmetry breaking: a proof based on the nonlinear flow}\label{Sec:LinearizationSetting}

Here we admit the result of Theorem~\ref{Thm:Asymptotic rates}, which is proved in~\cite{BDMN2016b}.

\begin{proof}[Proof of Theorem~\ref{Thm:SymmetryBreaking}] If symmetry holds~\eqref{CKN}, then the \emph{entropy -- entropy production} inequality~\eqref{Ineq:E-EP} also holds and it is then clear by considering the large time asymptotics of the solution to~\eqref{Eqn:FD-FP} that the estimate $\mathcal F[v(t,\cdot)]\le O\(e^{-\,2\,(1-m)\,\Lambda_\star\,t}\)$ given by~\eqref{EntropyDecay} and~\eqref{LambdaStar} is not compatible with $\mathcal F[v(t,\cdot)]=O\(e^{-\,2\,(1-m)\,\Lambda_{0,1}\,t}\)$ if
\be{SBCdt}
4\,\alpha^2=(2+\beta-\gamma)^2=2\,(1-m)\,\Lambda_\star>2\,(1-m)\,\Lambda_{0,1}=\,4\,\alpha^2\,\eta\,.
\ee
Indeed, we may use an eigenfunction $f_{0,1}$ associated with $\Lambda_{0,1}$ to consider a perturbation of the Barenblatt function, that is, we can test the quotient $\mathcal I/\mathcal F$ by $\mathfrak B\(1+\varepsilon\,f_{0,1}\,\mathfrak B^{1-m}\)$ and let $\varepsilon\to0$. Hence, if~\eqref{FS} holds, \emph{i.e.}, if $d-1-(n-1)\,\alpha^2<0$, then $\eta$ given by~\eqref{Eqn:eq-eta} satifies $\eta<1$ and symmetry breaking occurs. It is shown in Section~\ref{Sec:CKNrange} that this provides us with the condition $\beta>\beta_{\rm FS}(\gamma)$ in Theorem~\ref{Thm:SymmetryBreaking}.\end{proof}

\subsection{Symmetry breaking: a variational approach}\label{Sec:SBVar}

To make this paper self-contained, we give a variational proof of Theorem~\ref{Thm:SymmetryBreaking} based on the more standard, variational approach of~\cite{Catrina-Wang-01,Felli-Schneider-03} for~\eqref{CKN-DEL}.

\medskip As a corollary of Lemma~\ref{Lem:SpectrumResults}, we determine the optimal constant $\kappa$ in
\begin{multline}
\iRd{|\D g|^2\,|x|^{n-d}}+\kappa\iRd{|g|^2\,\frac{|x|^{n-d}}{1+|x|^2}}\\
\ge p\,(2\,p-1)\,\frac{(2+\beta-\gamma)^2}{(p-1)^2}\iRd{|g|^2\,\frac{|x|^{n-d}}{(1+|x|^2)^2}}\,.\label{Hardy-Poincare2}
\end{multline}
\begin{corollary}\label{Cor:SpectrumEquivalence} If $\delta=\frac{2\,p}{p-1}$, Inequalities~\eqref{Hardy-Poincare1} and~\eqref{Hardy-Poincare2} are equivalent and their optimal constants are related by
\[
\kappa=\frac p{(p-1)^2}\,(2+\beta-\gamma)\,\big[d-2-\beta-p\,(d+\gamma-2\,\beta-4)\big]-\Lambda\,.
\]
Moreover the two operators associated with the quadratic forms have the same spectral gaps.\end{corollary}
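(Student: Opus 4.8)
The plan is to connect the two Hardy--Poincaré inequalities \eqref{Hardy-Poincare1} and \eqref{Hardy-Poincare2} by recognizing that both are eigenvalue problems for the same Schrödinger-type operator on $\L^2(\R^d,|x|^{n-d}\,d\mu_{\delta+1})$, only shifted by a multiple of the potential $|x|^{n-d}(1+|x|^2)^{-1}$. First I would rewrite \eqref{Hardy-Poincare2} in the reduced radial--angular variables used throughout Section~\ref{Sec:SpectrumResults}, so that testing against spherical harmonics reduces it (as for \eqref{Hardy-Poincare1}) to the family of one-dimensional Rayleigh quotients in \eqref{RQ1}. With the choice $\delta=\frac{2p}{p-1}$ one checks directly that $2p-1=\delta-1\cdot(\text{something})$ and, more to the point, that the coefficient $p\,(2p-1)\,\frac{(2+\beta-\gamma)^2}{(p-1)^2}$ appearing in \eqref{Hardy-Poincare2} equals the constant $\mathsf c/\mathsf a$ computed in Section~\ref{Sec:VarCKN-LinearStab}; this is the routine algebraic identification that ties $\delta=\frac{2p}{p-1}$ to the geometry of $v_\star$.

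Next I would make the shift explicit. Write $W(x):=|x|^{n-d}(1+|x|^2)^{-1}$ and $V(x):=|x|^{n-d}(1+|x|^2)^{-2}$, and let $\mathsf c_0:=p\,(2p-1)\,\frac{(2+\beta-\gamma)^2}{(p-1)^2}$. Inequality~\eqref{Hardy-Poincare1} says $\iRd{|\D f|^2\,|x|^{n-d}}\ge\Lambda\,\iRd{|f|^2\,W}$ on the subspace orthogonal (in the $W$-weighted sense, since $d\mu_{\delta+1}=(1+|x|^2)^{-1}d\mu_\delta$ and with $\delta=\frac{2p}{p-1}$ one has $d\mu_{\delta+1}(x)\,|x|^{n-d}=W(x)\,dx$ up to the verification that $\delta+1=\frac{2p}{p-1}+1$ matches the weight in \eqref{Hardy-Poincare1}) to the constants. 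The point is that with this value of $\delta$, the weight $|x|^{n-d}d\mu_{\delta+1}$ in \eqref{Hardy-Poincare1} is exactly $W$, and the potential $|x|^{n-d}d\mu_{\delta+2}=V$. Then \eqref{Hardy-Poincare2} reads $\iRd{|\D g|^2\,|x|^{n-d}}+\kappa\iRd{|g|^2\,W}\ge\mathsf c_0\iRd{|g|^2\,V}$. Since the quadratic form $\iRd{|\D g|^2\,|x|^{n-d}}-\mathsf c_0\iRd{|g|^2\,V}$ differs from the form $\iRd{|\D g|^2\,|x|^{n-d}}$ only by the $W$-bounded perturbation $-\mathsf c_0\,V+\text{(nothing)}$—more precisely, the two self-adjoint operators are $\mathcal L_1=$ (operator of \eqref{Hardy-Poincare1}, whose bottom of spectrum on the orthogonal of constants is $\Lambda$) and $\mathcal L_2=\mathcal L_1-\mathsf c_0\,(\text{multiplication by }V/W)$ suitably interpreted—the optimal $\kappa$ is exactly $\mathsf c_0\cdot(\text{the constant relating }V\text{ to }W\text{ via the Euler--Lagrange equation})-\Lambda$. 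Concretely: if $f$ realizes (or approaches) the optimum in \eqref{Hardy-Poincare1}, then $\iRd{|\D f|^2|x|^{n-d}}=\Lambda\iRd{|f|^2W}$; plugging the same $f$ into \eqref{Hardy-Poincare2} and using the Euler--Lagrange relation \eqref{EV} (with the appropriate $\mu_\kell$), one reads off the value of $\kappa$ forcing equality, namely the stated expression. The identity $\frac p{(p-1)^2}(2+\beta-\gamma)\big[d-2-\beta-p(d+\gamma-2\beta-4)\big]=\mathsf c_0\cdot(\text{that ratio})$ is then a finite computation using $n=2\frac{d-\gamma}{2+\beta-\gamma}$, $\alpha=1+\frac{\beta-\gamma}2$, $\delta=\frac{2p}{p-1}$.

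For the ``same spectral gaps'' claim, I would argue that the operators $\mathcal L_1$ and $\mathcal L_2:=\mathcal L_1+\kappa\,(\text{mult.\ by }W/W)-\mathsf c_0(\text{mult.\ by }V/W)$ (acting in the same Hilbert space) commute with the spherical-harmonic decomposition, and on each fixed component $\kell$ the one-dimensional operators differ by the \emph{same} rank-considerations: the substitution $\delta\leftrightarrow\frac{2p}{p-1}$ turns \eqref{EV} into exactly the Euler--Lagrange equation for \eqref{Hardy-Poincare2}, with eigenvalues shifted by the affine map $\Lambda_{\ellk,\kell}\mapsto \frac p{(p-1)^2}(2+\beta-\gamma)[d-2-\beta-p(d+\gamma-2\beta-4)]-\Lambda_{\ellk,\kell}$ composed with the overall additive constant from $\kappa$. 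An affine bijection of the spectrum preserves gaps between consecutive eigenvalues (up to sign, which is absorbed since we are comparing $\Lambda_{1,0}$ and $\Lambda_{0,1}$, the two lowest positive ones in both problems, and the curve $\beta=\sigma(\gamma,p)$ where $\Lambda_{1,0}=\Lambda_{0,1}$ is the same).

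The main obstacle I expect is purely bookkeeping: verifying that with $\delta=\frac{2p}{p-1}$ the three weights $|x|^{n-d}d\mu_{\delta+1}$, $|x|^{n-d}(1+|x|^2)^{-1}$ and $|x|^{n-d}(1+|x|^2)^{-2}$ align correctly between \eqref{Hardy-Poincare1}, \eqref{Hardy-Poincare2} and the Section~\ref{Sec:VarCKN-LinearStab} form $\mathcal Q$, and then that the coefficient $\mathsf c/\mathsf a=p(2p-1)\frac{(2+\beta-\gamma)^2}{(p-1)^2}$ matches $\mathsf c_0$ while $\mathsf b/\mathsf a$ produces the affine shift giving the stated $\kappa$. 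Once the dictionary $\{m,p\}\leftrightarrow\{\delta,\alpha,n\}$ is fixed (via $m=\frac{p+1}{2p}$, $\delta=\frac1{1-m}=\frac{2p}{p-1}$, and \eqref{Eqn:alpha-n}), everything reduces to substituting into the explicit eigenvalues of Lemma~\ref{Lem:SpectrumResults} and Proposition~\ref{Prop:SpectralGap}; there is no analytic difficulty, only the risk of a sign or factor error in the algebra.
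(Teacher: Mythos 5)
Your plan misses the one non\-trivial step the paper actually needs: the conjugation $\fg=(1+|x|^2)^{\delta/2}\,g$. The two quadratic forms you want to compare do \emph{not} have the same Dirichlet part. Inequality~\eqref{Hardy-Poincare1} has gradient term $\int_{\R^d}|\D f|^2\,|x|^{n-d}\,(1+|x|^2)^{-\delta}\,dx$, carrying the $d\mu_\delta$ weight, while~\eqref{Hardy-Poincare2} has the unweighted $\int_{\R^d}|\D g|^2\,|x|^{n-d}\,dx$. At several points you silently drop the $(1+|x|^2)^{-\delta}$ factor --- for instance when you write that an optimizer $f$ of~\eqref{Hardy-Poincare1} satisfies $\int_{\R^d}|\D f|^2\,|x|^{n-d}\,dx=\Lambda\int_{\R^d}|f|^2\,W\,dx$: that identity would require $\delta=0$, whereas here $\delta=\tfrac{2p}{p-1}>2$. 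For the same reason, a spherical-harmonic reduction of~\eqref{Hardy-Poincare2} does not land on the one-dimensional Rayleigh quotients associated with~\eqref{Hardy-Poincare1}. Because the Dirichlet forms are genuinely different, the two operators are not related by adding a bounded multiplication operator to a common Laplacian, and the ``plug in the same $f$, read off an affine shift of the spectrum'' reasoning does not go through as written.

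What makes the comparison work is the ground-state (Doob) transformation $\fg=(1+|x|^2)^{\delta/2}\,g$, which is unitary between the relevant weighted $\L^2$ spaces. Expanding $|\D\fg|^2\,(1+|x|^2)^{-\delta}$ in terms of $g$ removes the $(1+|x|^2)^{-\delta}$ on the gradient at the price of generating two potential terms proportional to $(1+|x|^2)^{-1}$ and $(1+|x|^2)^{-2}$; simultaneously $\int_{\R^d}|\fg|^2\,|x|^{n-d}\,d\mu_{\delta+1}$ becomes $\int_{\R^d}|g|^2\,|x|^{n-d}\,(1+|x|^2)^{-1}\,dx$. Only after this step do you obtain a form with Dirichlet part $\int_{\R^d}|\D g|^2\,|x|^{n-d}\,dx$ and two potentials, which can be identified term-by-term with~\eqref{Hardy-Poincare2} and with $\mathcal Q$ from Section~\ref{Sec:VarCKN-LinearStab}. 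The remaining algebra --- substituting $\alpha$, $n$, $\delta=\tfrac{2p}{p-1}$ and matching $\mathsf c/\mathsf a$ and $\mathsf b/\mathsf a$ --- is indeed the bookkeeping you describe, and the ``same spectral gaps'' claim then follows from unitarity of the conjugation (which preserves the full spectrum), rather than from an affine relabeling. Your outline has the right dictionary and target formula for $\kappa$, but without the conjugation the equivalence is not established.
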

\begin{proof} If we define $\fg=(1+|x|^2)^{\delta/2}\,g$, then the spectral gap problem~\eqref{Hardy-Poincare1} is reduced to the equivalent problem of finding the largest positive $\Lambda$ such that the quadratic form
\[
\iRd{|\D g|^2\,|x|^{n-d}}+\iRd{\(\frac{\alpha^2\,\delta\,(\delta+2-n)-\Lambda}{1+|x|^2}-\frac{\alpha^2\,\delta\,(\delta+2)}{(1+|x|^2)^2}\)|g|^2\,|x|^{n-d}}
\]
is nonnegative. We conclude by identifying the terms with those in~\eqref{Hardy-Poincare2} and replacing $\alpha$ and $\delta$ by their values in terms of $\beta$ and $\gamma$.\end{proof}

Let us consider the functional $\mathcal Q$ as defined in Section~\ref{Sec:VarCKN-LinearStab}. Using
\[
g_{0,1}(x):=(1+|x|^2)^{-\delta/2}\,\fg_{0,1}(x)
\]
as a test function, where $\fg_{0,1}$ is an eigenfunction associated with $\Lambda_{0,1}$, we observe that $\mathcal Q[g_{0,1}]<0$ if and only~if
\begin{multline*}
\frac{p\,(2+\beta-\gamma)}{(p-1)^2}\,\big[d-\gamma-p\,(d-2-\beta)\big]\\
<\kappa:=\frac p{(p-1)^2}\,(2+\beta-\gamma)\,\big[d-2-\beta-p\,(d+\gamma-2\,\beta-4)\big]-\Lambda_{0,1}
\end{multline*}
where the right-hand side follows from Corollary~\ref{Cor:SpectrumEquivalence} when $\Lambda=\Lambda_{0,1}$, \emph{i.e.}, when $\beta\ge\sigma(\gamma,p)$. In any case, we find that $\mathcal Q[g_{0,1}]<0$ if $\Lambda_{0,1}<2\,\alpha^2\,\delta$. Hence we recover the condition of Theorem~\ref{Thm:SymmetryBreaking} as in Section~\ref{Sec:LinearizationSetting}.

\section{Conclusions}\label{Sec:Conclusion}

Let us summarize what we have learned in this paper so far. Three interpolation inequalities have been considered:
\begin{itemize}
\item the Caffarelli-Kohn-Nirenberg inequalities~\eqref{CKN},
\item the entropy -- entropy production inequality~\eqref{Ineq:E-EP},
\item the weighted Gagliardo-Nirenberg inequality~\eqref{CKN1}.
\end{itemize}
In case of symmetry, these three inequalities are equivalent and the linear stability of the radial optimal functions has been reduced to the discussion of the sign of the quadratic form $\mathcal Q$, that is, of the sign of $\Lambda_\star-\Lambda_{0,1}$: according to~\eqref{SBCdt}, whenever it is positive, we know that symmetry breaking occurs. As observed in Section~\ref{Sec:LinearizationSetting}, this is consistent with the dynamic point of view. When symmetry occurs, the global rate of convergence of the entropy is bounded by $\Lambda_\star$ and the slowest asymptotic rate of convergence is determined by $\Lambda_{0,1}$, so that $\Lambda_\star-\Lambda_{0,1}$ has to be nonpositive: if $\Lambda_\star-\Lambda_{0,1}>0$, then by contradiction symmetry breaking occurs.

The spectral gap in the Hardy-Poincar\'e inequality~\eqref{Hardy-Poincare1} determines the worst asymptotic rate of convergence of a solution, and this rate is sharp. We observe three possible regions, which are shown in Fig.~\ref{Fig:F4}.
\begin{itemize}
\item[]\hspace*{-12pt} Region \circled{1} : $\Lambda=\Lambda_{0,1}<\Lambda_\star<\Lambda_{1,0}<\Lambda_{\rm ess}$, symmetry breaking occurs,
\item[]\hspace*{-12pt} Region \circled{2} : $\Lambda_\star<\Lambda=\Lambda_{0,1}<\Lambda_{1,0}<\Lambda_{\rm ess}$,
\item[]\hspace*{-12pt} Region \circled{3} : $\Lambda_\star<\Lambda=\Lambda_{1,0}<\Lambda_{0,1}<\Lambda_{\rm ess}$.
\end{itemize}
Of course, one can consider the threshold cases in which some inequalities become equalities. For instance in the limit case $(\beta,\gamma)=(0,0)$, it turns out that $\Lambda_\star=\Lambda_{0,1}$. We also have to notice that $\Lambda_\star-\Lambda_{0,1}>0$ is only a sufficient condition for symmetry breaking, for which we know that $\C_{\beta,\gamma,p}>\C_{\beta,\gamma,p}^\star$, but the actual region for symmetry breaking could \emph{a priori} be larger than \circled{1}. Actually, based on recent results obtained in~\cite{DELM2015}, we learn that symmetry holds in \circled{2} and \circled{3}.
\begin{figure}[ht]
\begin{center}\hspace*{-1cm}
\includegraphics[width=10cm]{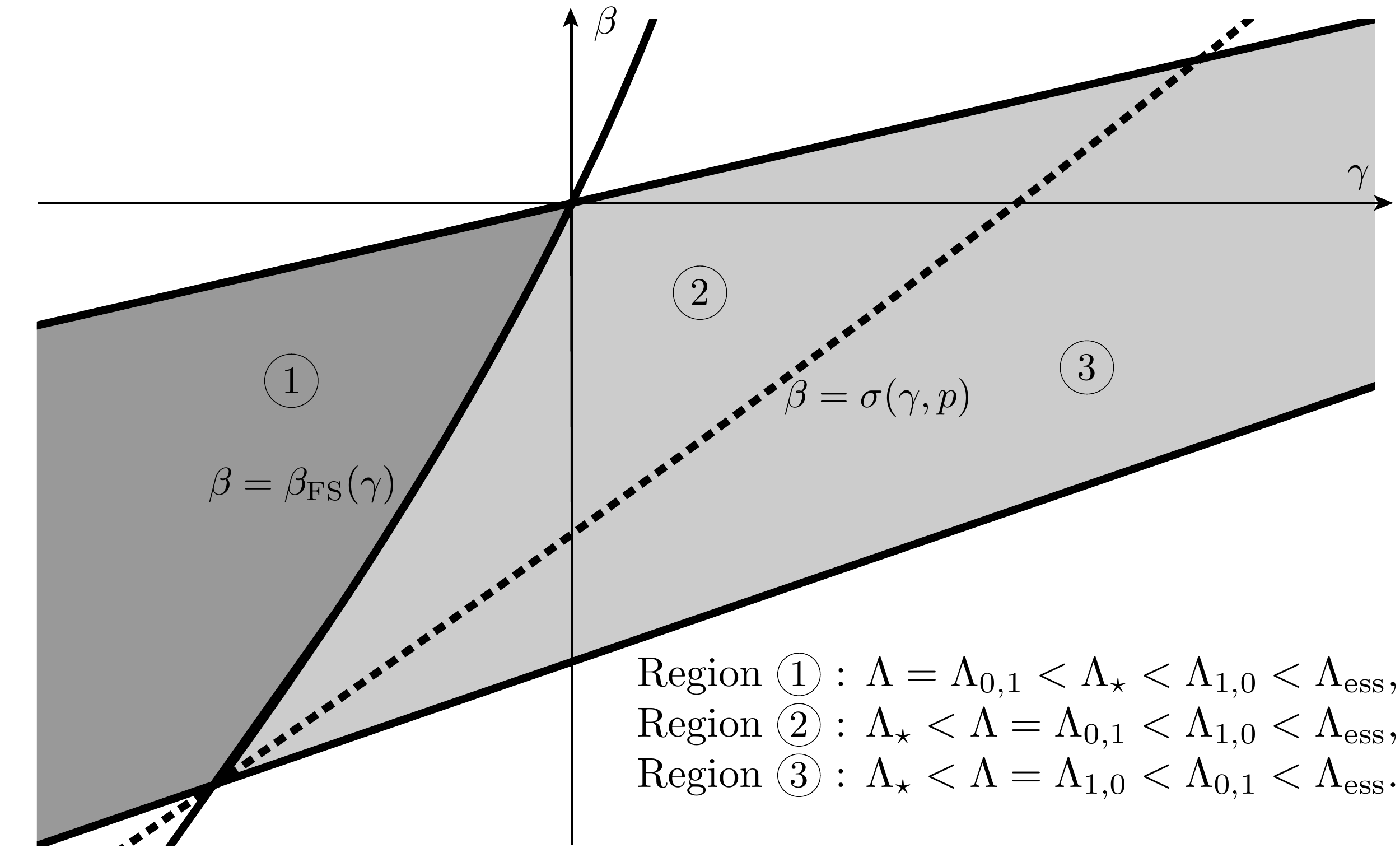}
\caption{\label{Fig:F4} In the dark grey region, symmetry breaking occurs. The plot is done for $p=2$ and $d=3$. See Appendix~\ref{Sec:SpectrumAditional} for a more detailed description of the properties of the lowest eigenvalues.}
\end{center}
\end{figure}

\medskip In Region \circled{1}, we know from Section~\ref{Sec:CKNrange} that
\[
\C_{\beta,\gamma,p}^\star<\C_{\beta,\gamma,p}<\mathcal C_{a,b}^\vartheta
\]
with $\vartheta$ as in~\eqref{theta}. The value of $\mathcal C_{a,b}$ is not explicitly known because symmetry breaking occurs for the corresponding values of $a=\beta/2$ and $b=\gamma/(2\,p_\star)$ according to~\cite{Felli-Schneider-03}, but at least $\mathcal C_{a,b}$ can be estimated in terms of $\mathcal C_{a,a}$ and $\mathcal C_{a,a+1}$, which are both explicitly known: see for instance~\cite{Catrina-Wang-01}.

Now let us turn our attention to the entropy -- entropy production inequality
\be{EP}
\mathcal K(M)\,\mathcal F[v]\le\mathcal I[v]\quad\forall\,v\in\L^{1,\gamma}(\R^d)\quad\mbox{such that}\quad \nrm v{1,\gamma}=M\,,
\ee
where $\mathcal K(M)$ is the best constant. The first question to decide is whether such an inequality makes sense for some $\mathcal K(M)>0$. If symmetry holds, for instance if $\beta=0$ and $\gamma>0$ is small according to~\cite{DMN2015}, we already know that the answer is yes and that $\mathcal K(M)\ge\tfrac{1-m}m\,(2+\beta-\gamma)^2$ because of~\eqref{Ineq:E-EP}. A more complete answer is given by the following result
\begin{proposition}\label{Prop:E-EP} With the notations of Theorem~\ref{Thm:SymmetryBreaking} and under the symmetry breaking assumption
\[
\gamma<0\quad\mbox{and}\quad\beta_{\rm FS}(\gamma)<\beta<\frac{d-2}d\,\gamma\,,
\]
for any $M>0$, we have $0<\mathcal K(M)\le\frac2m\,(1-m)^2\,\Lambda_{0,1}$. On the other hand, under the condition
\[
0<\gamma\le d\,,\quad\mbox{or}\quad\gamma\le0\quad\mbox{and}\quad\gamma-2<\beta<\beta_{\rm FS}(\gamma)\,,
\]
we have $\mathcal K(M)>\tfrac{1-m}m\,(2+\beta-\gamma)^2$.\end{proposition}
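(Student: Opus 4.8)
The plan is to prove the two halves of Proposition~\ref{Prop:E-EP} separately, both by reducing to the linearized (Hardy-Poincar\'e) picture developed in Sections~\ref{Sec:WFD} and~\ref{Sec:SpectrumResults}, and using that the best constant $\mathcal K(M)$ in~\eqref{EP} is independent of $M$ up to the normalization $C_M=1$, so it suffices to argue for the Barenblatt profile $\mathfrak B_{\beta,\gamma}$ with mass $M_\star$. For the first (symmetry breaking) half, I would test the quotient $\mathcal I[v]/\mathcal F[v]$ along the curve $v_\eps=\mathfrak B_{\beta,\gamma}\,(1+\eps\,f_{0,1}\,\mathfrak B_{\beta,\gamma}^{1-m})$, where $f_{0,1}(s)=s^\eta$ is the non-radial eigenfunction from Lemma~\ref{Lem:SpectrumResults}(iv) transported back through the change of variables $s=r^\alpha$ of Section~\ref{Sec:ChangeOfVariables}; one must of course project onto the mean-zero subspace so that $\mathcal F$ does not vanish at leading order, which is automatic for the non-radial mode $\kell=1$. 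By the expansions recorded in Section~\ref{Sec:LinearizationFlow}, $\eps^{-2}\mathcal F[v_\eps]\to\tfrac m{2\alpha}\,\mathsf F[f_{0,1}]$ and $\eps^{-2}\mathcal I[v_\eps]\to\tfrac{(1-m)^2}\alpha\,\mathsf I[f_{0,1}]=\tfrac{(1-m)^2}\alpha\,\Lambda_{0,1}\,\mathsf F[f_{0,1}]$, so passing to the limit in~\eqref{EP} forces $\mathcal K(M)\le\tfrac m{2\alpha}^{-1}\,\tfrac{(1-m)^2}\alpha\,\Lambda_{0,1}=\tfrac2m\,(1-m)^2\,\Lambda_{0,1}$. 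Positivity $\mathcal K(M)>0$ follows because $\mathfrak B_{\beta,\gamma}$ is (up to scaling) the unique minimizer of $\mathcal H[v]\ge\inf\mathcal H$, which is strict away from the minimizing manifold; more concretely, one invokes the compactness/existence argument behind Proposition~\ref{Prop:Existence} together with the strict positivity of the linearized form restricted to perturbations transverse to the one-parameter scaling family, so that no sequence $v_n$ with fixed mass can drive $\mathcal I[v_n]/\mathcal F[v_n]$ to zero.

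For the second half, under $0<\gamma\le d$ or $\gamma<0$ with $\gamma-2<\beta<\beta_{\rm FS}(\gamma)$, symmetry holds in~\eqref{CKN} by the result of~\cite{DELM2015}, hence the entropy--entropy production inequality~\eqref{Ineq:E-EP} holds with constant $\tfrac{1-m}m\,(2+\beta-\gamma)^2$, giving $\mathcal K(M)\ge\tfrac{1-m}m\,(2+\beta-\gamma)^2$. To upgrade this to a \emph{strict} inequality I would argue that the constant in~\eqref{Ineq:E-EP} is not optimal: by the analysis of Section~\ref{Sec:LinearizationFlow}, $\Lambda_\star=\tfrac{(2+\beta-\gamma)^2}{2(1-m)}=2\alpha^2\delta$ is the proportionality constant coming from~\eqref{CKN}, whereas the optimal constant in the linearized inequality $\mathsf I[f]\ge\Lambda\,\mathsf F[f]$ is $\Lambda=\min\{\Lambda_{1,0},\Lambda_{0,1}\}$ from Proposition~\ref{Prop:SpectralGap}, and in the present parameter region one has $\Lambda>\Lambda_\star$: indeed Corollary~\ref{Cor:SB} gives $\Lambda_{0,1}>\Lambda_\star\iff\eta>1\iff\alpha^2<\tfrac{d-1}{n-1}$, which is exactly the complement of~\eqref{FS}, and $\Lambda_{1,0}-\Lambda_\star=2\alpha^2(\delta-n)\ge0$ under~\eqref{parameters}. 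Therefore the linearized entropy--entropy production inequality holds with a constant $\tfrac2m\,(1-m)^2\,\Lambda$ strictly larger than $\tfrac2m\,(1-m)^2\,\Lambda_\star=\tfrac{1-m}m\,(2+\beta-\gamma)^2$; since the best constant $\mathcal K(M)$ in the full nonlinear inequality is bounded above by the linearized one (test with $\mathfrak B_{\beta,\gamma}+\eps(\cdot)$ and let $\eps\to0$ as above) but also the nonlinear inequality holds with the strictly larger constant in a neighborhood of $\mathfrak B_{\beta,\gamma}$, a standard ``linearization improves the constant locally, compactness propagates it globally'' argument — exactly the scheme of~\cite{MR1940370,1004} — yields $\mathcal K(M)>\tfrac{1-m}m\,(2+\beta-\gamma)^2$.

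The main obstacle is the global-to-local step in the second half: knowing that the linearized inequality has a better constant only gives the improvement for perturbations of $\mathfrak B_{\beta,\gamma}$, and promoting this to a strict improvement of $\mathcal K(M)$ over all admissible $v$ of mass $M$ requires ruling out that a minimizing sequence for $\mathcal I/\mathcal F$ escapes to infinity, concentrates, or splits — i.e.\ a concentration–compactness analysis in the weighted setting, or else a direct comparison with the Hardy-type inequality at $\beta=\gamma-2$ to control the behaviour at the origin and at infinity. A cleaner route, which I would actually follow, is to avoid computing $\mathcal K(M)$ exactly and instead only claim the two one-sided bounds stated: the upper bound $\mathcal K(M)\le\tfrac2m(1-m)^2\Lambda_{0,1}$ is pure linearization (no compactness needed), and the strict lower bound $\mathcal K(M)>\tfrac{1-m}m(2+\beta-\gamma)^2$ can be obtained by the argument of~\cite{DMN2015} (which establishes a strict entropy–entropy production inequality whenever symmetry holds and the linearized constant is strictly better), invoked as a black box together with the symmetry result of~\cite{DELM2015}; this keeps the proof short and self-contained modulo citations.
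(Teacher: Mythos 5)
Your upper bound $\mathcal K(M)\le\frac2m\,(1-m)^2\,\Lambda_{0,1}$ via the non-radial test perturbation is correct and is exactly what the paper uses implicitly. However, your argument for \emph{positivity} $\mathcal K(M)>0$ in the symmetry-breaking region contains a genuine error: you assert that $\mathfrak B_{\beta,\gamma}$ is (up to scaling) the unique minimizer of $\mathcal H$ and that the linearized form is strictly positive transverse to the scaling family. Both claims are \emph{false} precisely in region \circled{1}: there $\inf\mathcal H<0=\mathcal H[v_\star]$, so $v_\star$ is not a minimizer, and by Corollary~\ref{Cor:SB} and Section~\ref{Sec:SBVar} the quadratic form $\mathcal Q$ takes negative values on a test function $g_{0,1}$ that lives in the $\kell=1$ sector, hence is orthogonal to the scaling direction. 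So positivity of the linearization around the Barenblatt cannot be the reason $\mathcal K(M)>0$. The paper's actual mechanism is different and does not rely on any spectral positivity around $v_\star$: once one knows $\mathcal K(M)<\frac2m\,(1-m)^2\,\Lambda_\star=\tfrac{1-m}m\,(2+\beta-\gamma)^2$, the identity of Proposition~\ref{Prop:EquivEP-NSII} shows that a minimizing sequence satisfies $\mathcal I[v_n]-\frac2m\,(1-m)^2(\Lambda_\star-\varepsilon)\,\mathcal F[v_n]\le0$, and expanding this in terms of $\mathcal H$ plus a small multiple of $\mathcal F$ yields uniform bounds on $\nrm{\nabla v_n^{m-1/2}}{2,\beta}$, $\nrm{v_n^m}{1,\gamma}$ and the moment $\iRd{|x|^{2+\beta-2\gamma}v_n}$. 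This coercivity is what lets one pass to the limit and conclude $\mathcal K(M)>0$ is attained; the argument is one of compactness from coercivity, not local stability of $v_\star$.

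For the second half, your overall strategy (symmetry $\Rightarrow$ nonstrict bound; strict spectral gap $\Lambda>\Lambda_\star$; improved inequality forces strictness) is directionally correct and matches the paper, but two points need fixing. First, you explicitly flag the global-to-local compactness step as unresolved and propose either a concentration--compactness analysis or invoking~\cite{DMN2015} as a black box. The paper instead resolves it by invoking the stability/improved-inequality result of~\cite{dolbeault:hal-01081098} (Dolbeault--Toscani), not~\cite{DMN2015}: the improved inequality forces any minimizing sequence with $\mathcal I[v_n]/\mathcal F[v_n]\to\tfrac{1-m}m\,(2+\beta-\gamma)^2$ to converge (up to scaling) to $v_\star$, and then the strict inequality $\Lambda>\Lambda_\star$ contradicts $\mathcal K(M)=\frac2m\,(1-m)^2\,\Lambda_\star$ by linearizing at $v_\star$. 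Your reference~\cite{DMN2015} is misattributed here: that paper establishes existence and symmetry for~\eqref{CKN} with $\beta=0$ and small $\gamma>0$, not a general strict entropy--entropy production improvement. Second, even with the right reference, your proposal does not actually carry out the contradiction step; the paper's proof closes the argument by evaluating the limiting Rayleigh quotient at $v_\star$ and comparing to $\Lambda$. In short: the upper bound is fine, the positivity argument is wrong in substance and needs to be replaced by the coercivity argument, and the strict lower bound needs both the correct citation and the explicit contradiction via the spectral gap.
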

\begin{proof} Let us consider a minimizing sequence $(v_n)_{n\in\N}$ of $v\mapsto\mathcal I[v]/\mathcal F[v]$, such that $\nrm{v_n}{1,\gamma}=M$ and $\mathcal F[v_n]>0$ for any $n\in\N$. In region \circled{1}, we know that $\lim_{n\to\infty}\mathcal I[v_n]/\mathcal F[v_n]=\mathcal K(M)\le\frac2m\,(1-m)^2\,\Lambda_{0,1}<\frac2m\,(1-m)^2\,\Lambda_\star$. Hence we deduce from the fact that
\[
\mathcal I[v_n]-\frac2m\,(1-m)^2\(\Lambda_\star-\varepsilon\)\mathcal F[v_n]\le0
\]
for any $n$ large enough and for some $\varepsilon>0$ small enough that $\nrm{\nabla v_n^{m-1/2}}{2,\beta}$, $\nrm{v_n^m}{2,\gamma}$ and $\iRd{|x|^{2+\beta-2\gamma}\,v_n}$ are all bounded uniformly in $n$. We may pass to the limit as $n\to\infty$ and get that $\mathcal K(M)>0$ is achieved by some function $v\not\equiv v_\star$ with $\nrm v{1,\gamma}=M$.

By arguing as in~\cite{dolbeault:hal-01081098} in regions \circled{2} and \circled{3}, one can obtain an improved version of the inequality which shows that, if $\mathcal K(M)=\tfrac{1-m}m\,(2+\beta-\gamma)^2$, any minimizing sequence $(v_n)_{n\in\N}$ has to converge to $v_\star$, up to a scaling. But then we obtain $\mathcal K(M)=\lim_{n\to\infty}\mathcal I[v_n]/\mathcal F[v_n]\ge\frac2m\,(1-m)^2\,\Lambda>\frac2m\,(1-m)^2\,\Lambda_\star=\mathcal K(M)$, a contradiction.\end{proof}

{}In the symmetry range, that is, if either $0\le\gamma\le d$, or $\gamma<0$ and $\gamma-2<\beta\le\beta_{\rm FS}(\gamma)$, we know  that there is an entropy -- entropy production inequality~\eqref{EP} for some $\mathcal K(M)>0$. In that range, then $\mathcal K(M)=\tfrac{1-m}m\,(2+\beta-\gamma)^2$ if and only if $\beta=\beta_{\rm FS}(\gamma)$. In terms of the evolution equation~\eqref{FD}, we conclude that there is a global exponential rate of convergence of the free energy, \emph{i.e.},
\[
\mathcal F[v(t,\cdot)]\le\mathcal F[u_0]\,e^{-\,\frac m{1-m}\,\mathcal K(M)\,t}\quad\forall\,t\ge0\,,
\]
whenever~\eqref{EP} holds, but this global rate is given by $\mathcal K(M)=\frac{1-m}m\,(2+\beta-\gamma)^2$ if and only if $\beta=\beta_{\rm FS}$. Moreover, when $(\beta,\gamma)=(0,0)$, $\frac m{1-m}\,\mathcal K(M)=2\,(1-m)\,\Lambda_\star$ with $\Lambda_\star=\Lambda_{0,1}$, so that the global rate is the same as the asymptotic one obtained by linearization, and the corresponding eigenspace can be identified by considering the translations of the Barenblatt profiles. For further considerations on the case $(\beta,\gamma)\neq(0,0)$, see Appendix~\ref{Sec:SpectrumAditional}.

\medskip For all consequences for the nonlinear evolution equation~\eqref{FD} and their proofs, the reader is invited to refer to the second part of this work: see~\cite{BDMN2016b}. Altogether Proposition~\ref{Prop:E-EP} shows that a fast diffusion equation with weights like~\eqref{FD} has properties which definitely differ from similar equations without weights.

\bigskip\begin{center}\rule{4cm}{0.5pt}\end{center}\medskip\appendix

\section{Computation of the mass and of \texorpdfstring{$\C_{\beta,\gamma,p}^\star$}{the best radial constant}}\label{Appendix:Mass}

We shall denote by
\[
\sd:=|\S^{d-1}|=\frac{2\,\pi^{d/2}}{\Gamma(d/2)}
\]
the volume of the unit sphere $\S^{d-1}\subset\R^d$, for any integer $d\ge2$.

The mass of the Barenblatt stationary solution $\mathfrak B_{\beta,\gamma}(x)=\(C_M+|x|^{2+\beta-\gamma}\)^\frac1{m-1}$ is given by the identity
\[
M=\iRd{\(C_M+|x|^{2+\beta-\gamma}\)^\frac1{m-1}}=C_M^{\frac1{m-1}+\frac{d-\gamma}{2+\beta-\gamma}}\iRd{\(1+|x|^{2+\beta-\gamma}\)^\frac1{m-1}}
\]
and, using the change of variables $s=r^\alpha$, we obtain
\begin{multline*}
M_\star=\iRd{\(1+|x|^{2+\beta-\gamma}\)^\frac1{m-1}}=\sd\int_0^\infty\(1+r^{2+\beta-\gamma}\)^\frac1{m-1}\,r^{d-\gamma-1}\,dr\\
=\frac{\sd}\alpha\int_0^\infty\(1+s^2\)^\frac1{m-1}\,s^{n-1}\,ds=\frac{\sd}\alpha\,\frac{\Gamma(\frac n2)\,\Gamma(\frac1{1-m}-\frac n2)}{2\,\Gamma(\frac1{1-m})}
\end{multline*}
where $\alpha$ and $n$ are given in terms of $d$, $\beta$ and $\gamma$ by~\eqref{Eqn:alpha-n}. We recall that
\[
\int_0^\infty s^{2a-1}\,\(1+s^2\)^{-b}\,ds=\frac{\Gamma(a)\,\Gamma(b-a)}{2\,\Gamma(b)}\,.
\]
This allows us to compute $C_M$ in terms of $M/M_\star$:
\[
C_M=\(\frac M{M_\star}\)^\mu\quad\mbox{with}\quad\frac1\mu=\frac1{m-1}+\frac{d-\gamma}{2+\beta-\gamma}\,.
\]

With $\zeta$ given by~\eqref{zeta}, the constant $\C_{\beta,\gamma,p}^\star$ can be computed using the relation
\[
\C_{\beta,\gamma,p}^\star=\alpha^\zeta\,\mathsf K_{\alpha,n,p}^\star\quad\mbox{with}\quad\mathsf K_{\alpha,n,p}^\star:=\frac{\nrm{v_\star}{2p,d-n}}{\nrm{\D v_\star}{2,d-n}^\vartheta\,\nrm{v_\star}{p+1,d-n}^{1-\vartheta}}\,.
\]
as in Proposition~\ref{Prop:GNweigthed}. Using the computations of Section~\ref{Sec:VarCKN-LinearStab}, we obtain that
\[
\frac 1{\mathsf K_{\alpha,n,p}^\star}=\alpha^\vartheta\(\tfrac{4\,n}{p-1}\,\tfrac1{n+2-p\,(n-2)}\)^\frac\vartheta2\(\tfrac{2\,(p+1)}{n+2-p\,(n-2)}\)^\frac\vartheta{p+1}\(\sigma_d\,\tfrac{\Gamma(\frac n2)\,\Gamma(\frac{2p}{p-1}-\frac n2)}{2\,\Gamma(\frac{2p}{p-1})}\)^\zeta\,.
\]

\section{Some additional spectral properties}\label{Sec:SpectrumAditional}

This appendix collects some additional properties of the lowest eigenvalues and of the corresponding eigenfunctions. It completes the picture of Lemma~\ref{Lem:SpectrumResults}.

First of all, the function $f_{0,\kell}(s)=s^\eta$ solves~\eqref{EV} with $\kell=1$ if and only if
\[
\alpha^2\,\eta\,(\eta+n-2)=d-1\quad\mbox{and}\quad\Lambda_{0,\kell}=2\,\alpha^2\,\delta\,\eta\,.
\]
The unique positive solution $\eta$ is given by~\eqref{Eqn:eq-eta}. For later purpose, let us define $h(t):=t\,(t+n-2)-(d-1)/\alpha^2$ and observe that $h$ is increasing for $t>0$. The above equation for $\eta$ can be simply written as $h(\eta)=0$ and it has a unique positive solution.

We may also wonder if the function $f(s)=s^\eta$ with $\eta=(\beta+1)/\alpha$ is an eigenfunction, say $f_{\ellk,\kell}(s)$, for some $\ellk$, $\kell\in\N$, because we have $\frac d{dt}\int_{\R^d}x\,|x|^\beta\,u\,\frac{dx}{|x|^\gamma}=0$ if $u$ solves~\eqref{FD}. This moment conservation can be reinterpreted in terms of translations of the Barenblatt functions when $(\beta,\gamma)=(0,0)$ and, as observed in Section~\ref{Sec:Conclusion}, the corresponding invariance generates the eigenspace associated with $\lambda_{0,1}$. hence the question is to decide if something similar occurs when $(\beta,\gamma)\neq(0,0)$, although the presence of weights makes an interpretation in terms of invariances more delicate.

Solving~\eqref{EV} with $f(s)=s^\eta$ and $\eta=(\beta+1)/\alpha$ means that $\mu_\kell=\alpha^2\,\eta\,(\eta+n-2)=(\beta+1)\,(d-1)$, for some $\kell\in\N$. The unique solution corresponds to $k=0$ and is determined by
\[
\beta=\frac{\mu_\kell}{d-1}-1=\frac{\kell\,(\kell+d-2)}{d-1}-1=\frac{(\kell-1)\,(\kell+d-1)}{d-1}\,.
\]
Notice that we recover that only $\beta=0$ is eligible if $\kell=1$.

The pattern shown in Fig.~\ref{Fig:F4} is not generic, and three cases may occur: see Fig.~\ref{Fig:F5}. It depends on the choice of $\alpha$, $n$ and~$p$ as shown by the following elementary properties of the lowest eigenvalues:
\begin{figure}[ht]
\begin{center}
\includegraphics[width=4cm]{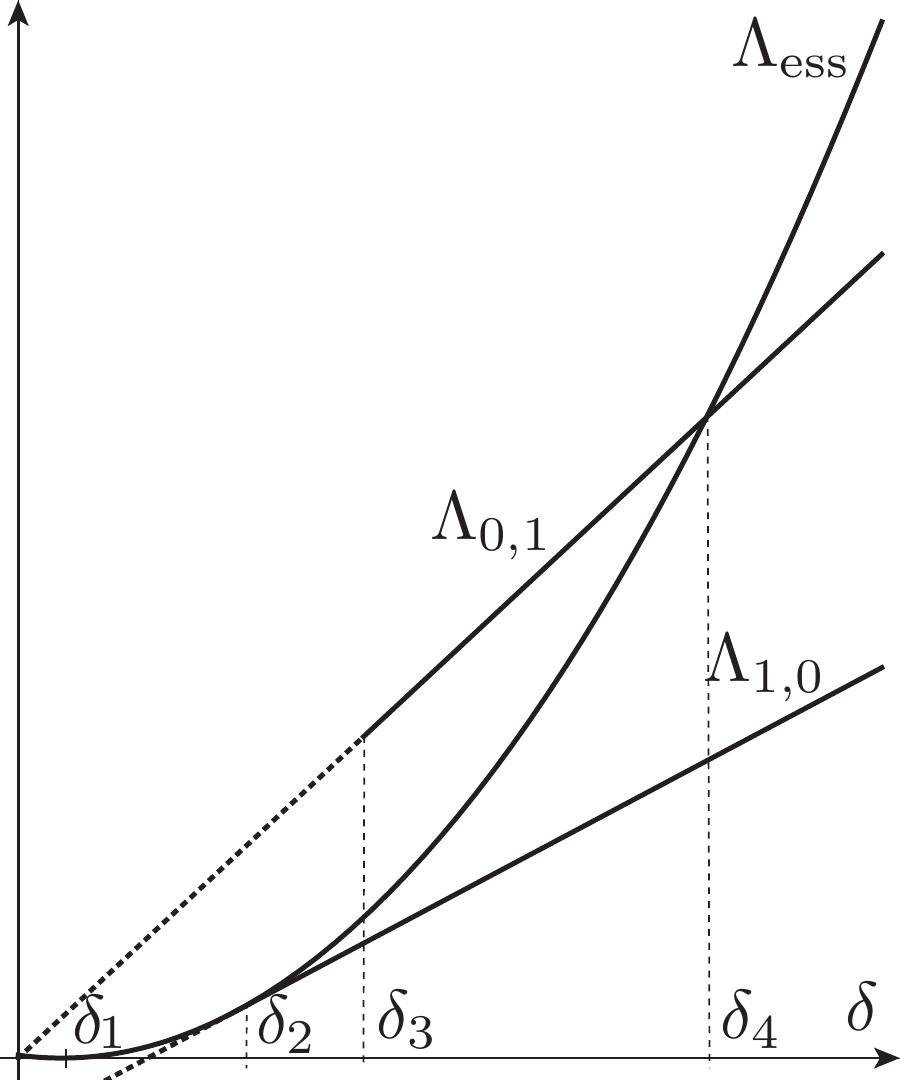}\hspace*{6pt}\includegraphics[width=4cm]{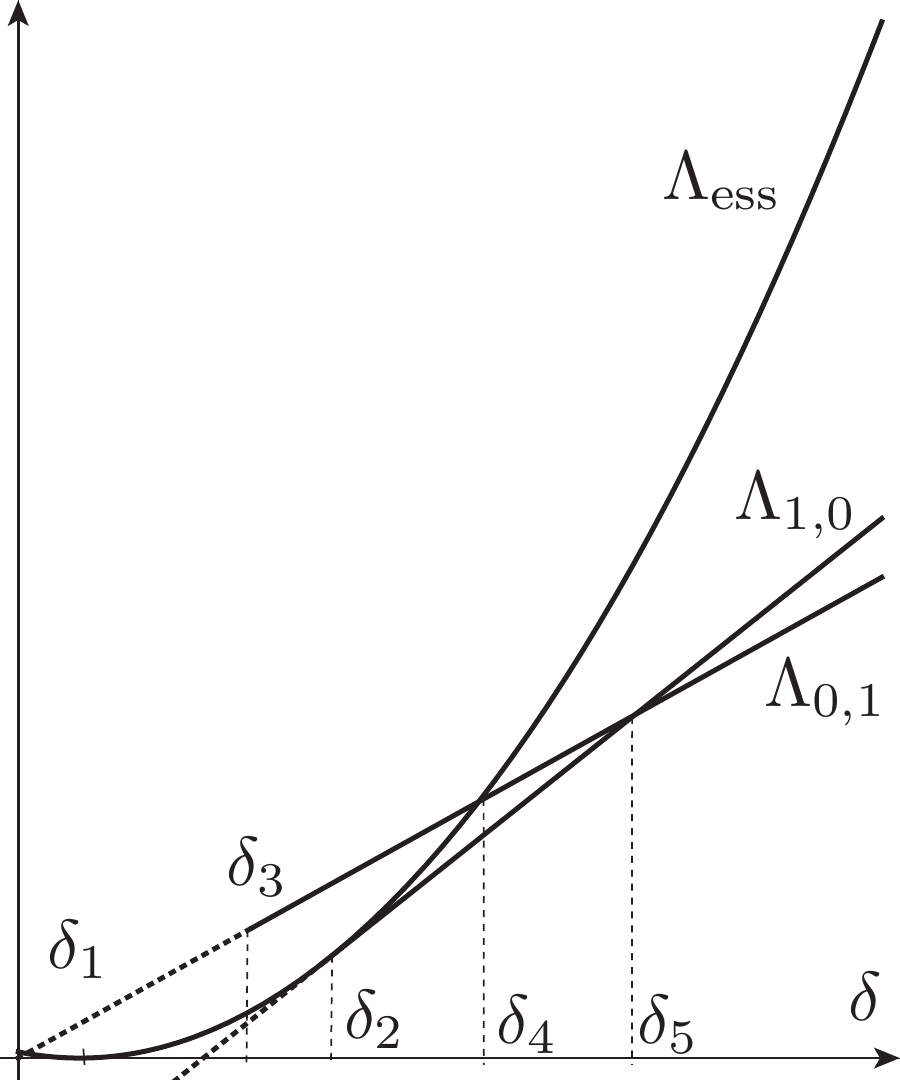}\hspace*{6pt}\includegraphics[width=4cm]{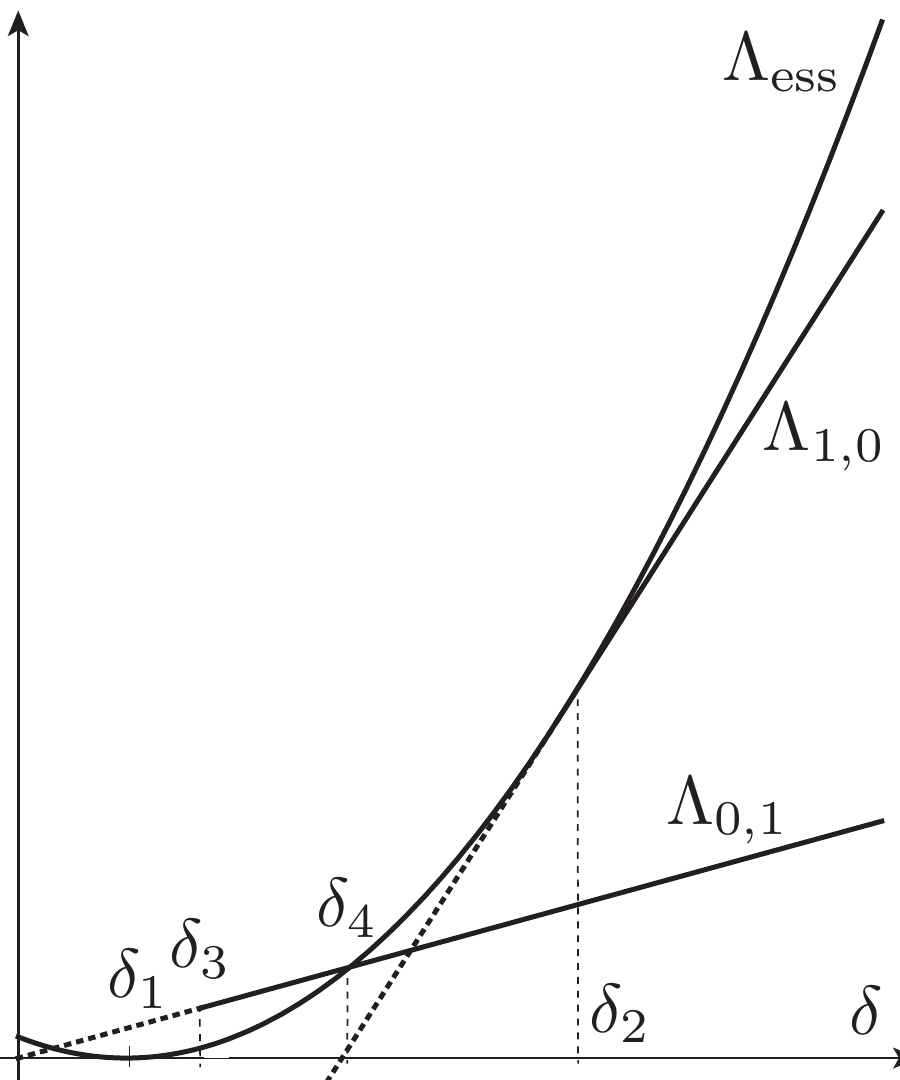}
\caption{\label{Fig:F5} The spectral gap and the lowest eigenvalues of $\mathcal L$ for $n=3$. The parabola represents $\Lambda_{\rm ess}$ as a function of $\delta$, $\Lambda_{1,0}$ is tangent to the parabola and $\Lambda_{0,1}$ is shown for $\eta=3.5$ (left), $\eta=1.4$ (center) and $\eta=0.35$ (right). The eigenvalues $\Lambda_{1,0}$ and $\Lambda_{0,1}$ are represented by a plain line only if the corresponding eigenvalues are in the space $\mathrm L^2(\R^d,\mathcal B^{2-m}\,|x|^{n-d}\,dx)$.}
\end{center}
\end{figure}
\begin{enumerate}
\item[(i)] $\Lambda_{\rm ess}=\tfrac14\,\alpha^2\,\(n-2-2\,\delta\)^2$ and, as a consequence, $\Lambda_{\rm ess}=0$ if and only if $\delta=\delta_1$ with
\[
\delta_1:=\frac{n-2}2\,.
\]
\item[(ii)] The eigenfunction $f_{1,0}(s)=s^2-\frac n{2\,\delta-n}$ associated with $\Lambda_{1,0}=2\,\alpha^2\,(2\,\delta-n)$ belongs to $\mathrm L^2(\R^d,\mathcal B^{2-m}\,|x|^{-\gamma}\,dx)$ if and only if $\delta>\delta_2$ with
\[
\delta_2:=\frac{n+2}2\,.
\]
\item[(iii)] The eigenfunction $f_{0,1}(s)=s^\eta$ associated with $\Lambda_{0,1}=\,2\,\alpha^2\,\delta\,\eta$ belongs to $\mathrm L^2(\R^d,\mathcal B^{2-m}\,|x|^{n-d}\,dx)$ if and only if $\delta>\delta_3$ with
\[
\delta_3:=\eta+\frac{n-2}2=\sqrt{\tfrac{d-1}\alpha+\(\tfrac{n-2}2\)^2}\,.
\]
It is clear that $\delta_3>\delta_1$. We also have $\delta_3<\delta_4$ where $\delta=\delta_4$ is determined by the condition $\Lambda_{0,1}=\Lambda_{\rm ess}$. After an elementary computation, we find indeed that
\[
\delta_4:=\eta+\frac{n-2}2+\frac{\sqrt{d-1}}\alpha\,.
\]
\item[(iv)] If $\eta\ge2$, \emph{i.e.}, if $h(2)\le0$, $\Lambda_{1,0}$ does not intersect with $\Lambda_{0,1}$ for any $\delta>0$. To observe an intersection of $\Lambda_{1,0}$ with $\Lambda_{0,1}$ in the range $\delta>\delta_1$, we need that
\[
\alpha>\alpha_1:=\sqrt{\frac{d-1}{2\,n}}\,.
\]
In other words, if $\alpha>\alpha_1$, the spectral gap is $\Lambda=\min\{\Lambda_{1,0},\Lambda_{\rm ess}\}$ and it is achieved among radial functions.

Now let us consider the case $\alpha>\alpha_1$. The intersection of $\Lambda_{1,0}$ with $\Lambda_{0,1}$ occurs for $\delta=\delta_5>\delta_2$ with 
\[
\delta_5:=\frac n{2-\eta}\,,
\]
if $\eta>\frac4{n+2}$, \emph{i.e.}, if $h(\frac4{n+2})<0$, which is equivalent to
\[
\alpha<\alpha_2:=\frac{n+2}{2\,n}\,\sqrt{d-1}\,.
\]
We observe that $\alpha_2>\alpha_1$ for any $n>0$. In the range $\alpha\in(\alpha_1,\alpha_2)$, by construction, we know that $\delta_2<\delta_4<\delta_5$, and the spectral gap is $\Lambda=\Lambda_{1,0}$ if $\delta\in(\delta_2,\delta_5]$ and $\Lambda=\Lambda_{0,1}$ for any $\delta\ge\delta_5$.

Finally, if $\alpha>\alpha_2$, we have $\delta_4<\delta_2$ and $\Lambda_{1,0}>\Lambda_{0,1}$ for any $\delta>\delta_2$. Moreover, in the range $\delta>\delta_4$, the spectral gap is $\Lambda=\Lambda_{0,1}$.

\item[(v)] Away from the symmetry breaking range, \emph{i.e.}, if $\alpha<\alpha_{\rm FS}$, we have $h(1)<0$, hence $\eta$, which is determined by $h(\eta)=0$, is larger than $1$.
\end{enumerate}

\section{Uniqueness of the radial optimal function}\label{Appendix:Uniqueness}

As a side result, we may observe that, up to a multiplication by a constant and a scaling, the optimal function $w_\star=\mathfrak B_{\beta,\gamma}^{m-1/2}$ is uniquely determined.
\begin{proposition}\label{Prop:Uniqueness} Assume that the parameters obey to~\eqref{parameters}. Then $w_\star$ is the unique radial optimal function for~\eqref{CKNrad} up to a multiplication by a constant and a scaling.\end{proposition}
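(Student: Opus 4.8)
The plan is to transfer the claim, via the change of variables of Section~\ref{Sec:ChangeOfVariables}, into a uniqueness statement for radial optimizers of the weighted Gagliardo--Nirenberg inequality~\eqref{CKN1}, and then to reduce that to a one-dimensional ODE analysis. Indeed, by the computations of Section~\ref{Sec:ChangeOfVariables}, a radial optimal function $w$ for~\eqref{CKNrad} corresponds, through $w(x)=v\big(|x|^{\alpha-1}x\big)$, to a radial optimal function $v$ for~\eqref{CKN1}, and the symmetries at our disposal --- the dilations $w\mapsto w(\lambda\,\cdot)$ and the multiplications $w\mapsto c\,w$ --- become exactly the invariances of the quotient in~\eqref{CKN1}. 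Hence it suffices to show that, up to a dilation and a multiplicative constant, the unique radial optimizer of~\eqref{CKN1} is $v_\star(x)=(1+|x|^2)^{1/(1-p)}$.

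First I would note that a radial optimizer may be taken positive: replacing $v$ by $|v|$ does not increase the quotient, and the strong maximum principle applied to the resulting Euler--Lagrange equation gives positivity. Writing that Euler--Lagrange equation and using the dilation $s\mapsto\mu\,s$ together with the multiplicative invariance to normalize its coefficients, any positive radial optimizer $f=f(s)$ solves, on $(0,\infty)$,
\[
-\,\frac{\alpha^2}{s^{n-1}}\,\frac{d}{ds}\!\left(s^{n-1}\,f'\right)+f^p=f^{2p-1}\,,\qquad f>0\,,\quad f'(0)=0\,,\quad\lim_{s\to\infty}f(s)=0\,,
\]
and $f$ lies in the natural weighted energy class: $f\in\L^{p+1,d-n}(\R^d)\cap\L^{2p,d-n}(\R^d)$ with $\mathsf D_\alpha f\in\L^{2,d-n}(\R^d)$. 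A further dilation absorbs $\alpha^2$ and turns this into $-\Delta_n f+f^p=f^{2p-1}$, where $\Delta_n g:=g''+\tfrac{n-1}{s}\,g'$; an elementary computation then shows that a suitable rescaling of $v_\star$ (a dilation composed with a multiplicative constant) solves this equation, and that the relevant exponent range is precisely the subcritical (and critical) one $1<p\le p_\star=\tfrac n{n-2}$, equivalently $\delta\ge n$.

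The remaining step, which I expect to be the main obstacle, is the uniqueness of positive solutions of this semilinear ODE in the weighted energy class, for the possibly non-integer parameter $n>d\ge2$. One can either invoke the known uniqueness theory for positive radial ground states of equations of the type $-\Delta u+u^p=u^{2p-1}$ --- Kwong-type results, which are purely ODE in nature and hold for real dimensions in the subcritical and critical ranges --- or carry out the argument directly, passing to an Emden--Fowler-type variable in which the equation is autonomous and combining a Pohozaev/energy identity with a shooting and unique-continuation analysis. Since a rescaling of $v_\star$ is admissible and is a solution, uniqueness forces $f$ to coincide with it, and undoing the change of variables shows that $w_\star=\mathfrak B_{\beta,\gamma}^{m-1/2}$ is the unique radial optimizer of~\eqref{CKNrad} up to a multiplicative constant and a dilation. (An alternative route, bypassing the ODE, would be to exploit the rigidity of the entropy--entropy production identity of Proposition~\ref{Prop:EquivEP-NSII} restricted to radial functions, together with the strict comparison between $\tfrac{1-m}m(2+\beta-\gamma)^2$ and the optimal radial entropy--entropy production constant in the spirit of Proposition~\ref{Prop:E-EP}; but that argument is less self-contained, so I would favour the ODE approach.)
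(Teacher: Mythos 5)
Your route is genuinely different from the one the paper takes. The paper's proof is very short and purely variational: it observes that a radial optimizer $w$ for~\eqref{CKNrad} makes $w^{2p}$ optimal in the entropy--entropy production inequality~\eqref{Ineq:E-EP}, and then invokes the improved version of that inequality from~\cite{dolbeault:hal-01081098}, which carries a remainder term bounded below by a multiple of $\mathcal F[w^{2p}]^2$; since the deficit vanishes at an optimizer, $\mathcal F[w^{2p}]=0$, and the strict convexity of the relative entropy forces $w^{2p}=\mathfrak B_{\beta,\gamma}$, i.e., $w=w_\star$. This is exactly the ``alternative route'' you mention and set aside in your last sentence. Each approach has something to recommend it: the paper's argument is soft, works directly in the weighted setting, and piggybacks on machinery already needed for Proposition~\ref{Prop:E-EP}, while your ODE reduction is more self-contained and makes the mechanism of uniqueness elementary and transparent.

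On your approach, there is one step that is stated too loosely. After the reductions, the Euler--Lagrange equation is indeed of the form $-\Delta_n f + f^p = f^{2p-1}$ with $\Delta_n f = f'' + \tfrac{n-1}{s}f'$, but this is \emph{not} a Kwong-type equation: Kwong's theorem concerns $-\Delta u + u = u^q$, with a \emph{linear} attractive term. Here the attractive term is $f^p$ with $p>1$, so the nonlinearity $g(f)=f^{2p-1}-f^p$ has a sign change at $f=1$ and one needs uniqueness theorems of Peletier--Serrin or Serrin--Tang type, which require verifying a monotonicity hypothesis on $f\,g'(f)/g(f)$ for $f>1$ (this does hold here, but it is a computation, not a citation of Kwong). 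One should also check explicitly that the purely ODE proofs of those theorems are insensitive to $n$ being non-integer, since $n>d$ is generically not an integer in this setting. Finally, the critical endpoint $p=p_\star$ needs separate treatment in your framework: there $\vartheta=1$, the $\L^{p+1,d-n}$ norm disappears from~\eqref{CKN1}, and the Euler--Lagrange equation degenerates to $-\Delta_n f = f^{\frac{n+2}{n-2}}$, for which uniqueness up to dilation of positive radial decaying solutions is a different (classical, scale-invariant) statement. None of these points is a fatal flaw, but as written the ``invoke Kwong'' step does not close the argument.
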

\begin{proof} Optimality of $w$ in~\eqref{CKNrad} also means optimality of $w^{2p}$ in~\eqref{Ineq:E-EP}. By arguing as in~\cite{dolbeault:hal-01081098}, we obtain an improved version of the inequality which shows the existence of a remainder term proportional to $\mathcal F[w^{2p}]^2$. Hence $\mathcal F[w^{2p}]=0$, which is possible if and only if $w=w_\star$.\end{proof}

\section*{Acknowledgments} This research has been partially supported by the projects \emph{STAB} (J.D., B.N.) and \emph{Kibord} (J.D.) of the French National Research Agency (ANR). M.B.~has been funded by Project MTM2011-24696 and MTM2014-52240-P (Spain). This work has begun while M.B.~and M.M.~were visiting J.D.~and B.N.~in 2014. M.B.~thanks the University of Paris 1 for inviting him. M.M.~has been partially funded by the National Research Project ``Calculus of Variations'' (PRIN 2010-11, Italy) and by the ``Universit\`a Italo-Francese / Universit\'e Franco-Italienne'' (Bando Vinci 2013). J.D.~also thanks the University of Pavia for support.

\smallskip\noindent {\sl\small\copyright~2016 by the authors. This paper may be reproduced, in its entirety, for non-commercial purposes.}
\providecommand{\href}[2]{#2}
\providecommand{\arxiv}[1]{\href{http://arxiv.org/abs/#1}{arXiv:#1}}
\providecommand{\url}[1]{\texttt{#1}}
\providecommand{\urlprefix}{URL }

\end{document}